\documentclass{amsart}
\usepackage[margin=1in]{geometry}
\usepackage{amsmath}
\usepackage{amsfonts}
\usepackage{amssymb}
\usepackage{amsthm}
\usepackage{stmaryrd}
\usepackage{graphicx}
\usepackage{multicol}
\usepackage{caption}
\usepackage{subcaption}
\usepackage{bm}
\usepackage{algorithm}
\usepackage{algpseudocode}

\usepackage{tikz, comment}

\usepackage{import}

\usepackage{hyperref}
\usepackage{cleveref}
\usepackage{enumitem}
\usepackage{empheq}

\newtheorem{theorem}{Theorem}

\newtheorem{lemma}[theorem]{Lemma}
\newtheorem{conj}[theorem]{Conjecture}

\newtheorem{cor}[theorem]{Corollary}
\newtheorem{proposition}[theorem]{Proposition}

\theoremstyle{definition}
\newtheorem{definition}[theorem]{Definition}
\newtheorem{remark}[theorem]{Remark}
\newtheorem{example}[theorem]{Example}

\numberwithin{theorem}{section}

\numberwithin{equation}{section}

\newcommand{\mb}[1]{\mathbf{#1}}

\newcommand{\Z}{\mathbb{Z}}
\newcommand{\C}{\mathbb{C}}

\newcommand{\ps}{\mathsf{ps}}
\newcommand{\st}{\mathsf{St}}

\renewcommand{\L}{\mathcal{L}}

\newcommand{\lan}{\langle}
\newcommand{\ran}{\rangle}

\newcommand{\tcr}[1]{\textcolor{red}{#1}}
\DeclareMathOperator{\mindeg}{mindeg}
\DeclareMathOperator{\maxdeg}{maxdeg}

\newcommand{\psgn}[2]{\ps_{#2}(Z_{#1}(\mathbf{x};t))}
\DeclareMathOperator{\PC}{\mathcal{PC}}

\DeclarePairedDelimiter{\floor}{\lfloor}{\rfloor}
\DeclarePairedDelimiter{\ab}{\lvert}{\rvert}

\DeclarePairedDelimiter{\ang}{\langle}{\rangle}

\newcommand{\computeClassOdd}[1]{%
  \gdef\coCountOdd{0}%
  \gdef\coCountEven{0}%
  \gdef\coSumEven{0}%
  \foreach \v [count=\i] in {#1} {%
    \ifodd\i
      \pgfmathtruncatemacro{\coCountOdd}{\coCountOdd+1}%
      \xdef\coCountOdd{\coCountOdd}%
    \else
      \pgfmathtruncatemacro{\coCountEven}{\coCountEven+1}%
      \xdef\coCountEven{\coCountEven}%
      \pgfmathtruncatemacro{\coSumEven}{\coSumEven+\v}%
      \xdef\coSumEven{\coSumEven}%
    \fi
  }%
  \pgfmathtruncatemacro{\classOdd}{\coCountOdd-\coCountEven+\coSumEven}%
  \xdef\classOdd{\classOdd}%
}

\newcommand{\normalizeTuple}[1]{%
  \foreach \v [count=\i] in {#1} {%
    \ifnum\i=1
      \xdef\ntFirst{\v}%
    \fi
    \xdef\ntLast{\v}%
  }%
  \pgfmathtruncatemacro{\ntNeedRev}{\ntFirst<\ntLast}%
  \ifnum\ntNeedRev=1
    \gdef\normList{}%
    \foreach \v in {#1} {%
      \ifx\normList\empty
        \xdef\normList{\v}%
      \else
        \xdef\normList{\v,\normList}%
      \fi
    }%
  \else
    \xdef\normList{#1}%
  \fi
}

\newcommand{\computePositions}[1]{%
  \gdef\ppPrevX{0}%
  \gdef\ppPrevL{0}%
  \gdef\ppFirst{1}%
  \foreach \v [count=\i] in {#1} {%
    \pgfmathtruncatemacro{\ppCurL}{\v-1}%
    \ifnum\ppFirst=1
      \pgfmathsetmacro{\ppCurX}{\dxBase}%
      \gdef\ppFirst{0}%
    \else
      \pgfmathsetmacro{\ppCurX}{\ppPrevX+\dxBase+\gapScale*(\ppPrevL+\ppCurL)}%
    \fi
    \expandafter\xdef\csname Xpos\i\endcsname{\ppCurX}%
    \xdef\ppPrevX{\ppCurX}%
    \xdef\ppPrevL{\ppCurL}%
  }%
}

\newcommand{\boldtuple}[1]{%
  \normalizeTuple{#1}%
  $($\def\sep{}%
  \foreach \v in \normList {%
    \sep
    \v
    \xdef\sep{,\,}%
  }$)$%
}

\newcommand{\caterpillar}[1]{%
  \normalizeTuple{#1}%
  \expandafter\computeClassOdd\expandafter{\normList}%
  \def\dxBase{0.75}
  \def\gapScale{0.12}
  \expandafter\computePositions\expandafter{\normList}%
  \begin{tikzpicture}[yscale=-1,scale=0.725, transform shape, baseline=-2.5pt,
      catnode/.style={circle,draw,inner sep=0pt,minimum size=5pt},
      every path/.style={line width=0.5pt}]
    \def\rad{0.75}  
    \foreach \v [count=\i] in \normList {%
      \coordinate (S\i) at (\csname Xpos\i\endcsname,0);
      \pgfmathtruncatemacro{\parityOdd}{mod(\i,2)}%
      \pgfmathtruncatemacro{\spineRed}{(\parityOdd==1)*(\classOdd==5)+(\parityOdd==0)*(\classOdd==3)}%
      \pgfmathtruncatemacro{\leafRed}{1-\spineRed}%
      \ifnum\spineRed=1
        \node[catnode,fill=black] (S\i) at (S\i) {};
      \else
        \node[catnode,fill=white] (S\i) at (S\i) {};
      \fi
      \ifnum\i>1
        \pgfmathtruncatemacro{\prev}{\i-1}
        \draw (S\prev) -- (S\i);
      \fi
      \pgfmathtruncatemacro{\L}{\v-1}
      \ifnum\L>0
        \foreach \j in {1,...,\L} {
          \ifnum\L=1
            \pgfmathsetmacro{\ang}{90}
          \else
            \pgfmathsetmacro{\ang}{60+(\j-1)*60/(\L-1)}
          \fi
          \pgfmathsetmacro{\offx}{\csname Xpos\i\endcsname+\rad*cos(\ang)}
          \pgfmathsetmacro{\offy}{\rad*sin(\ang)}
           \draw (S\i) -- (\offx,\offy);
          \ifnum\leafRed=1
            \node[catnode,fill=black] at (\offx,\offy) {};
          \else
            \node[catnode,fill=white] at (\offx,\offy) {};
          \fi

        }
      \fi
    }
  \end{tikzpicture}%
}

\newcommand{\plaincaterpillar}[1]{%
  \normalizeTuple{#1}%
  \expandafter\computeClassOdd\expandafter{\normList}%
  \def\dxBase{0.95}%
  \def\gapScale{0.12}%
  \expandafter\computePositions\expandafter{\normList}%
  \begin{tikzpicture}[yscale=-1,scale=0.725, transform shape, baseline=-2.5pt,
      catnode/.style={circle,draw,inner sep=0pt,minimum size=5pt},
      every path/.style={line width=0.5pt}]
    \def\rad{0.75}  
    \foreach \v [count=\i] in \normList {%
      \coordinate (S\i) at (\csname Xpos\i\endcsname,0);
      \node[catnode,fill=black] (S\i) at (S\i) {};
      \ifnum\i>1
        \pgfmathtruncatemacro{\prev}{\i-1}
        \draw (S\prev) -- (S\i);
      \fi
      \pgfmathtruncatemacro{\L}{\v-1}
      \ifnum\L>0
        \foreach \j in {1,...,\L} {
          \ifnum\L=1
            \pgfmathsetmacro{\ang}{90}
          \else
            \pgfmathsetmacro{\ang}{60+(\j-1)*60/(\L-1)}
          \fi
          \pgfmathsetmacro{\offx}{\csname Xpos\i\endcsname+\rad*cos(\ang)}
          \pgfmathsetmacro{\offy}{\rad*sin(\ang)}
           \draw (S\i) -- (\offx,\offy);
           \node[catnode,fill=black] at (\offx,\offy) {};
        }
      \fi
    }
  \end{tikzpicture}%
}

\usepackage[colorinlistoftodos]{todonotes}

\newcommand{\LINECOMMENT}[1]{\State \textit{\# #1}}

\renewcommand{\models}{\vDash} 

\title{Principal specializations of the {\MakeLowercase{t}}-chromatic symmetric function}

\author[A.~Bingham]{Aram Bingham}\address[A.~Bingham]{Departamento de Matem\'aticas, Universidad de Chile, 3425 Las Palmeras, Ñuñoa, Santiago, CL}
\email{\textcolor{blue}{\href{mailto:aram@matmor.unam.mx}{aram@matmor.unam.mx}}}

\author[K.~Celano]{Kyle Celano}\address[K.~Celano]{Department of Mathematics, Wake Forest University, NC}
\email{\textcolor{blue}{\url{celanok@wfu.edu}}}

\author[P.~Commins]{Patricia Commins}
\address[P.~Commins]{LACIM, D\'{e}partement de Math\'{e}matiques, Université du Québec \`a Montr\'{e}al, Canada}\email{\textcolor{blue}{\href{mailto:commins.patricia@uqam.ca}{commins.patricia@uqam.ca}}}

\author[R.~Orellana]{Rosa Orellana}
\address[R.~Orellana]{Mathematics Department, Dartmouth College,
Hanover, NH, USA}
\email{Rosa.C.Orellana@dartmouth.edu}
\urladdr{\href{https://math.dartmouth.edu/~orellana/}{https://math.dartmouth.edu/~orellana/}}

\author[J.~Pan]{Jianping Pan}
\address[J.~Pan]{School of Mathematical and Statistical Sciences, Arizona State University, Tempe, AZ, USA}
\email{\textcolor{blue}{\href{mailto:jianping.pan@asu.edu}{jianping.pan@asu.edu}}}

\author[M.C.~Simone]{Mary Claire Simone}
\address[M.C.~Simone]{Mathematics Department, Skagit Valley College, Mount Vernon, WA, USA}
\email{\textcolor{blue}{\href{mailto:MaryClaire.Simone@skagit.edu}{MaryClaire.Simone@skagit.edu}}}

\date{\today}

\keywords{chromatic symmetric function, principal specialization, caterpillar}
\subjclass{05E05, 05C60, 05C31, 05C05}

\begin{document}

\begin{abstract}
We introduce the $t$-chromatic symmetric function $Z_G(\mathbf{x};t)$, a variant of Stanley's chromatic symmetric function in which the variable $t$ records the number of distinct colors used in a proper coloring. Although $Z_G(\mathbf{x};t)$ is equivalent to the chromatic symmetric function as a graph invariant, its principal specializations retain substantially more information than those of the chromatic symmetric function. 
This construction is motivated by the work of Bajo--Beck--Vindas-Meléndez on the $q$-chromatic polynomial and the conjecture of Loehr--Warrington that the $n$th principal specialization of the chromatic symmetric function distinguishes trees on at most $n$ vertices, both of which stem from Stanley's longstanding question of whether the chromatic symmetric function distinguishes trees.
We determine the classes of caterpillars resulting from the second principal specialization  of $Z_G(\mathbf{x};t)$, showing that the second specialization already captures some of the structure of trees.
We then prove that the third principal specialization distinguishes and explicitly reconstructs caterpillars with three spine vertices as well as elongated bistars. Finally,  we conjecture, based on computational data, that the fourth principal specialization distinguishes all caterpillars.
\end{abstract}

\maketitle


\section{Introduction}
The chromatic symmetric function of a graph was introduced by Stanley in the mid-1990s as a symmetric-function analogue of the chromatic polynomial  \cite{stanley_1995}. Let $\mathbf{x}=(x_1, x_2, \ldots) $ be a sequence of commuting variables. For a simple graph $G=(V,E)$, the chromatic symmetric function of $G$ is
\[
X_G(\mathbf{x})=\sum_{\kappa\in\PC_G} \mathbf{x}^{\kappa},
\]
where $\PC_G$ denotes the set of proper colorings of the vertices of $G$ with colors from $\mathbb{N}$, and $\mathbf{x}^\kappa = \prod_{v\in V} x_{\kappa(v)}$. Although the chromatic polynomial, $\chi_G(k)$, records only the number of proper colorings using a set of $k$ colors, the chromatic symmetric function retains information about how the colors are distributed among the vertices. In particular, two graphs with the same chromatic polynomial can have different chromatic symmetric functions.

Since its introduction, several generalizations and specializations of the chromatic symmetric function have been developed, in part motivated by the question of how much information about a graph is encoded by its proper colorings. Gebhard and Sagan introduced a chromatic symmetric function in noncommuting variables, $\mathbf{Y}_G(\mathbf{y})$,  that retains information about which vertices receive which colors and satisfies a deletion-contraction relation, providing a refinement of Stanley's invariant \cite{GebhardSagan2001}. Aliste-Prieto, de Mier, and Zamora \cite{APdMZ2021} studied the $U$-polynomial of Noble and Welsh \cite{noble_weighted_1999}, which is equivalent to the chromatic symmetric function as an invariant of trees.   Loehr and Warrington introduced a rooted version of the chromatic symmetric function \cite{LoehrWarrington2024}. Together, these developments highlight the broader question of how much information about a graph can be recovered from its proper colorings and how refinements of the chromatic symmetric function can improve its ability to distinguish non-isomorphic graphs.

One of the central questions surrounding the chromatic symmetric function is whether it distinguishes trees. Despite the fact that all trees on the same number of vertices have the same chromatic polynomial, Stanley was unable to find two non-isomorphic trees with the same chromatic symmetric function \cite[Problem 7.47(b)]{stanley1999enumerative}. This led to what is now known as the Tree Isomorphism Conjecture: if $T$ and $T'$ are non-isomorphic trees, then
\[
X_T(\mathbf{x})\neq X_{T'}(\mathbf{x}).
\]
The conjecture has been verified for all trees with at most 29 vertices by Heil and Ji \cite{HeilJi18}, and several important families of trees are known to be distinguished by their chromatic symmetric functions \cite{AWvW24,ALISTEPRIETOZAMORA, Crew2022, arunkumar2023properqcaterpillarsdistinguishedchromatic, gonzalez2024chromatic, HurynChmutov20, LoeblSereni, martin2007distinguishingtreeschromaticsymmetric,WYZ2024, zeng2026multiplicity}. Nevertheless, the general conjecture remains open.

A natural approach to Stanley's question is to simplify the chromatic symmetric function while retaining enough information to distinguish trees. One such simplification is obtained through principal specialization. For a symmetric function $f(x_1, x_2, \ldots)$, its $n$th {\it principal specialization} is
\[
\ps_n(f)=f(1,q,q^2,\ldots,q^{n-1},0,0,\ldots).
\]
For the chromatic symmetric function, the coefficient of $q^i$ in $\ps_n(X_G)$ counts proper $n$-colorings of $G$ using colors $\{0, 1, 2, \ldots, n-1\}$ having a total chromatic sum of $i$. Thus, principal specialization turns a symmetric function in infinitely many variables into a univariate polynomial encoding a weighted enumeration of proper colorings.

Principal specializations have been studied in connection with the Tree Isomorphism Conjecture. Loehr and Warrington conjectured that $\ps_n(X_T)$ distinguishes trees with at most $n$ vertices \cite{LoehrWarrington2024}, and verified the conjecture for trees with at most 17 vertices. More recently, Bajo, Beck, and Vindas-Meléndez introduced the {\it $q$-chromatic polynomial} and showed that its leading coefficient is equivalent to the stable principal specialization of the chromatic symmetric function \cite{BBV}. They conjectured that this stable principal specialization distinguishes trees. These developments suggest that principal specialization may provide a useful way to extract structural information from the chromatic symmetric function and a way to approach the Tree Isomorphism Conjecture.

In \cite{shareshian_chromatic_2016}, Shareshian and Wachs introduced a $t$-analogue of the chromatic symmetric function. This resulted in a quasisymmetric function that was useful to address other conjectures related to the chromatic symmetric function. In this paper, we introduce a new $t$-analogue of the chromatic symmetric function
\[Z_G(\mathbf{x};t)=\sum_{\kappa\in \PC_G}t^{\ell(\kappa)}\mathbf{x}^\kappa,\]
where $\ell(\kappa)$ is the number of distinct colors used by $\kappa$. Our $t$-analogue is symmetric and we believe it is better suited to help with the Tree Isomorphism Conjecture. 

We focus on principal specializations of this function, 
\[
\ps_n(Z_G(\mathbf{x}; t)):= Z_G(1,q,\ldots,q^{n-1},0,\ldots;t),
\]
which is a bivariate polynomial. The coefficient of $q^i t^j$ in this polynomial counts proper $n$-colorings of $G$ using colors $\{0, 1, 2, \ldots, n-1\}$ having a chromatic sum of $i$ that use exactly $j$ distinct colors. 
Thus, unlike the principal specialization of $X_G$, the specialization of $Z_G$ retains information about the number of colors used by a coloring. In particular, although $Z_G$ and $X_G$ are equivalent as graph invariants, their principal specializations need not be equivalent. Indeed, there are graphs $G$ and $H$ for which
$\ps_n(X_G)=\ps_n(X_H)$
but
$\ps_n(Z_G)\neq\ps_n(Z_H)$ (see Examples \ref{ex:345 354} and \ref{ex:same-PSX--different-PSZ}).

The additional information contained in $\ps_n(Z_T(\mathbf{x};t))$ is particularly useful for recovering structural properties of trees. In Figure \ref{intro}, we show how our new function and its specializations relate to objects in the literature. 
\begin{figure}[htbp]
    \begin{center}
        \begin{tikzpicture}[xscale=3,yscale=2,scale=.75]
        \tikzset{every node/.style={draw}}
            \node (Xt) at (0,0) {$X_G(\mathbf{x})$};
            \node (psnXt) at (1,-2+1) {$\ps_{n}(X_G(\mathbf{x}))$};
            \node (psXt) at (-1,-2+1) {$\ps(X_G(\mathbf{x}))$};
            \draw[->] (Xt)-- (psXt);
            \draw[->] (Xt)-- (psnXt);
            \draw[dashed,->,transform canvas={yshift=3pt}] (psXt)--(psnXt);
            \draw[dashed,->,transform canvas={yshift=-3pt}] (psnXt)--(psXt);
            \node (Yt) at (0,1) {$Y_G(\mathbf{y})$};
            \node (chiT) at (0,-3+1) {$\chi_G(k)$};
            \draw[->] (Yt) -- (Xt) ;
            \draw[->] (Xt) -- (chiT) ;
            \node (Zt) at (2.5,0) {\tcr{$Z_G(\mathbf{x};t)$}};
            \node (psnZt) at (2.5,-2+1) {\tcr{$\ps_n(Z_G(\mathbf{x};t))$}};
            \draw[->,transform canvas={yshift=3pt}] (Xt)-- node[draw=none,above] {$\mb{x}^{\alpha}\mapsto t^{\ell(\alpha)}\mb{x}^{\alpha}$} (Zt);
             \draw[->,transform canvas={yshift=-3pt}]  (Zt)--node[draw=none,below] {$1\mapsfrom t$} (Xt);
            \draw[->] (Zt)--(psnZt);
            \draw[->,transform canvas={yshift=3pt}] (psnZt)--node[above,draw=none] {$1\mapsfrom t$}(psnXt);
            \draw[->,transform canvas={yshift=-3pt},dashed] (psnXt)-- (psnZt);
        \end{tikzpicture}
    \end{center}
\caption{A diagram situating various invariants mentioned in this paper. The solid arrows indicate that the target can be obtained from the source (usually via specialization). The dashed arrows indicate that there is a pair of graphs with the same source invariant, but different target invariants; see Examples \ref{ex:345 354} and \ref{ex:same-PSX--different-PSZ}}
            \label{intro}
\end{figure}

Our primary focus is on caterpillars. A \textit{caterpillar} is a tree obtained by attaching leaves to the vertices of a path, called the \textit{spine} of the caterpillar. A caterpillar can naturally be represented by a composition 
$\alpha=(\alpha_1,\ldots,\alpha_\ell)$,
where $\ell$ is the number of vertices in the spine, $\alpha_1, \alpha_\ell\geq 2$, and $\alpha_i-1$ is the number of leaves attached to the $i$th spine vertex. Two such caterpillars are isomorphic precisely when their corresponding compositions are equal up to reversal. A \textit{bistar} is a caterpillar with $\ell =2$ and an \textit{elongated bistar} is a caterpillar with corresponding composition $(a,1^r,b)$ such that $a\geq b\geq 2$ and $\ell \geq 0$.

Our first results describe the information about caterpillars that is already visible in the second principal specialization. In \Cref{prop:basic-properties-psn(Z_G)}, we show that for a connected bipartite graph, the coefficient of $t^2$ in $\ps_n(Z_G(\mathbf{x};t))$ records the unordered pair of bipartition sizes. Consequently, $\ps_2(Z_G(\mathbf{x};t))$ allows us to recover the sizes of the two bipartition classes of a caterpillar. We use this observation to characterize the $\ps_2$-classes of caterpillars in \Cref{prop:ps2-classes-caterpillars}. In particular, we describe all caterpillars having a prescribed second principal specialization and determine the elongated bistars (\Cref{cor:ps2-class-el-bistars}) and three-spine-vertex caterpillars (\Cref{cor:ps2-class-3cat}) that occur in these classes.

The second principal specialization does not, however, distinguish caterpillars in general. This leads us to consider the third principal specialization. Our main results show that $\ps_3(Z_G(\mathbf{x};t))$ contains enough information to reconstruct two infinite families of caterpillars. We give two explicit deterministic algorithms, \Cref{alg:elongated-bistars} and \Cref{alg:recover-abc}, that recover elongated bistars
$(a,1^r,b)$ and caterpillars with three spine vertices indexed by the composition $(a,b,c)$, respectively, from $\ps_3(Z_G(\mathbf{x};t))$.
These results suggest that principal specializations of the $t$-chromatic symmetric function $Z_G(\mathbf{x};t)$ at relatively small $n$ can capture substantial structural information about trees.
SageMath~\cite{sagemath} implementations of our three reconstruction Algorithms are available in the accompanying repository.\footnote{\url{https://github.com/jianping-pan/t-chromatic-caterpillars.git}}

The third principal specialization of $Z_G$ does not distinguish all caterpillars; for example, the pair of compositions (2,1,3,3) and (3,1,2,3) index caterpillars with the same third principal specialization of $Z_G$. Nevertheless, we have found no pair of nonisomorphic trees with the same fourth principal specialization among trees with at most 25 vertices. This motivates our conjecture that
$\ps_4(Z_T(\mathbf{x};t))$
distinguishes all trees. 

The remainder of the paper is organized as follows. In \Cref{sec:background}, we review the necessary background on compositions, graphs, colorings, symmetric functions, and the chromatic symmetric function. In \Cref{sec:t-chromatic-symmetric-function}, we introduce the $t$-chromatic symmetric function $Z_G(\mathbf{x};t)$ and its principal specializations, establish their basic properties, and characterize the second principal specialization classes of caterpillars. In \Cref{sec:bistars}, we give an explicit reconstruction algorithm for elongated bistars from their third principal specializations. In \Cref{sec:3-cat}, we prove the corresponding reconstruction theorem for caterpillars with three spine vertices. Finally, in \Cref{sec:future-directions}, we discuss limitations of the third principal specialization and formulate directions for future work.

\section{Background and preliminaries}\label{sec:background}

We state some basic notation used throughout the work, much of which is consistent with \cite{stanleyvol1enumerative,stanley1999enumerative,West96}. For a positive integer $n$, let $[n]:=\{1,\dots,n\}$ and $[0] := \varnothing$.

\subsection{Compositions and partitions}
Let $n$ be a positive integer. A \textbf{composition} of size $n$ and length $\ell$ is a sequence $\alpha=(\alpha_1,\dots,\alpha_\ell)$ of positive integers such that $\alpha_1+\cdots+\alpha_\ell=n$. We write $\alpha\vDash n$ if $\alpha$ is a composition of size $n$ (of any length) and we use $\ell(\alpha)$ to denote the length of $\alpha$. A \textbf{partition} of length $\ell$ is a composition $\lambda=(\lambda_1,\dots,\lambda_\ell)$ such that $\lambda_1\geq \lambda_2\geq \cdots \geq \lambda_\ell.$ We write $\lambda\vdash n$ if $\lambda$ is a partition of size $n$ (of any length). We sometimes use {multiplicity notation} for repeated parts, meaning we write $(\ldots, a^k, \ldots) $ in place of $(\ldots, \underbrace{a, a, \ldots, a}_{k \text{ times}}, \ldots)$. For example, the compositions $(5, 1^3, 2)$ and $ (5, 1, 1, 1, 2)$ are the same.

\subsection{Graphs and colorings}
A \textbf{graph} $G$ is a pair $(V,E)$ consisting of a finite set $V=V(G)$, the \textbf{vertex set}, and a finite set $E=E(G)$, the \textbf{edge set}, consisting of 2-element subsets of $V$. All graphs in this paper are \textit{simple}, meaning they contain no loops or multiple edges. We use standard language in graph theory; see for example~\cite{West96}. We will be working with a class of trees called caterpillars, formed by attaching leaves to a path graph. 
Formally, we define caterpillars as follows.  
\begin{definition}
    Let $\alpha=(\alpha_1,\dots,\alpha_\ell)$ be a composition of $d\geq 2$ and length $\ell$ such that $\alpha_1,\alpha_\ell\geq 2$, or $\alpha = (1)$. The \textbf{caterpillar of type $\alpha$}, denoted $C_\alpha=(V,E)$, is the graph with 
    \[V=\{v_1,\dots,v_\ell\}\cup \{w_{i,j}:i\in [\ell],\ j\in [\alpha_i-1]\},\]
    \[E=\{\{v_i,v_{i+1}\}:i\in [\ell-1]\}\cup \{\{v_i,w_{i,j}\}:i\in [\ell],\ j\in [\alpha_i-1]\}.\]
    See Figure~\ref{fig:caterpillar-564} for an example. The set of vertices $\{v_1,\dots,v_\ell\}$ is called the \textbf{spine} of the caterpillar. 
\end{definition}

Reversing the composition induces  an isomorphism of two caterpillars.
\begin{lemma}\label{lem:caterpillars-isomorphic}
    Let $\alpha=(\alpha_1,\dots,\alpha_\ell)$ and $\beta=(\beta_1,\dots,\beta_{\ell'})$ be compositions of sizes $d,d'\geq 4$ and lengths $\ell,\ell'\geq 2$, with $\alpha_1, \alpha_\ell, \beta_1, \beta_{\ell'} \geq 2$. Then the caterpillars $C_\alpha$ and $C_{\beta}$ are isomorphic if and only if $\beta=\alpha$ or $\beta=(\alpha_\ell,\dots,\alpha_1)$.
\end{lemma}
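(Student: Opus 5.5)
The easy direction comes first. If $\beta=\alpha$, the caterpillars are literally the same graph. If $\beta=(\alpha_\ell,\dots,\alpha_1)$, the map $v_i\mapsto v_{\ell+1-i}$, $w_{i,j}\mapsto w_{\ell+1-i,j}$ is a bijection that preserves both kinds of edges. Hence it is an isomorphism $C_\alpha\to C_\beta$.

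For the converse, suppose $\varphi\colon C_\alpha\to C_\beta$ is an isomorphism. The plan is to show that $\varphi$ must send spine to spine, preserving the path order up to reversal, and then to read off the degrees.

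The key step is an intrinsic description of the spine. I claim that the spine of $C_\alpha$ is exactly the set of non-leaf vertices.
\begin{itemize}
\item Every $w_{i,j}$ has degree $1$.
\item An interior spine vertex $v_i$ with $1<i<\ell$ has at least two spine neighbours, so it is not a leaf.
\item The end vertex $v_1$ has degree $\alpha_1-1+1=\alpha_1\ge 2$ when $\ell\ge 2$, and likewise for $v_\ell$. This is exactly where the hypotheses $\alpha_1,\alpha_\ell\ge 2$ and $\ell\ge 2$ are used. For instance, with $\ell=1$ a star would have different spine descriptions.
\end{itemize}
Isomorphisms preserve degrees, so $\varphi$ maps the non-leaf set of $C_\alpha$ bijectively onto that of $C_\beta$. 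In particular $\ell=\ell'$. Moreover, the non-leaf vertices induce the path $v_1\cdots v_\ell$, and $\varphi$ restricts to a graph isomorphism between these induced subgraphs. So it restricts to an automorphism of the path $P_\ell$ (after identifying spines via the indexing). The only automorphisms of a path are the identity and the reversal. Therefore either $\varphi(v_i)=v_i'$ for all $i$, or $\varphi(v_i)=v'_{\ell+1-i}$ for all $i$.

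Finally, compare degrees. In $C_\alpha$, the degree of $v_i$ equals $\alpha_i$ plus the number of spine neighbours minus one. Explicitly, $\deg v_i=\alpha_i-1+|\{i-1,i+1\}\cap[\ell]|$. The spine-neighbour count depends only on the position of $v_i$ in the path, and this count is symmetric under $i\mapsto \ell+1-i$. Since $\varphi$ preserves degrees, equating $\deg v_i=\deg \varphi(v_i)$ gives $\alpha_i=\beta_i$ for all $i$ in the first case, and $\alpha_i=\beta_{\ell+1-i}$ for all $i$ in the second. These are exactly the two stated possibilities.

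The only delicate point is the intrinsic characterization of the spine as the set of non-leaves. That is where the end-condition $\alpha_1,\alpha_\ell\ge2$ is essential, and I would check it carefully, including the boundary case $\ell=2$. The size condition $d\ge4$ is automatic from $\ell\ge2$ and the end conditions, so it needs no separate use.
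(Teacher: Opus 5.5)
Your proof is correct and complete. The paper gives no proof of this lemma at all; it only remarks before the statement that reversing the composition induces an isomorphism. Your argument therefore supplies the omitted converse, and it is the natural one:
\begin{enumerate}
\item Characterize the spine intrinsically as the set of non-leaf vertices. This is exactly where $\ell,\ell'\ge 2$ and the end conditions $\alpha_1,\alpha_\ell,\beta_1,\beta_{\ell'}\ge 2$ are needed.
\item Use that the only automorphisms of a path are the identity and the reversal.
\item Read off $\alpha_i=\beta_i$ or $\alpha_i=\beta_{\ell+1-i}$ from $\deg v_i=\alpha_i-1+|\{i-1,i+1\}\cap[\ell]|$, whose spine-neighbour term is symmetric under $i\mapsto \ell+1-i$.
\end{enumerate}
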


\begin{figure}[htbp]
    \centering
      \begin{tikzpicture}
            \tikzset{every node/.style={draw,circle,fill,inner sep=2pt}}
            \node (a1) at (0,0) {};
            \node (a2) at (.5,0) {};
            \node (a3) at (1,0) {};
            \node (a4) at (1.5,0) {};
            \node (a5) at (2,0) {};
            \node (a) at (1,1) {};

            \node (b1) at (2.5,0) {};
            \node (b2) at (3,0) {};
            \node (b3) at (3.5,0) {};
            \node (b4) at (4,0) {};
            \node (b5) at (4.5,0) {};
            \node (b6) at (5,0) {};
            \node (b) at (4,1) {};

            \node (cb) at (6,1) {};

            \node (c1) at (5.5+1.5,0) {};
            \node (c2) at (6+1.5,0) {};
            \node (c3) at (6.5+1.5,0) {};
            \node (c4) at (7+1.5,0) {};
            \node (c) at (6.5+1.5,1) {};

            \draw (a1)--(a);
            \draw (a2)--(a);
            \draw (a3)--(a);
            \draw (a4)--(a);
            \draw (a5)--(a);

            \draw (b1)--(b);
            \draw (b2)--(b);
            \draw (b3)--(b);
            \draw (b4)--(b);
            \draw (b5)--(b);
            \draw (b6)--(b);

            \draw (c1)--(c);
            \draw (c2)--(c);
            \draw (c3)--(c);
            \draw (c4)--(c);

            \draw (a)--(b)--(cb)--(c);
        \end{tikzpicture}
    \caption{The caterpillar $C_{(6,7,1,5)}$}
    \label{fig:caterpillar-564}
\end{figure}
We will sometimes denote the caterpillars $C_\alpha$ and $C_\beta$ simply as $\alpha$ and $\beta$ if the context is clear. We consider a special family of caterpillars: elongated bistars.
\begin{definition}
     A caterpillar of type $\alpha$ is a \textbf{bistar} if $\alpha$ has length $2.$ More generally, a caterpillar of type $\alpha$ is an \textbf{elongated bistar} if $\alpha$ is of the form $\alpha = (a, 1^r, b)$ for $a \geq b \geq 2$ and $r \geq 0.$ See \Cref{fig:elongated-bistar}.
\end{definition}

\begin{figure}[h]
\centering
      \begin{tikzpicture}
            \tikzset{every node/.style={draw,circle,fill,inner sep=2pt}}
            \node (a1) at (0,0) {};
            \node (a2) at (.5,0) {};
            \node (a3) at (1,0) {};
            \node (a) at (.5,1) {};

            \node (b) at (1.5,1) {};

            \node (c) at (2.5, 1) {};

            \node (d) at (3.5, 1) {};

            \node (e) at (4.5, 1) {};

            \node (f) at (5.5, 1) {};

           \node (g) at (6.5, 1) {};
           \node (g1) at (6.25, 0) {};
           \node (g2) at (6.75,0) {};

            \draw (a1)--(a);
            \draw (a2)--(a);
            \draw (a3)--(a);
            
            \draw (g1)--(g);
            \draw (g2)--(g);

            \draw (a)--(b)--(c)--(d)--(e)--(f)--(g);
        \end{tikzpicture}

\caption{The elongated bistar of type $(4, 1^5, 3).$}\label{fig:elongated-bistar}
\end{figure}

We are primarily interested in graph colorings.

\begin{definition}
    A \textbf{coloring} of a graph $G=(V,E)$ is a function $\kappa:V\to \Z_{>0}$. For $v\in V$, we call $\kappa(v)$ the \textbf{color} of vertex $v$. The \textbf{type} of a coloring is the infinite sequence $\alpha=(|\kappa^{-1}(1)|,|\kappa^{-1}(2)|,\dots)$. A coloring $\kappa$ is \textbf{proper} provided $\kappa(u)\neq \kappa(v)$ whenever $\{u,v\}\in E$. The \textbf{length} of a coloring $\kappa$, denoted $\ell(\kappa)$, is the number of distinct colors used, i.e. the size of its image. The set of proper colorings of $G$ is denoted $\PC_G.$ For a positive integer $n$, we say that $\kappa\in \PC_G$ is a \textbf{proper $n$-coloring} provided $\kappa(V) \subseteq [n]$. The set of proper $n$-colorings is denoted $ \PC_G(n)$. The \textbf{chromatic polynomial} $\chi_G:\Z_{>0}\to \Z_{\geq 0}$ is defined by 
    \[\chi_G(n)=|\PC_G(n)|.\]
\end{definition}
\begin{example}
    For any tree $T$ on $d$ vertices, $\chi_T(n)=n(n-1)^{d-1}$ for all $n\geq 1$.
\end{example}
\subsection{Symmetric functions and chromatic symmetric functions}
Let $\mathbf{x}=(x_1,x_2,\dots)$ be an infinite sequence of commuting variables. The space of symmetric functions of homogeneous degree $d$ is denoted $\Lambda_d(\mathbf{x})$ or just $\Lambda_d.$ For background on symmetric functions, see~\cite[Chapter~7]{stanley1999enumerative}.

\begin{definition}
The \textbf{monomial symmetric function} corresponding to a partition $\lambda$ is
\[
m_\lambda=\sum_{\alpha} \mathbf{x}^{\alpha}
\]
where the sum is over all distinct rearrangements $\alpha = (\alpha_1,\alpha_2, \dots)$ of $\lambda$ (possibly with internal and trailing zeros) and $\mathbf{x}^{\alpha}=\prod_{i \geq 1}x_{i}^{\alpha_i}$.
\end{definition}

It is well-known that $\{m_\lambda:\lambda\vdash d\}$ forms a basis of the vector space $\Lambda_d$.

\begin{definition}[{\cite{stanley_1995}}]
    The \textbf{chromatic symmetric function} of a graph $G=(V,E)$ is 
\[X_G(\mathbf{x})=\sum_{\kappa\in \PC_G} \mathbf{x}^{\kappa},\]
where $\mathbf{x}^{\kappa}=\prod_{v\in V}x_{\kappa(v)}.$
\end{definition}
This is a generalization of the chromatic polynomial in the sense that for any positive integer $n$,
\[X_G(\underbrace{1,1,\dots,1}_{n},0,0,\dots)=\chi_G(n).\]
Despite two trees of the same number of vertices always having the same chromatic polynomial, Stanley was unable to find two trees with the same chromatic symmetric function.

\begin{conj}[Stanley's Tree Isomorphism Conjecture\footnote{This is not phrased as a conjecture in either of \cite{stanley_1995} or \cite{stanley1999enumerative}, but it is convenient to refer to it as such here.} 
{\cite{stanley_1995}, \cite[Exercise~7.47(b)]{stanley1999enumerative}}]\label{question:tree-isom}
If $T$ and $T'$ are non-isomorphic trees, then $X_T(\mathbf{x})\neq X_{T'}(\mathbf{x})$.
\end{conj}

Recall that values of the chromatic polynomial $\chi_G(n)$ are obtained by setting the first $n$ variables of $X_G(\mathbf{x})$ to 1 and the rest to 0. While this specialization weakens distinguishing power, specialization of graph invariants often allows certain properties to be more easily read. This idea leads to the following paradigm: \textit{Specialize variables in the chromatic symmetric function to simplify as much as possible while still retaining the (conjectured) distinguishing property.} A symmetric function has a natural specialization.

\begin{definition}\label{def:nprincipal}
    Let $n$ be a positive integer and $f\in \Lambda_d(\mathbf{x})$. The \textbf{$n$th principal specialization of $f$} is
    \[\ps_n(f):=f(1,q,q^2,\dots,q^{n-1},0,0,\dots),\]
which is obtained by setting the variable $x_i = q^{i-1}$, for $1\leq i\leq n$ and $x_j = 0$, for all $j>n$.    
\end{definition}
From Definition \ref{def:nprincipal}, we can see that $\ps_n(f)$ is a polynomial in $\C[q]$. This polynomial has the interpretation that for a graph $G=(V,E)$, if
\begin{equation}\label{eq:psnXG-interpretation}\ps_n(X_G(\mathbf{x}))=\sum_{\substack{\kappa:V\to \{0,1,\dots,n-1\}\\\text{$\kappa$ proper}}}q^{\sum_{v\in V}\kappa(v)}=\sum_{i\geq 0}c_iq^i,\end{equation}
then $c_i$ is the number of proper colorings $\kappa: V\to \{0,1,\dots,n-1\}$ with $\sum_{v\in V}\kappa(v)=i$. Notice that under this interpretation of $\ps_n(X_G)$, we now include 0 as a color for notational convenience. In this setting, we call the quantity
\[\sum_{v\in V}\kappa(v)\]
the \textbf{weight} or  \textbf{chromatic sum} of the coloring $\kappa.$ This specialization of the chromatic symmetric function has been considered by Loehr and Warrington \cite{LoehrWarrington2024}  in the context of Stanley's Tree Isomorphism Conjecture.

\begin{conj}[{\cite[Conjecture 21]{LoehrWarrington2024}}]\label{conj:LW}
    For all $n\geq 1$, if $T$ and $T'$ are trees with at most $n$ vertices, then $T$ and $T'$ are isomorphic if and only if $\ps_n(X_T(\mathbf{x}))=\ps_n(X_{T'}(\mathbf{x})).$ In other words, $\ps_n(X_T(\mathbf{x}))$ distinguishes trees with at most $n$ vertices. 
\end{conj}
Loehr and Warrington confirmed this conjecture on all trees with at most $17$ vertices. If true, this would imply a positive answer to Stanley's conjecture. The polynomial $\ps_n(X_T(\mathbf{x}))$ has also been considered by Loebl \cite{Loebl07}.

Bajo, Beck, and Vindas-Mel\'{e}ndez \cite{BBV} define the \textbf{$q$-chromatic polynomial}\footnote{In the notation of \cite{BBV}, what we present here is $\chi^{\mathbf{1}}_{G}(q,y)$. They define more generally $\chi^{\lambda}_{G}(q,y)$ where $\lambda$ weights the vertices of $G$. This satisfies a similar specialization correspondence with the weighted chromatic symmetric function of Crew--Spirkl \cite{CrewSpirkl20}. } $\tilde{\chi}^q_G(y)$ to be the unique polynomial in $y$ with coefficients as rational functions in $q$ such that 
   \[\tilde{\chi}^q_G(1+q+\cdots+q^{n-1})=\ps_n(X_G(\mathbf{x}))\]
   for all $n\geq 1$.
   It turns out that the leading coefficient of $\tilde{\chi}^q_G(y)$ is equivalent information
   to evaluating $\tilde{\chi}_G^q(y)$ at $y=(1-q)^{-1}.$ Further, the leading coefficient of $\tilde{\chi}_G^q(y)$ can be viewed as a specialization of the chromatic symmetric function. The  \textbf{stable principal specialization} of a symmetric function $f$ is the limit of the $n$th principal specializations
    \[\ps(f)=\lim_{n\to \infty}\ps_n(f)=f(1,q,q^2,\dots)\]
   in the sense that the coefficient of $q^{i}$ in the sequence $\ps_1(f),\ps_2(f),\dots$ eventually becomes a constant for each $i\geq 0$. Then, up to a scaling factor, the leading coefficient of $\tilde{\chi}_G^q(y)$ is equivalent information to $\ps(X_G(\mathbf{x}))$ \cite[Corollary 18]{BBV}. 
   Bajo, Beck, and Vindas-Mel\'{e}ndez conjecture that the leading coefficient of $\tilde{\chi}_G^q(y)$ distinguishes trees.
   \begin{conj}\cite[Conjecture 1, Corollary 18]{BBV}\label{conj:BBV}
       $\ps(X_T(\mathbf{x}))$ distinguishes trees.
   \end{conj} This conjecture has also been confirmed on all trees up to $17$ vertices. However, \Cref{conj:LW} and \Cref{conj:BBV} seem independent from one other \textit{a priori}. To see this, consider the following:
   \begin{enumerate}
       \item The symmetric function defined in the \textit{Schur basis} (see \cite[Chapter 7]{stanley1999enumerative} for definitions)  \begin{align*}
           f&=s_{2,1,1,1,1,1,1} - s_{2,2,1,1,1,1} + s_{2,2,2,1,1} - 2 s_{2,2,2,2} + 2 s_{3,2,1,1,1}  - 3 s_{3,2,2,1}\\
           &\quad  + 3 s_{3,3,1,1}- s_{3,3,2} - 2 s_{4,1,1,1,1} + s_{4,2,1,1} - 4 s_{4,2,2} + 4 s_{4,3,1} + 2 s_{4,4} \\&\quad - 2 s_{5,2,1} - s_{5,3} + s_{6,1,1}
       \end{align*}
 of degree 8 has $\ps_8(f)=0$ but $\ps(f)\neq 0$.

\item The symmetric function \[g=s_{6,3,2,2,2}-s_{5,5,2,1,1,1}\] of degree 15 has $\ps(g)=0$ but $\ps_{15}(g)\neq 0$.
   \end{enumerate}
These examples (computed in SageMath~\cite{sagemath}) show that, in the space of symmetric functions, neither finite principal specialization nor stable principal specialization determines the other.
In particular, this means that $\ps_n(X_G)$ could potentially distinguish two graphs that $\ps(X_G)$ could not tell apart, and vice versa.

Implicit in \cite{BBV}, there is a conjecture that is weaker than both \Cref{conj:LW} and \Cref{conj:BBV}, but still stronger than \Cref{question:tree-isom}.
\begin{conj}[{\cite{BBV}}]\label{conj:q-chrom-polynomial-distinguishes}
    The $q$-chromatic polynomial distinguishes trees. Equivalently, the set of values $\{\ps_n(X_T(\mathbf{x})):n\geq 1\}$ distinguishes trees. 
\end{conj}
Note that the computational verifications of Conjectures 
\ref{conj:LW} and \ref{conj:BBV} imply that \Cref{conj:q-chrom-polynomial-distinguishes} holds for trees with at most $17$ vertices.

\section{The $t$-chromatic symmetric function}\label{sec:t-chromatic-symmetric-function}
In this section, we introduce a new $t$-analog of the chromatic symmetric function that tracks the number of distinct colors used. We give some basic properties and compare this new function and its specialization to the classical chromatic symmetric function.

\subsection{Definition and basic properties}
Recall that for a proper coloring $\kappa:V(G)\to \Z_{>0}$, the quantity $\ell(\kappa)$ is the number of distinct colors used by $\kappa.$
\begin{definition}
For a graph $G$ with $d$ vertices, the\textbf{ $t$-chromatic symmetric function} is
\[Z_G(\mathbf{x};t)=\sum_{\kappa\in \PC_G}t^{\ell(\kappa)}\mathbf{x}^\kappa.\]
\end{definition}
Observe that $Z_G(\mathbf{x};1)=X_G(\mathbf{x})$. The $t$-chromatic symmetric function naturally lives in the ring $\C[t][[\mathbf{x}]]$ of formal power series in $\mathbf{x}$ with coefficients in $\C[t]$. Let $\Lambda(\mathbf{x};t)$ be the corresponding ring of symmetric functions and let $\Lambda_d(\mathbf{x};t)$ be the $\C$-vector space of symmetric functions in $\mathbf{x}$ of homogeneous degree $d$ with coefficients in $\C[t]$  (with the usual addition and scalar multiplication). Note that $\Lambda_d(\mathbf{x};t)$ is an infinite dimensional vector space with basis \[\{t^im_\lambda:i\in \Z_{\geq 0},\ \lambda\vdash d\}.\]

\begin{example}\label{ex:Z-of-st5}
    Let $G$ be the path graph on four vertices, or equivalently, the caterpillar $C_{(2,2)}$. This has $t$-chromatic symmetric function  $Z_G(\mathbf{x}; t) = 2t^2m_{2,2}  + 6t^3m_{2, 1, 1} + 24t^4m_{1, 1, 1, 1}.$ Indeed, there are five types of colorings of $G$ up to symmetry; see Figure~\ref{fig:coloring4path}. The type in the top row contributes $2t^2m_{2, 2}$ since we can swap the red (1) and blue (2) colors. Similarly, each of the three coloring-types in the second row contributes $2t^3m_{2, 1, 1}$ and the coloring-type in the third row contributes $24t^4m_{1, 1, 1, 1}$.
\end{example}

\definecolor{cbred}{RGB}{187, 85, 102}   
\definecolor{cbblue}{RGB}{0, 68, 136}  
\definecolor{cbyellow}{RGB}{221, 170, 51}  
\definecolor{cbsky}{RGB}{136, 204, 238}
\begin{figure}[h]
\centering
      \begin{tikzpicture}[yscale=1]

             \node at (-9,1.25) {$\lambda = (2, 2)$};
             \node at (-9,0) {$\lambda =(2, 1, 1)$};
             \node at (-9,-1.25) {$\lambda =(1, 1, 1, 1)$};

             \tikzset{every node/.style={draw,circle,fill,inner sep=2pt}}
             
            \node[color=cbred, label=above:1] (a) at (-1.5,0) {};
            \node (b)[color=cbblue, label=above:2] at (-.5,0) {};
            \node (c)[color=cbsky, label=above:3] at (.5,0) {};
            \node (d)[color=cbred, label=above:1] at (1.5,0) {};

            \node (a')[color=cbred, label=above:1] at (-6.5,0) {};
            \node (b')[color=cbblue, label=above:2] at (-5.5,0) {};
            \node (c')[color=cbred, label=above:1] at (-4.5,0) {};
            \node (d')[color=cbsky, label=above:3] at (-3.5,0) {};

            \node (a'')[color=cbred, label=above:1] at (6.5,0) {};
            \node (b'')[color=cbblue, label=above:2] at (5.5,0) {};
            \node (c'')[color=cbred, label=above:1] at (4.5,0) {};
            \node (d'')[color=cbsky, label=above:3] at (3.5,0) {};

            \node (a1)[color=cbred, label=above:1] at (-1.5,1.25) {};
            \node (b1)[color=cbblue, label=above:2] at (-.5,1.25) {};
            \node (c1)[color=cbred, label=above:1] at (.5,1.25) {};
            \node (d1)[color=cbblue, label=above:2] at (1.5,1.25) {};

            \node (a3)[color=cbred, label=above:1] at (-1.5,-1.25) {};
            \node (b3)[color=cbblue, label=above:2] at (-.5,-1.25) {};
            \node (c3)[color=cbsky, label=above:3] at (.5,-1.25) {};
            \node (d3)[color=cbyellow, label=above:4] at (1.5,-1.25) {};
           
            \draw (a)--(b)--(c)--(d);
            \draw (a')--(b')--(c')--(d');
            \draw (a'')--(b'')--(c'')--(d'');
            \draw (a1)--(b1)--(c1)--(d1);
            \draw (a3)--(b3)--(c3)--(d3);

        \end{tikzpicture}

\caption{The five types of colorings of $C_{(2, 2)}$.}
\label{fig:coloring4path}
\end{figure}\
Our $t$-analog shares a number of properties with the classical chromatic symmetric function.

\begin{proposition}\label{prop:basic-properties-of-zg}
    Let $G$ be a graph with $d$ vertices. 
    \begin{enumerate}
        \item $Z_G(\mathbf{x};t)\in \Lambda_d(\mathbf{x};t)$.
        \item If $X_G(\mathbf{x}) = \sum_{\lambda \vdash d}c_\lambda m_\lambda$, then $Z_G(\mathbf{x}; t) = \sum_{\lambda \vdash d}t^{\ell(\lambda)}c_\lambda m_\lambda$.
        \item For all $n$, we have \[Z_G(\underbrace{1,\dots,1}_{n},0,0,\dots; 1)=\chi_G(n).\]
    \end{enumerate}
\end{proposition}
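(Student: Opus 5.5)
The plan is to prove part (2) first and then deduce parts (1) and (3) from it. Everything rests on one observation. For a coloring $\kappa$, the number of distinct colors $\ell(\kappa)$ is exactly the number of nonzero entries of its type $\alpha=(|\kappa^{-1}(1)|,|\kappa^{-1}(2)|,\dots)$. Write $\ell(\alpha)$ for this number of nonzero entries. Then $\ell(\alpha)$ equals the length of the partition obtained by sorting $\alpha$, and it depends only on the monomial $\mathbf{x}^\kappa=\mathbf{x}^{\alpha}$.

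For (2), group the proper colorings of $G$ by their type. Write
\[
X_G(\mathbf{x})=\sum_{\alpha} N_\alpha \mathbf{x}^\alpha,
\]
where $N_\alpha$ is the number of proper colorings of type $\alpha$. By the observation above,
\[
Z_G(\mathbf{x};t)=\sum_\alpha N_\alpha t^{\ell(\alpha)}\mathbf{x}^\alpha.
\]
Since $X_G$ is symmetric (Stanley), $N_\alpha$ depends only on the sorted partition $\lambda$ of $\alpha$, and $N_\alpha=c_\lambda$. We can also see this directly: permuting the colors by a bijection of $\mathbb{Z}_{>0}$ gives a bijection between proper colorings of types $\alpha$ and $\sigma(\alpha)$. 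Grouping the monomials $\mathbf{x}^\alpha$ with the same sorted type $\lambda$ then gives
\[
Z_G(\mathbf{x};t)=\sum_{\lambda\vdash d} t^{\ell(\lambda)}c_\lambda m_\lambda,
\]
because $\ell(\alpha)=\ell(\lambda)$ throughout each group.

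Part (1) follows at once. Each $t^{\ell(\lambda)}c_\lambda m_\lambda$ lies in $\Lambda_d(\mathbf{x};t)$: every monomial has degree $|\lambda|=d$, since each of the $d$ vertices contributes one variable. Moreover $c_\lambda=0$ unless $\lambda\vdash d$, and the sum is finite.

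For part (3), specialize $t=1$ to get $Z_G(\mathbf{x};1)=X_G(\mathbf{x})$, which is immediate from the definition. Then setting $x_1=\dots=x_n=1$ and all other variables to $0$ kills every monomial that uses a color larger than $n$. Each proper $n$-coloring contributes $1$, so the value is $|\PC_G(n)|=\chi_G(n)$.

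There is no real obstacle here. The only point needing care is that $\ell(\kappa)$ is an invariant of the monomial, i.e. of the type. This is what allows the $t$-weight to pass through the regrouping into monomial symmetric functions.
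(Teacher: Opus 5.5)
Your proposal is correct and takes essentially the paper's approach. The paper proves (1) by noting that swapping colors does not change the number of distinct colors used, which is your observation that $\ell(\kappa)$ depends only on the type, and it treats (2) and (3) as immediate from the definitions; you simply derive (1) from the expansion in (2) and write out details the paper leaves implicit.
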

\begin{proof}
    For Part (1), observe that swapping two colors does not change the total number of distinct colors used. Parts (2) and (3) follow immediately from the definitions.
\end{proof}
\begin{remark}\label{remark:bad-multiplication-for-ZG}
    For graphs $G$ and $H$, let $G\sqcup H$ denote their disjoint union. An important property of the chromatic symmetric function is that it is multiplicative across disjoint unions: $X_{G\sqcup H}=X_GX_H$ \cite[Proposition 2.3]{stanley_1995}. This is not in general true for $Z_G(\mathbf{x};t).$ For example, if $G$ and $H$ are each a  single vertex, then $Z_G(\mathbf{x};t)=Z_H(\mathbf{x};t)=tm_1$ and so $Z_G(\mathbf{x};t)Z_H(\mathbf{x};t)=(tm_1)^2=t^2(m_{2}+2m_{1,1})$. On the other hand,
    \[Z_{G\sqcup H}(\mathbf{x};t)=tm_2 + 2t^2m_{1,1}\neq t^2(m_{2}+2m_{1,1})=Z_G(\mathbf{x};t)Z_H(\mathbf{x};t).\]
    Hence, additional care must be taken when computing the $t$-chromatic symmetric function for disjoint graphs.
\end{remark}

Define a linear map $L_d:\Lambda_d(\mathbf{x})\to \Lambda_d(\mathbf{x};t)$ by setting $L_d(m_\lambda)=t^{\ell(\lambda)}m_\lambda$ and extending by linearity. Then, $L_d$ is injective and satisfies $L_d(X_G(\mathbf{x}))=Z_G(\mathbf{x};t)$ by Proposition~\ref{prop:basic-properties-of-zg}.  Let $\mathcal{G}_d$ be the set of graphs of $d$ vertices. Since $L_d$ is an injective linear map, an equation of the form
\[\sum_{G\in \mathcal{G}_d}c_G X_G(\mathbf{x})=0\]
for some constants $c_G\in \C$ holds if and only if\[\sum_{G\in \mathcal{G}_d}c_G Z_G(\mathbf{x};t)=0\] also holds.
This implies that $Z_G(\mathbf{x};t)$ obeys the following recurrence relations.\footnote{In light of \Cref{remark:bad-multiplication-for-ZG}, extra care must be used in applying these recurrence relations to compute $Z_G(\mathbf{x};t).$ In particular, one must compute directly the $t$-chromatic symmetric function for forests of paths and forests of stars, respectively, for applying triple deletion and deletion-near-contraction, respectively. }
\begin{proposition}[Triple deletion \cite{ORELLANA20141}/Modular Law \cite{guay-paquet_modular_2013}]
    Let $e_1,e_2,e_3$ form a triangle in $G$. Then
        \[Z_G(\mathbf{x};t)=Z_{G-e_1}(\mathbf{x};t)+Z_{G-e_2}(\mathbf{x};t)-Z_{G-\{e_1,e_2\}}(\mathbf{x};t),\]
        where for a subset of edges $S$, $G-S$ denotes the deletion of those edges from $G$.
\end{proposition}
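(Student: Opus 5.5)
The plan is to transfer the known triple-deletion identity for $X_G$ to $Z_G$ using the linear map $L_d$, rather than proving anything new about colorings.

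First, recall the classical statement from \cite{ORELLANA20141} (equivalently the modular law of \cite{guay-paquet_modular_2013}). If $e_1,e_2,e_3$ form a triangle in $G$, then
\[X_G(\mathbf{x})=X_{G-e_1}(\mathbf{x})+X_{G-e_2}(\mathbf{x})-X_{G-\{e_1,e_2\}}(\mathbf{x}).\]
If one wants a self-contained argument, I would prove this bijectively. Let $e_1=\{u,w\}$, $e_2=\{v,w\}$ and $e_3=\{u,v\}$. Every graph in the identity keeps $e_3$, so in every proper coloring $\kappa(u)\neq\kappa(v)$. Hence $\kappa(w)$ cannot equal both $\kappa(u)$ and $\kappa(v)$.

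Consider the proper colorings of $G-\{e_1,e_2\}$ that respect all other edges. These split into three disjoint classes:
\begin{itemize}
\item colorings with $\kappa(w)\notin\{\kappa(u),\kappa(v)\}$, which are exactly $\PC_G$;
\item colorings with $\kappa(w)=\kappa(u)$, which are exactly $\PC_{G-e_1}\setminus\PC_G$;
\item colorings with $\kappa(w)=\kappa(v)$, which are exactly $\PC_{G-e_2}\setminus\PC_G$.
\end{itemize}
This gives $\PC_{G-\{e_1,e_2\}}=\PC_{G-e_1}\cup\PC_{G-e_2}$ with $\PC_{G-e_1}\cap\PC_{G-e_2}=\PC_G$. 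Inclusion–exclusion then yields the identity at the level of sets of colorings.

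This set-level argument is also the key observation for $Z_G$. The sets of colorings themselves satisfy inclusion–exclusion, so it holds for any weight $\kappa\mapsto t^{\ell(\kappa)}\mathbf{x}^\kappa$. That gives the statement for $Z_G$ directly, without any need for $L_d$.

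Alternatively, follow the paper's route. All four graphs have the same vertex set of size $d$, so the $X$-identity is a linear relation $\sum_H c_HX_H=0$ in $\Lambda_d$. Applying the linear map $L_d$ and using $L_d(X_H)=Z_H$ from \Cref{prop:basic-properties-of-zg}(2) gives the result.

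The only point needing care, and hence the main obstacle, is checking that $\kappa(u)\neq\kappa(v)$ holds in all four graphs. It does, because none of the graphs deletes $e_3$. This is exactly what makes the two "bad" classes disjoint.
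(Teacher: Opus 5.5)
Your proposal is correct, and its main argument differs from the paper's. The paper gives no coloring-level proof at all. It notes that $L_d$ is linear with $L_d(X_H)=Z_H$ for every graph $H$ on $d$ vertices. Since all four graphs share the vertex set of $G$, the cited triple-deletion identity for $X$ then transfers verbatim to $Z$. Only linearity, not injectivity, is needed for this direction, and your ``alternative'' paragraph is exactly this route.

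Your primary route instead proves two set identities. The first is $\PC_{G-\{e_1,e_2\}}=\PC_{G-e_1}\cup\PC_{G-e_2}$; this is the one place the surviving edge $e_3$ is used, since a coloring cannot violate both $e_1$ and $e_2$. The second is $\PC_{G-e_1}\cap\PC_{G-e_2}=\PC_G$, which holds for any graph. You then sum $t^{\ell(\kappa)}\mathbf{x}^\kappa$ over these sets.

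This argument is self-contained and reproves the classical identity at $t=1$ along the way. It is also more general: it holds for any weight $\kappa\mapsto w(\kappa)$ on colorings of the common vertex set, symmetric or not (for instance Gebhard--Sagan's noncommutative $\mathbf{Y}_G$).

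The paper's transfer principle is shorter given the literature, and it handles every linear relation among graphs with the same number of vertices at once. In particular, the same sentence also yields deletion-near-contraction. There $G\odot e$ is not naturally on the same vertex set as $G$, so a set-level inclusion--exclusion like yours would not apply without further work.
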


For an edge $e$ of $G$, let $G\odot e$ be $G/e$ (contraction) with an additional leaf added to the contracted vertex of $e$. Let $(G\odot e)-\ell_e$ be $G\odot e$ with the new leaf-edge removed.
\begin{proposition}[Deletion-near-contraction \cite{AMOZ23}]
    Let $e$ be an edge of $G$. Then, we have 
        \[Z_G(\mathbf{x};t)=Z_{(G-e)}(\mathbf{x};t)-Z_{(G\odot e)-\ell_e}(\mathbf{x};t)+Z_{G\odot e}(\mathbf{x};t).\]
        \end{proposition}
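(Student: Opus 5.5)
The plan is to get this recurrence from the deletion-near-contraction relation for $X_G$ (\cite{AMOZ23}), using the injective linear map $L_d$ described above. The central observation is that all four graphs in the statement have the same number of vertices $d$. Indeed, $G-e$ has the vertices of $G$. The graph $G\odot e$ loses one vertex under contraction and regains it as the new leaf. Removing only the leaf-edge $\ell_e$ keeps that leaf as an isolated vertex, so $(G\odot e)-\ell_e$ also has $d$ vertices. Consequently every chromatic symmetric function involved lies in $\Lambda_d(\mathbf{x})$, which is exactly where $L_d$ is defined.

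The steps, in order, are as follows.
\begin{enumerate}
\item Invoke the deletion-near-contraction identity of \cite{AMOZ23},
\[X_G(\mathbf{x})=X_{G-e}(\mathbf{x})-X_{(G\odot e)-\ell_e}(\mathbf{x})+X_{G\odot e}(\mathbf{x}).\]
This is a $\C$-linear relation among chromatic symmetric functions of graphs in $\mathcal{G}_d$.
\item Apply $L_d$ to both sides. Linearity of $L_d$ preserves the relation term by term.
\item Use \Cref{prop:basic-properties-of-zg}(2), which gives $L_d(X_H)=Z_H(\mathbf{x};t)$ for every $H\in\mathcal{G}_d$, to rewrite each term and obtain the claimed formula for $Z_G$.
\end{enumerate}
If the cited identity is stated for $X_G$ with the isolated vertex left implicit (that is, as a product $X_{G/e}\,X_{K_1}$), I will first rewrite that product as $X_{(G\odot e)-\ell_e}$ using multiplicativity of $X$. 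This is legitimate for $X$, even though the corresponding product fails for $Z$ by \Cref{remark:bad-multiplication-for-ZG}.

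I expect the only real care point to be this bookkeeping around the isolated vertex. One must not write $Z_{(G\odot e)-\ell_e}$ as $Z_{G/e}\cdot Z_{K_1}$, since that step fails by \Cref{remark:bad-multiplication-for-ZG}. The argument avoids the issue by transporting the full $d$-vertex relation through $L_d$, and never factoring $Z$ of a disconnected graph.

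As a sanity check independent of the citation, one could verify the relation directly on colorings. Partition $\PC_{G-e}$ according to whether the endpoints $u,v$ of $e$ receive equal colors. The colorings with equal endpoint colors correspond to colorings of $(G\odot e)-\ell_e$ in which the isolated vertex gets the same color as the contracted vertex. Subtracting $Z_{(G\odot e)-\ell_e}$ and adding $Z_{G\odot e}$ removes precisely those colorings, because $Z_{(G\odot e)-\ell_e}-Z_{G\odot e}$ is the sum over colorings of $(G\odot e)-\ell_e$ in which the isolated vertex's color equals the contracted vertex's color. This correspondence matches colorings with the same multiset of colors, so it preserves both $\mathbf{x}^\kappa$ and $\ell(\kappa)$, and hence the $t$-weights.
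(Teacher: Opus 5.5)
Your proposal is correct and matches the paper's argument. The paper obtains this recurrence exactly by applying the injective linear map $L_d$, which satisfies $L_d(X_H)=Z_H(\mathbf{x};t)$ for every $d$-vertex graph $H$, to the deletion-near-contraction relation for $X_G$ from \cite{AMOZ23}; this works precisely because all four graphs have $d$ vertices, the point you isolate. Your direct check is also valid and makes the proof independent of the citation: it matches colorings of $G-e$ with $\kappa(u)=\kappa(v)$ to colorings of $(G\odot e)-\ell_e$ whose isolated vertex shares the contracted vertex's color, preserving both $\mathbf{x}^\kappa$ and $\ell(\kappa)$.
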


Moreover, the injectivity of $L_d$ implies that $Z_G(\mathbf{x};t)$ has equal strength as $X_G(\mathbf{x})$ in distinguishing graphs.
\begin{proposition}
    Let $G$ and $H$ be graphs. Then $Z_G(\mathbf{x};t)=Z_{H}(\mathbf{x};t)$ if and only if $X_G(\mathbf{x})=X_{H}(\mathbf{x})$.
\end{proposition}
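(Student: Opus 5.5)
Both directions go through the linear map $L_d:\Lambda_d(\mathbf{x})\to\Lambda_d(\mathbf{x};t)$ defined just above, which sends $m_\lambda$ to $t^{\ell(\lambda)}m_\lambda$. The plan is to reduce the statement to two facts: $Z_G$ is obtained from $X_G$ by applying $L_d$, and $X_G$ is recovered from $Z_G$ by setting $t=1$.

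First we dispose of the case where $G$ and $H$ have different numbers of vertices, say $d\neq d'$. Then $X_G$ and $X_H$ are homogeneous of different degrees $d$ and $d'$ and are nonzero, since each graph has at least one proper coloring. Likewise $Z_G$ and $Z_H$ are nonzero and homogeneous in $\mathbf{x}$ of different degrees. So both equalities in the statement fail, and the equivalence holds trivially. (If the empty graph is allowed, then $X=Z=1$ and the same degree argument applies.) From now on we assume both graphs have $d$ vertices.

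For the direction ($\Leftarrow$), suppose $X_G(\mathbf{x})=X_H(\mathbf{x})$. By Proposition~\ref{prop:basic-properties-of-zg}(2), $Z_G=L_d(X_G)$ and $Z_H=L_d(X_H)$. Applying $L_d$ to both sides gives $Z_G=Z_H$. For the direction ($\Rightarrow$), we use the observation $Z_G(\mathbf{x};1)=X_G(\mathbf{x})$, which holds directly from the definition. Specializing $t=1$ in the equality $Z_G(\mathbf{x};t)=Z_H(\mathbf{x};t)$ therefore gives $X_G=X_H$. Alternatively, this direction follows from the injectivity of $L_d$: the $m_\lambda$-coefficient of $Z_G$ is $t^{\ell(\lambda)}c_\lambda$, and $c_\lambda$ can be read off from it.

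There is no real obstacle here. The only points needing care are the degree bookkeeping when the vertex counts differ, and making sure that the specialization $t=1$ is a well-defined ring map on $\C[t][[\mathbf{x}]]$. It is, because each coefficient of a monomial $\mathbf{x}^\alpha$ is a polynomial in $t$.
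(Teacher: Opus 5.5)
Your proof is correct and takes essentially the same route as the paper. The paper's justification is that $Z_G = L_d(X_G)$ and $L_d$ is injective. Your $t=1$ specialization works equally well in place of injectivity for the forward direction, and your separate treatment of graphs with different vertex counts is a harmless detail the paper leaves implicit.
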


In particular, one proves Stanley's Tree Isomorphism Conjecture if and only if one proves the corresponding conjecture for the $t$-chromatic symmetric function.

\subsection{Specialization}
In this paper, we are interested in the principal specialization of $Z_G(\mathbf{x};t)$ which we define analogously to principal specialization of symmetric functions. 
\begin{definition}
   Let $n$ be a positive integer. The \textbf{$n$th-principal specialization} of $f=f(\mathbf{x};t)\in \Lambda_d(\mathbf{x};t)$ is defined to be
    \[\ps_n(f(\mathbf{x};t))=f(1,q,\dots,q^{n-1},0,0,\dots; t).\]
\end{definition}
This is a polynomial in $\C[q,t]$. This polynomial has the interpretation that for a graph $G=(V,E)$, if
\begin{equation}\label{eq:psnZG-interpretation}
\ps_n(Z_G(\mathbf{x};t))=\sum_{\substack{\kappa:V\to \{0,1,\dots,n-1\}\\\text{$\kappa$ proper}}}q^{\sum_{v\in V}\kappa(v)}t^{\ell(\kappa)}=\sum_{\substack{0\leq i,\\ 1 \leq j \leq n}}c_{i,j}q^it^j,
\end{equation}
then $c_{i,j}$ is the number of proper colorings $\kappa: V\to \{0,1,\dots,n-1\}$ with weight $\sum_{v\in V}\kappa(v)=i$ and length $\ell(\kappa)=j$. Since $Z_G(\mathbf{x};1)=X_G(\mathbf{x}),$ we therefore have the following result.

\begin{proposition}\label{prop:psnzg-equal-implies-psnxg-equal}
    Let $G$ and $H$ be graphs and $n$ a positive integer. If $\ps_n(Z_G(\mathbf{x};t))=\ps_n(Z_H(\mathbf{x};t))$, then $\ps_n(X_G(\mathbf{x}))=\ps_n(X_H(\mathbf{x}))$. 
\end{proposition}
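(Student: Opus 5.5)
The plan is to show that $\ps_n(X_G(\mathbf{x}))$ is obtained from $\ps_n(Z_G(\mathbf{x};t))$ by the single ring homomorphism $\C[q,t]\to\C[q]$ given by $t\mapsto 1$. Once that is established, the statement follows by applying this homomorphism to both sides of the hypothesis $\ps_n(Z_G(\mathbf{x};t))=\ps_n(Z_H(\mathbf{x};t))$.

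First, I will check that the two operations, principal specialization of the $\mathbf{x}$-variables and setting $t=1$, commute on $\Lambda_d(\mathbf{x};t)$. For this, write $Z_G(\mathbf{x};t)=\sum_{\lambda\vdash d}t^{\ell(\lambda)}c_\lambda m_\lambda$ using \Cref{prop:basic-properties-of-zg}(2). Principal specialization acts only on the $\mathbf{x}$-variables and is $\C[t]$-linear, so
\[\ps_n(Z_G(\mathbf{x};t))=\sum_{\lambda\vdash d}t^{\ell(\lambda)}c_\lambda\,\ps_n(m_\lambda).\]
Setting $t=1$ then gives $\sum_\lambda c_\lambda \ps_n(m_\lambda)=\ps_n(X_G(\mathbf{x}))$.

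Alternatively, the same identity can be read off directly from the combinatorial interpretations \eqref{eq:psnZG-interpretation} and \eqref{eq:psnXG-interpretation}. Both are sums over the same set of proper colorings $V\to\{0,\dots,n-1\}$. The only difference is the extra factor $t^{\ell(\kappa)}$, which becomes $1$ at $t=1$.

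Finally, since the $Z$-specializations of $G$ and $H$ are assumed to be equal as polynomials in $\C[q,t]$, their images under $t\mapsto1$ are equal. By the identity above, this says exactly that $\ps_n(X_G(\mathbf{x}))=\ps_n(X_H(\mathbf{x}))$.

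I do not expect a real obstacle here. The only point needing care is that the coefficients are finite sums, so evaluating at $t=1$ is legitimate. This holds because $\ps_n$ of a degree-$d$ symmetric function is a finite sum, so it yields an honest polynomial in $\C[q,t]$ and evaluating it at $t=1$ raises no convergence issues.
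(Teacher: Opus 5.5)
Your proposal is correct and is essentially the paper's argument: the paper just observes that $Z_G(\mathbf{x};1)=X_G(\mathbf{x})$, so setting $t=1$ in $\ps_n(Z_G(\mathbf{x};t))$ gives $\ps_n(X_G(\mathbf{x}))$, and equal polynomials stay equal after that evaluation. Your check through the monomial expansion, or through the coloring interpretation, only spells out that $\ps_n$ commutes with $t\mapsto 1$, which the paper leaves implicit.
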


While $X_G(\mathbf{x})$ and $Z_G(\mathbf{x};t)$ have equal strength in distinguishing graphs, their principal specializations do not. We show that it is strictly harder for two graphs to have the same principal specialization of $Z_G$ than it is for two graphs to have the same principal specialization of $X_G.$

\begin{example}\label{ex:345 354}
The smallest $n$ for which two graphs $G,H$ have $\ps_n(X_G(\mathbf{x}))=\ps_n(X_H(\mathbf{x}))$ but $\ps_n(Z_G(\mathbf{x};t))\neq\ps_n(Z_H(\mathbf{x};t))$ is $n=3$. As one relevant example, consider the caterpillars $C_{(5,4,3)}$ and $C_{(4,5,3)}$. Direct computation (found with SageMath \cite{sagemath}) shows that 
$\ps_3(X_{C_{(5,4,3)}}(\mathbf{x}))=\ps_3(X_{C_{(4,5,3)}}(\mathbf{x}))$ and their common value is 
\begin{align*}&q^{ 20} + 7 q^{19} + 31 q^{18} + 94 q^{17} + 224 q^{16} + 444 q^{15}+729 q^{14} + 991 q^{13} + 1102 q^{12}\\
 &\quad+ 991 q^{11} + 729 q^{10} + 444 q^{9} + 224 q^{8} + 94 q^{7} + 31 q^{6} + 7 q^{5} + q^{4}\end{align*}
but $\ps_3(Z_{C_{(5,4,3)}}(\mathbf{x};t))\neq \ps_3(Z_{{C_{(4,5,3)}}}(\mathbf{x};t))$ since
\begin{align*}
    \ps_3(Z_{C_{(5,4,3)}}(\mathbf{x};t))&=t^2(q^{19} + q^{17} + q^{14} + q^{10} + q^{7} + q^{5})\\
    &\quad+t^3(q^{20} + 6 q^{19} + 31 q^{18} + 93 q^{17} + 224 q^{16} + 444 q^{15} + 728 q^{14}\\
    &\quad+ 991 q^{13} + 1102 q^{12} + 991 q^{11} + 728 q^{10} + 444 q^{9} + 224 q^{8} + 93 q^{7}\\ &\quad+ 31 q^{6} + 6 q^{5} + q^{4})
\end{align*}
whereas
\begin{align*}
    \ps_3(Z_{C_{(4,5,3)}}(\mathbf{x};t))&=t^2(2 q^{18} + 2 q^{12} + 2 q^{6})\\
    &\quad+t^3( q^{20} + 7 q^{19} + 29 q^{18} + 94 q^{17} + 224 q^{16} + 444 q^{15} + 729 q^{14}\\ 
    &\quad+ 991 q^{13} + 1100 q^{12} + 991 q^{11} + 729 q^{10} + 444 q^{9} + 224 q^{8} + 94 q^{7}\\ &\quad+ 29 q^{6} + 7 q^{5} + q^{4})
\end{align*}
One can see a difference in the $t^2$ coefficients. 
As we will see in \Cref{cor:recover-sizes-bipartition-caterpillar}, this coefficient is telling us the sizes in the (unique) bipartition of the two respective caterpillars.
\end{example}

\begin{remark}[A peculiar family of caterpillars]
    In addition to their use in the previous example, we noticed another, seemingly unusual, property about the caterpillars $C_{(5, 4, 3)}$ and $C_{(4, 5, 3)}$. In particular, by direct computation in SageMath, one can check that even though $\ps_3(X_{C_{(5, 4, 3)}}(\mathbf{x})) = \ps_3(X_{C_{(4, 5, 3)}}(\mathbf{x}))$, we actually have   \[\ps_2(X_{C_{(5, 4, 3)}}(\mathbf{x})) \neq \ps_2(X_{C_{(4, 5, 3)}}(\mathbf{x})). \] This goes against the intuition that ``more colors'' necessarily means ``stronger distinguishing power,''  illustrating the subtleties of working with principal specializations. 
    
    This seems to be a rare property. In fact, we verified computationally that there are only three pairs of compositions $\alpha, \beta$ with $|\alpha| = |\beta| \leq 16$ satisfying $\ps_2(X_{C_{\alpha}}(\mathbf{x})) \neq \ps_2(X_{C_{\beta}}(\mathbf{x}))$ but $\ps_3(X_{C_{\alpha}}(\mathbf{x})) = \ps_3(X_{C_{\beta}}(\mathbf{x}))$: 
\begin{itemize}
    \item $(5, 4, 3)$ and $(4, 5, 3)$,
 \item $(5, 1, 1, 4, 3)$ and $(4, 1, 1, 5, 3)$, and
  \item $(5, 2, 2, 4, 3)$ and $(4, 2, 2, 5, 3)$.
\end{itemize}
Additionally, we verified that for $j \leq 7$, each pair $(5, j, j, 4, 3)$ and $(4, j, j, 5, 3)$ exhibits this phenomenon, raising the following questions. Do $(5, j, j, 4, 3)$ and $(4, j, j, 5, 3)$ exhibit this phenomenon for all $j$? And if so, do they continue to be the only pairs of compositions which do?
\end{remark}

    \begin{example}\label{ex:same-PSX--different-PSZ}
    The smallest example of two graphs $G,H$ having $\ps_n(X_{G}(\mathbf{x})) = \ps_n(X_{H}(\mathbf{x}))$ and $\ps_n(Z_{G}(\mathbf{x};t)) \neq \ps_n(Z_{H}(\mathbf{x};t))$ for some $n$ can be found on $6$ vertices. As one example, let $G$ and $H$ be as in \Cref{fig:same_ps4_xg_different_ps4_zg}. Then, $\ps_4(X_G(\mathbf{x}))=\ps_4(X_H(\mathbf{x}))$ and their common value (as found using SageMath~\cite{sagemath}) is 
    \[q^{14} + 3 q^{13} + 13 q^{12} + 24 q^{11} + 34 q^{10} + 42 q^{9} + 34 q^{8} + 24 q^{7} + 13 q^{6} + 3 q^{5} + q^{4},\]
    but $\ps_4(Z_G(\mathbf{x};t))\neq \ps_4(Z_H(\mathbf{x};t))$ since 

    \begin{align*}
         \ps_4(Z_{G}(\mathbf{x};t))&=t^3(q^{14} + 3 q^{13} + q^{12} + 4 q^{11} + 2 q^{10} + 2 q^{9} + 2 q^{8} + 4 q^{7} + q^{6} + 3 q^{5} + q^{4})\\
         &\quad+t^4(12 q^{12} + 20 q^{11} + 32 q^{10} + 40 q^{9} + 32 q^{8} + 20 q^{7} + 12 q^{6})
    \end{align*}
    whereas
    \begin{align*}
        \ps_4(Z_{H}(\mathbf{x};t))&=t^3(q^{14} + 3 q^{13} + 7 q^{12} + 4 q^{11} + 8 q^{10} + 2 q^{9} + 8 q^{8} + 4 q^{7} + 7 q^{6} + 3 q^{5} + q^{4})\\
        &\quad+t^4(6 q^{12} + 20 q^{11} + 26 q^{10} + 40 q^{9} + 26 q^{8} + 20 q^{7} + 6 q^{6})
    \end{align*}
   
    Note that $q^{12}t^3$ and $q^{12}t^4$ coefficients differ in both $t$-chromatic symmetric function specializations, but add to the same thing in the classical case! 
    We further note for posterity that $G$ and $H$ have non-equal $(t)$-chromatic symmetric functions since \[X_G(\mathbf{x})=720 m_{1,1,1,1,1,1} + 144 m_{2,1,1,1,1} + 20 m_{2,2,1,1} + 12 m_{3,1,1,1} + m_{3,2,1}\]
    whereas
    \[X_H(\mathbf{x})=720 m_{1,1,1,1,1,1} + 120 m_{2,1,1,1,1} + 20 m_{2,2,1,1} + 6 m_{2,2,2} + 6 m_{3,1,1,1} + m_{3,2,1}.\]
    \end{example}
   
    \begin{figure}
        \centering
        \begin{subfigure}[b]{.4\textwidth}
        \centering
         \begin{tikzpicture}
       \tikzset{every node/.style={draw,circle,fill=black,inner sep=2pt}}
       \node (1) at (0,1) {};
       \node (2) at (0,-.5) {};
       \node (3) at (1,1) {};
       \node (4) at (2,-.5) {};
       \node (5) at (1,0) {};
       \node (6) at (2,1) {};
        \draw (3)--(1)--(2)--(4)--(5)--(3)--(6)--(4)--(3);
        \draw (3)--(2);
       \end{tikzpicture}
             \caption{$G$}
        \label{fig:same_ps4_xg_different_ps4_zg_graph1}
        \end{subfigure}
        \begin{subfigure}[b]{.4\textwidth}
        \centering
         \begin{tikzpicture}
         \tikzset{every node/.style={draw,circle,fill=black,inner sep=2pt}}
         \node (1) at (0,0) {};
       \node (2) at (1,1) {};
       \node (3) at (1,0) {};
       \node (4) at (1,-1) {};
       \node (5) at (2,0) {};
       \node (6) at (3,0) {};
        \draw (1)--(2)--(6)--(4)--(1)--(3)--(2)--(5)--(4)--(3)--(5);
       \end{tikzpicture}
             \caption{$H$}
        \label{fig:same_ps4_xg_different_ps4_zg_graph2}
        \end{subfigure}
      
        \caption{$\ps_4(X_G)=\ps_4(X_H)$ but $\ps_4(Z_G(\mathbf{x};t))\neq \ps_4(Z_H(\mathbf{x};t))$}
        \label{fig:same_ps4_xg_different_ps4_zg}
    \end{figure}
    
\begin{conj}\label{conj:LW-for-ZG}
    For all $n$, $\ps_n(Z_G(\mathbf{x};t))$ distinguishes trees with at most $n$ vertices. 
\end{conj}

From \Cref{prop:psnzg-equal-implies-psnxg-equal} and Examples \ref{ex:345 354} and \ref{ex:same-PSX--different-PSZ}, we can see that this conjecture is strictly weaker than the conjecture of Loehr and Warrington. Therefore it is true for $n\le 17$.
\begin{cor}\label{cor.lw.implies}
    If Conjecture~\ref{conj:LW} is true, then \Cref{conj:LW-for-ZG} is true. 
\end{cor}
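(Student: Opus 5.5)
The plan is to argue by contraposition and reduce everything to \Cref{prop:psnzg-equal-implies-psnxg-equal}. Assume \Cref{conj:LW} holds, fix $n\geq 1$, and let $T$ and $T'$ be trees with at most $n$ vertices such that
\[
\ps_n(Z_T(\mathbf{x};t))=\ps_n(Z_{T'}(\mathbf{x};t)).
\]
The goal is to show that $T\cong T'$. By \Cref{conj:LW-for-ZG}, this is exactly what it means for $\ps_n(Z_T(\mathbf{x};t))$ to distinguish trees with at most $n$ vertices. The converse direction is automatic, since isomorphic trees have the same $Z$ and hence the same specializations.

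First, apply \Cref{prop:psnzg-equal-implies-psnxg-equal} to $T$ and $T'$. Concretely, set $t=1$ in both sides of the equality of bivariate polynomials. By \eqref{eq:psnZG-interpretation} and \eqref{eq:psnXG-interpretation}, specializing $t=1$ sends $\ps_n(Z_G(\mathbf{x};t))$ to $\ps_n(X_G(\mathbf{x}))$, because $Z_G(\mathbf{x};1)=X_G(\mathbf{x})$ and the substitutions in $q$ and in $t$ commute. This gives
\[
\ps_n(X_T(\mathbf{x}))=\ps_n(X_{T'}(\mathbf{x})).
\]

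Second, since $T$ and $T'$ are trees with at most $n$ vertices, \Cref{conj:LW} (which we are assuming) applies to this equality and yields $T\cong T'$. That completes the implication.

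There is no real obstacle here; the only point to check is that both conjectures quantify over the same $n$ and the same class of trees (at most $n$ vertices), which they do. It could also be noted after the argument that Examples \ref{ex:345 354} and \ref{ex:same-PSX--different-PSZ} show the converse implication between the two conjectures is not formally available by this route. The statement itself, however, requires only the one-line deduction above.
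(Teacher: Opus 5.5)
Your proposal is correct and is exactly the paper's argument: the paper's proof also just sets $t=1$ via \Cref{prop:psnzg-equal-implies-psnxg-equal} to get $\ps_n(X_T(\mathbf{x}))=\ps_n(X_{T'}(\mathbf{x}))$ and then invokes \Cref{conj:LW}. One small wording point: what you wrote is a direct argument rather than a contraposition, but this does not affect its validity.
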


\begin{proof}
    Follows from Proposition~\ref{prop:psnzg-equal-implies-psnxg-equal}.
\end{proof}

By Corollary~\ref{cor.lw.implies}, Conjecture~\ref{conj:LW-for-ZG} is true for $n\le 17$. While weaker, \Cref{conj:LW-for-ZG} is nevertheless still enough to imply a positive answer to Stanley's Tree Isomorphism Conjecture.
\begin{proposition}
    If \Cref{conj:LW-for-ZG} is true, then $X_G(\mathbf{x})$ distinguishes trees.
\end{proposition}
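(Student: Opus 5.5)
We argue by contraposition. Suppose $T$ and $T'$ are trees with $X_T(\mathbf{x})=X_{T'}(\mathbf{x})$; we show they are isomorphic. Since the chromatic symmetric function is homogeneous of degree equal to the number of vertices, equality of $X_T$ and $X_{T'}$ forces $T$ and $T'$ to have the same number of vertices, say $d$. (Alternatively, one can read $d$ off the chromatic polynomial $\chi_T(n)=n(n-1)^{d-1}$, which is recovered from $X_T$ by specialization.)

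Next we transfer the equality to the $t$-chromatic side. By the preceding proposition, which rests on the injectivity of $L_d$, $X_T(\mathbf{x})=X_{T'}(\mathbf{x})$ implies $Z_T(\mathbf{x};t)=Z_{T'}(\mathbf{x};t)$. Principal specialization is a well-defined function on $\Lambda_d(\mathbf{x};t)$, so applying $\ps_d$ to both sides gives
\[
\ps_d(Z_T(\mathbf{x};t))=\ps_d(Z_{T'}(\mathbf{x};t)).
\]

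Now apply \Cref{conj:LW-for-ZG} with $n=d$. Both $T$ and $T'$ have at most $d$ vertices, so $\ps_d(Z_G(\mathbf{x};t))$ distinguishes them. Equality of the specializations therefore forces $T\cong T'$. This is exactly the contrapositive of the statement that $X_G$ distinguishes trees.

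There is no real obstacle here. The only point needing care is the choice of $n$. We must take $n$ at least the common number of vertices, so that the conjecture applies. This is harmless because that number is determined by $X_T$, and the same $n$ can be used for both trees.
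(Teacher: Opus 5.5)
Your proof is correct and matches the paper's argument: from $X_T=X_{T'}$ you get $Z_T=Z_{T'}$ via the injectivity of $L_d$, apply a principal specialization, and invoke \Cref{conj:LW-for-ZG}. The only difference is cosmetic: you take $n$ to be the common vertex count $d$, whereas the paper allows any $n$ at least the number of vertices of both trees.
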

\begin{proof}
    If $G$ and $H$ are two trees on at most $n$ vertices (for some positive $n$) and $X_G(\mathbf{x})=X_H(\mathbf{x})$, then $Z_G(\mathbf{x};t)=Z_H(\mathbf{x};t)$. Hence,  $\ps_n(Z_G(\mathbf{x};t))=\ps_n(Z_H(\mathbf{x};t))$ and so \Cref{conj:LW-for-ZG}, if true, implies that $G\cong H$.
\end{proof}

The following proposition records some basic properties of $\ps_n(Z_G(\mathbf{x};t))$ that we will make use of in the later sections.

\begin{proposition}\label{prop:basic-properties-psn(Z_G)}
Let $n\geq 2$ and $G$ be a graph with at least 2 vertices such that $\chi_G(n) > 0$.  
Suppose the minimum and maximum degrees of $q$ appearing in $\ps_n(Z_G(\mathbf{x}; t))$ are $d$ and $D$, respectively.
\begin{enumerate}
\item The number of vertices in $G$ is equal to $\frac{d+D}{n-1}$.
\item Given any $j \in \{d,d+1,\dots,D\}$, the coefficient of $q^j$ is equal to that of $q^{d+D-j}$.
\item Suppose $G$ is a connected bipartite graph. Moreover, let $c_1$ (resp. $c_2$) be the size of the smallest (resp. largest) part of the unique bipartition 
of $G$. Then,
\[[t^2]\ps_n(Z_G(\mathbf{x};t)) = \sum_{\substack{j_1,j_2\in \{0,1,\dots,n-1\}\\ j_1\neq j_2}} q^{c_1j_1+c_2j_2}.\]
\end{enumerate}
\end{proposition}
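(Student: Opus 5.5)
The plan is to work throughout with the combinatorial interpretation \eqref{eq:psnZG-interpretation}: $\ps_n(Z_G(\mathbf{x};t))$ is the generating function of proper colorings $\kappa:V\to\{0,1,\dots,n-1\}$, with $q$ recording the weight and $t$ the length. Let $N=|V(G)|$. The central tool for parts (1) and (2) will be the complementation involution $\kappa\mapsto \bar\kappa$, where $\bar\kappa(v)=n-1-\kappa(v)$.

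This map preserves properness, since $x\mapsto n-1-x$ is a bijection on colors. It also preserves length, because it merely relabels the colors that are used. Finally, it sends weight $w$ to $N(n-1)-w$. Consequently the coefficient of $q^w t^j$ equals that of $q^{N(n-1)-w}t^j$, and the polynomial is palindromic about $N(n-1)/2$. Since $\chi_G(n)>0$, the polynomial is nonzero, so $d$ and $D$ are well defined. Palindromicity then forces $d+D=N(n-1)$, which gives (1); note that $n\ge 2$ ensures $n-1\neq 0$. Applying the same symmetry coefficientwise in $q$, summing over $t$ or keeping $t$ as desired, gives $[q^j]=[q^{d+D-j}]$, which is (2).

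For (3), assume $G$ is connected and bipartite. A proper coloring using exactly two colors must give adjacent vertices different colors. Connectedness then forces the coloring to be constant on each part of the unique bipartition $A\sqcup B$, with $|A|=c_1$ and $|B|=c_2$, and the two constants must differ. This yields a bijection between length-$2$ proper colorings and ordered pairs $(j_1,j_2)$ of distinct colors in $\{0,\dots,n-1\}$, namely $A\mapsto j_1$ and $B\mapsto j_2$. The weight of such a coloring is $c_1j_1+c_2j_2$, so summing over all such pairs gives the formula. One detail needs care: uniqueness of the bipartition up to swapping parts requires connectedness. Since the sum ranges over ordered pairs with the roles of $c_1$ and $c_2$ fixed, there is no double counting. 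Also, $G$ has at least 2 vertices and is connected, hence it has an edge, so no length-1 coloring interferes, although that fact is not even needed for the $t^2$ coefficient.

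The main obstacle is minor. It is making sure the extremes $d$ and $D$ are actually determined by the symmetry, that is, that palindromicity about $N(n-1)/2$ forces $d+D=N(n-1)$. This follows directly because the set of exponents with nonzero coefficient is symmetric under $w\mapsto N(n-1)-w$.
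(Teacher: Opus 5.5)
Your proof is correct and is essentially the paper's own argument. The color-reversing involution (your $\kappa\mapsto n-1-\kappa$ on $\{0,\dots,n-1\}$, the paper's $\kappa\mapsto n+1-\kappa$ on $[n]$) preserves properness and length and sends weight $w$ to $(n-1)|V|-w$, which gives (1) and (2); you read $d+D=(n-1)|V|$ directly off the symmetry of the support, whereas the paper argues by contradiction. Part (3) is the same observation that length-$2$ colorings of a connected bipartite graph are constant on the two parts. One small point: the existence of an edge is not incidental in (3). It makes both parts nonempty, which is what guarantees that every ordered pair $(j_1,j_2)$ with $j_1\neq j_2$ gives a coloring of length exactly $2$.
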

\begin{proof}
Let $\PC_G(n)$ be the set of proper colorings $\kappa:V\to [n]$. First, observe that $d$ and $D$ are the minimum and maximum weights, respectively, of all colorings $\kappa\in \PC_G(n)$. Second, define a map $\psi:\PC_G(n)\to \PC_G(n)$ as follows: for $\kappa\in \PC_G(n)$, let $\rho:=\psi(\kappa)$ be the coloring defined by $\rho(v)=n-\kappa(v)+1$ for $v\in V$. It is straightforward to check that $\rho$ is a proper coloring. Moreover, if $\kappa$ has weight $w$, then the weight of $\rho=\psi(\kappa)$ is

\[\sum_{v\in  V}(\rho(v)-1)=\sum_{v\in  V}n-\kappa(v)+1-1=(n-1)|V|-w.\]

(1) Let $\kappa\in \PC_G(n)$ be a proper coloring of $G$ of minimum weight $d$. Then, $\rho=\psi(\kappa)$ has weight $(n-1)|V|-d$. Suppose for sake of contradiction that $(n-1)|V|-d < D$. Then if $\tau\in \PC_G(n)$ is a coloring of weight $D$ and $\sigma=\psi(\tau)$, then $\sigma$ has weight $(n-1)|V|-D$. Since, $(n-1)|V|-d<D$, we get that $(n-1)|V|-D<d$ which means that $\sigma$ has strictly smaller weight than $\kappa$ which is a contradiction of the minimality of $\kappa.$ Thus, $\rho$ must achieve maximal coloring and hence $(n-1)|V|-d=D$, which gives the desired result.

(2) The coefficient of $q^j$ is
\[\sum_{i\geq 0}c_{i,j}t^{i},\] 
where $c_{i,j}$ is the number of proper colorings of  length $i$ and weight $j$. It is straightforward to check that $\psi$ is an involution and that if $\kappa$ uses $i$ distinct colors, then $\psi(\kappa)$ also uses $i$ distinct colors. Thus, $c_{i,j}=c_{i,d+D-j}$ for all $i,j$ and so the claim follows since $(n-1)|V|=d+D$ by part (1).

(3) Let $G$ be a connected bipartite graph with at least 2 vertices. Then $G$ has a unique bipartition into sets $C_1$ and $C_2$. Without loss of generality, assume that $|C_i|=c_i$ for $i\in \{1,2\}$. Hence, any $\kappa\in\PC_G(n)$ that uses exactly $2$ distinct colors is obtained by picking distinct $k_1, k_2\in [n]$ and then coloring all of the vertices of $C_1$ as $k_1$ and all of the vertices of $C_2$ as $k_2$. Thus, the claim follows.\qedhere

\end{proof}
As an immediate consequence, as well as preview of our results below, we can use $\ps_n(Z_G(\mathbf{x};t))$ to recover some information about caterpillars.
\begin{cor}\label{cor:recover-sizes-bipartition-caterpillar}
   Let $n\geq 2$. Consider a caterpillar of type $\alpha=(\alpha_1,\dots,\alpha_k)$. For $i\in [k]$, let $\overline{\alpha_i}=\alpha_i-1$.
   \begin{enumerate}
       \item If $k$ is odd, then we can recover the following quantities from $\ps_n(Z_{\alpha}(\mathbf{x};t))$:
   \[\begin{cases}
       \overline{\alpha_1}+\overline{\alpha_3}+\cdots+\overline{\alpha_k}+\lfloor k/2\rfloor, \\
       \overline{\alpha_2}+\overline{\alpha_4}+\cdots+\overline{\alpha_{k-1}}+\lceil k/2\rceil.
   \end{cases}\]
   \item  If $k$ is even, then we can recover the following quantities from $\ps_n(Z_{\alpha}(\mathbf{x};t))$:
   \[\begin{cases}
       \overline{\alpha_1}+\overline{\alpha_3}+\cdots+\overline{\alpha_{k-1}}+ k/2, \\
       \overline{\alpha_2}+\overline{\alpha_4}+\cdots+\overline{\alpha_k}+k/2.
   \end{cases}\]
   \end{enumerate}
\end{cor}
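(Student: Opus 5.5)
The plan is to apply \Cref{prop:basic-properties-psn(Z_G)}(3) to the caterpillar $C_\alpha$, which is a connected bipartite graph on $|\alpha|\geq 2$ vertices with $\chi_{C_\alpha}(n)>0$ for $n\geq 2$. First, I will identify the unique bipartition of $C_\alpha$ explicitly. The spine $v_1,\dots,v_k$ is a path, so the spine vertices alternate between the two classes: the odd-indexed $v_i$ lie in one class and the even-indexed $v_i$ in the other. Each leaf $w_{i,j}$ is adjacent only to $v_i$, so it lies in the class opposite to $v_i$. Hence one class consists of the odd-indexed spine vertices together with the leaves of the even-indexed spine vertices. The other class consists of the even-indexed spine vertices together with the leaves of the odd-indexed spine vertices.

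Next, I count the two classes in each parity case. If $k$ is odd, there are $\lceil k/2\rceil$ odd-indexed and $\lfloor k/2\rfloor$ even-indexed spine vertices. The class containing the leaves of the odd spine vertices therefore has size $\overline{\alpha_1}+\overline{\alpha_3}+\cdots+\overline{\alpha_k}+\lfloor k/2\rfloor$, and the other class has size $\overline{\alpha_2}+\cdots+\overline{\alpha_{k-1}}+\lceil k/2\rceil$. If $k$ is even, both parities contain $k/2$ spine vertices, which gives the two sizes displayed in part (2).

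Finally, I will show that the unordered pair $\{c_1,c_2\}$ is determined by the polynomial $P=[t^2]\ps_n(Z_{\alpha}(\mathbf{x};t))$, which by \Cref{prop:basic-properties-psn(Z_G)}(3) equals $\sum_{j_1\neq j_2}q^{c_1j_1+c_2j_2}$. The total number of vertices is $c_1+c_2$, and this is recoverable by \Cref{prop:basic-properties-psn(Z_G)}(1). For $c_1\leq c_2$, the minimum exponent in $P$ is attained at $j_2=0$, $j_1=1$, and equals $c_1$. Thus $c_1=\mindeg P$ and $c_2=(c_1+c_2)-c_1$.

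The main point needing care is this minimum-degree claim. The exponent $c_1j_1+c_2j_2$ with $j_1\neq j_2$ forces at least one of $j_1,j_2$ to be positive, so the exponent is at least $\min(c_1,c_2)=c_1$, and the value $c_1$ is achieved. Because $P$ determines only the unordered pair, the corollary recovers the two displayed quantities as an unordered pair, which is consistent with caterpillar reversal.
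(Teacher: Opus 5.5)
Your proposal is correct and follows the paper's proof. You identify the bipartition by alternation along the spine, then read $c_1$ off as the minimum $q$-degree of $[t^2]\ps_n(Z_\alpha(\mathbf{x};t))$ using Proposition~\ref{prop:basic-properties-psn(Z_G)}(3); the only inessential difference is that you obtain $c_2$ as the vertex count from part (1) minus $c_1$, whereas the paper uses the maximum degree $M$ of the $t^2$-coefficient, $c_2=\frac{M-(n-2)c_1}{n-1}$.
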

\begin{proof}The unique bipartition (with sizes $c_1\leq c_2$) of a caterpillar alternates membership along its spine. 
From this observation, it is straightforward to see that parts of the bipartition have sizes as in the claim (in their respective cases). Now let $m$ (resp. $M$) be the minimum (resp. maximum) $q$-degree of $[t^2]\ps_n(Z_G(\mathbf{x};t))$. From \Cref{prop:basic-properties-psn(Z_G)} part (3), we have that $c_1=m$ and $c_2=\frac{M-(n-2)m}{n-1}$, and so the claim follows.
\end{proof}
\begin{remark}
    The minimum $q$-degree of $\ps_n(Z_G(\mathbf{x};t))$ is the same as that of $\ps_n(X_G(\mathbf{x}))$. For a tree, this common value need \textit{not} be equal to the size of the smallest part of the unique bipartition. In other words, a coloring of the tree of minimal weight (which contributes to the minimal $q$-degree) is not always achieved by a $2$-coloring. The problem of finding a coloring of minimal weight of a graph, called the \textit{chromatic sum problem}, was first investigated by Kubicka \cite{Kubicka1989} who showed it to be an NP-complete problem for general graphs. For trees, she constructed for each positive integer $k$ a family of trees whose chromatic sum requires at least $k$ colors to be achieved. As a consequence, the bipartition of a tree may be much harder to recover from $\ps_n(X_G(\mathbf{x}))$ than it is to recover from $\ps_n(Z_G(\mathbf{x};t))$.
\end{remark}

\subsection{$\ps_2$ classes}

Recall from \Cref{prop:basic-properties-psn(Z_G)} that if $G$ is a connected bipartite graph, then 
\[\ps_2(Z_G(\mathbf{x};t))=t^2(q^{c_1}+q^{c_2})\]
where $c_1\leq c_2$ are the sizes of the parts in the unique bipartition of $G$ into independent sets. The \textbf{star on $k$ vertices} $\st_k$ is the only tree with bipartition sets of sizes $1$ and $k-1$, and therefore $\st_k$ is the unique tree in the $\ps_2$ class for $t^2(q+q^{k-1})$.

 We next study the $\ps_2$ classes of caterpillars. Since the unique bipartition of a caterpillar is created by alternating membership along its spine as explained in the proof of Corollary~\ref{cor:recover-sizes-bipartition-caterpillar}, we characterize all types of caterpillars in the same $\ps_2$ class in the following proposition. 

\begin{proposition}\label{prop:ps2-classes-caterpillars}

 All caterpillars with $k\geq 4$ vertices and $\ell \geq 2$ vertices in the spine in the $\ps_2$ class for $t^2(q^{c_1} + q^{c_2})$ with $2\leq c_1 \leq c_2$ are of the following possible types, up to reversal of the composition. 
 \begin{itemize}
     \item If $\ell$ is even, then $(a_1, b_1, a_2, b_2, \ldots, a_{\ell/2}, b_{\ell/2}) \models k$ where $\sum_{i=1}^{\ell/2} b_i = c_1$.
     
     \item If $\ell$ is odd, then either 
     \begin{itemize}
         \item $(a_1, b_1, a_2, b_2, \ldots, b_{\lfloor \ell/2\rfloor}, a_{\lceil \ell/2 \rceil}) \models k$ where $\sum_{i=1}^{\lfloor \ell/2 \rfloor} b_i = c_1-1$, or
         \item $(b_1, a_1, b_2, a_2, \ldots, a_{\lfloor \ell/2\rfloor}, b_{\lceil \ell/2 \rceil}) \models k$ where $\sum_{i=1}^{\lceil \ell/2 \rceil} b_i = c_1+1$.
     \end{itemize}
 \end{itemize}

\end{proposition}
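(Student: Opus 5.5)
The approach is to combine the observation after \Cref{prop:basic-properties-psn(Z_G)}, namely that $\ps_2(Z_G(\mathbf{x};t))=t^2(q^{c_1}+q^{c_2})$ determines the unordered pair $\{c_1,c_2\}$, with an explicit count of the two bipartition classes of $C_\alpha$. Then one reads off which compositions give a class of size exactly $c_1$.

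\textbf{Step 1: the class sizes.} For $\alpha=(\alpha_1,\dots,\alpha_\ell)$, the spine vertices $v_i$ alternate between the two classes. Each leaf $w_{i,j}$ lies in the class opposite to $v_i$. Hence the class containing $v_1$ consists of the spine vertices $v_i$ with $i$ odd, together with the leaves attached to the $v_i$ with $i$ even. Its size is
\[
\#\{i\ \text{odd}\}+\sum_{i\ \text{even}}(\alpha_i-1)=\sum_{i\ \text{even}}\alpha_i+\bigl(\lceil \ell/2\rceil-\lfloor \ell/2\rfloor\bigr).
\]
Symmetrically, the other class has size $\sum_{i\ \text{odd}}\alpha_i+\bigl(\lfloor \ell/2\rfloor-\lceil \ell/2\rceil\bigr)$. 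The two sizes sum to $k$, as expected. This is just a cleaner restatement of \Cref{cor:recover-sizes-bipartition-caterpillar}.

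\textbf{Step 2: case analysis.} When $\ell$ is even, write $\alpha=(a_1,b_1,\dots,a_{\ell/2},b_{\ell/2})$. The two class sizes are $\sum b_i$ and $\sum a_i$. Since reversing an even-length composition swaps the roles of the $a$'s and the $b$'s, up to reversal we may assume that $\sum b_i$ is the smaller class, so $\sum b_i=c_1$. (If $c_1=c_2$, either labelling works.)

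When $\ell$ is odd, reversal preserves the parity of positions, so we cannot normalize in the same way. Instead we split according to which class is the smaller one. Write $\alpha=(a_1,b_1,\dots,b_{\lfloor\ell/2\rfloor},a_{\lceil\ell/2\rceil})$, with the $a$'s in odd positions.
\begin{itemize}
\item The class containing $v_1$ has size $\sum b_i+1$. If this class is the small one, then $\sum b_i=c_1-1$, which gives the first odd-$\ell$ form.
\item Otherwise the other class, of size $\sum a_i-1$, equals $c_1$. Relabelling the odd-position parts as $b$'s then gives the second form with $\sum b_i=c_1+1$.
\end{itemize}

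\textbf{Step 3: constraints.} Finally, I would note that the constraints $\alpha_1,\alpha_\ell\ge 2$ and $|\alpha|=k$ are simply inherited as conditions on the composition. The statement only asserts that every caterpillar in the class has one of these shapes, so no converse is needed, although it holds by the same computation. The hypothesis $c_1\ge 2$ only excludes stars and plays no role in the argument.

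\textbf{Main obstacle.} The only delicate point is the odd case. There I must get the $\pm1$ shifts right: one parity class has one more spine vertex than the other. I must also avoid wrongly invoking reversal to merge the two odd subcases, since reversal does not exchange odd and even positions when $\ell$ is odd.
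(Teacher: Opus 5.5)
Your proposal is correct and takes the paper's approach. The paper gives no separate proof: it justifies the proposition by noting that $\ps_2(Z_G(\mathbf{x};t))=t^2(q^{c_1}+q^{c_2})$ records the bipartition sizes and that the bipartition of a caterpillar alternates along the spine (as in the proof of \Cref{cor:recover-sizes-bipartition-caterpillar}). Your explicit class-size count and parity case split spell out those details correctly, including the $\pm 1$ shifts and the point that reversal swaps position parities only when $\ell$ is even.
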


 \begin{example} The list in Table~\ref{tab:ps2-class-example} contains all caterpillars with $k=8$ vertices in the $\ps_2$ class for  $t^2(q^3 + q^5)$, listed by number of vertices in the spine, $\ell$. Note that the maximum number of vertices in the spine is $\ell = 2(3)-1 = 5$.

\begin{table}[]
    \centering
    \scalebox{0.9}{\begin{tabular}{ccc c ccc}
\textbf{$\ell$} & \textbf{$\alpha$} & \textbf{caterpillar} & \hspace{3em} &
\textbf{$\ell$} & \textbf{$\alpha$} & \textbf{caterpillar}\\
\cline{1-3}
\cline{5-7} \rule{0pt}{1.5em}
$2$ & \boldtuple{3,5}         & \caterpillar{3,5} & \hspace{3em} &
$4$ & \boldtuple{2,1,3,2}     & \caterpillar{2,1,3,2} \\[1.7em]
\cline{1-3}     \rule{0pt}{1.5em}
$3$ & \boldtuple{2,2,4}       & \caterpillar{2,2,4} & \hspace{3em} &
    & \boldtuple{3,1,2,2}     & \caterpillar{3,1,2,2} \\[1.7em]
    & \boldtuple{3,2,3}       & \caterpillar{3,2,3} & \hspace{3em} &
    & \boldtuple{4,1,1,2}     & \caterpillar{4,1,1,2} \\[1.7em]
\cline{5-7}     \rule{0pt}{1.5em}
    & \boldtuple{2,4,2}       & \caterpillar{2,4,2} & \hspace{3em} &
$5$ & \boldtuple{2,1,1,1,3}   & \caterpillar{2,1,1,1,3} \\[1.7em]
\cline{1-3}     \rule{0pt}{1.5em}
    &                         &                     & \hspace{3em} &
    & \boldtuple{2,1,2,1,2}   & \caterpillar{2,1,2,1,2} \\[1.7em]
\cline{5-7}     \rule{0pt}{1.5em}

\end{tabular}}
    \caption{The $\ps_2$ class of caterpillars for $t^2(q^3+q^5)$ listed by number of vertices in the spine, $\ell$. The bipartition is represented by the filled and unfilled vertices.
    }
    \label{tab:ps2-class-example}
\end{table}

 \end{example}

The description given in Proposition~\ref{prop:ps2-classes-caterpillars} shows that there is at most one {bistar} within each $\ps_2$ class.
\begin{cor}
    Fix a number of vertices $k$. For all $2 \leq c \leq \lfloor \frac{k}{2} \rfloor$, there is a unique bistar $C_{(c, k-c)}$ in the $\ps_2$ class for $t^2(q^c + q^{k-c})$. 
\end{cor}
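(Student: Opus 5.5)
The plan is to reduce the claim to the bipartition sizes of a bistar and then solve for them. By \Cref{prop:basic-properties-psn(Z_G)} part (3), with $n=2$, every tree $T$ on $k$ vertices has
\[\ps_2(Z_T(\mathbf{x};t))=t^2(q^{c_1}+q^{c_2}),\]
where $c_1\le c_2$ are the sizes of its bipartition classes. So we must show two things: for each $2\le c\le \lfloor k/2\rfloor$ some bistar has bipartition sizes $\{c,k-c\}$, and only one bistar up to isomorphism does.

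For existence, consider the bistar $C_{(a,b)}$ with $a,b\ge 2$ and $a+b=k$. Its spine is $v_1v_2$. The vertex $v_1$ lies in the same class as the $b-1$ leaves of $v_2$, and $v_2$ lies with the $a-1$ leaves of $v_1$. The class sizes are therefore
\[1+(b-1)=b \quad\text{and}\quad 1+(a-1)=a.\]
This agrees with \Cref{cor:recover-sizes-bipartition-caterpillar} for $k=2$. Taking $\{a,b\}=\{c,k-c\}$ gives the bistar $C_{(c,k-c)}$. It is a valid caterpillar because $c\ge 2$ and $k-c\ge c\ge 2$. Its specialization is $t^2(q^{c}+q^{k-c})$, since $c\le k-c$.

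For uniqueness, suppose $C_{(a,b)}$ is any bistar in this $\ps_2$ class. Then $\{a,b\}=\{c,k-c\}$ as multisets, so $(a,b)$ is either $(c,k-c)$ or its reversal $(k-c,c)$. By \Cref{lem:caterpillars-isomorphic} these two give isomorphic caterpillars; the lemma applies since $k\ge 4$ and the length is $2$. Alternatively, uniqueness can be read off from the even case $\ell=2$ of \Cref{prop:ps2-classes-caterpillars}: there $b_1=c_1=c$ and $a_1=k-c$, up to reversal.

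There is no real obstacle here. The only care needed is the bookkeeping that a bistar's class sizes equal its composition entries, and the check that the range $2\le c\le\lfloor k/2\rfloor$ makes $(c,k-c)$ a valid bistar composition. For $c=1$ the same computation would force an entry equal to $1$, which is the star $\st_k$ and not a bistar; this is why the range starts at $2$.
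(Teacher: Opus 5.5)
Your proof is correct and is essentially the paper's argument: the paper reads the claim off the $\ell=2$ case of \Cref{prop:ps2-classes-caterpillars}, which rests on the same alternating-bipartition count that you carry out directly for a bistar. You also make the existence half explicit (checking that $(c,k-c)$ is a valid bistar composition with class sizes $c$ and $k-c$), which the paper leaves implicit.
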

The $\ps_2$ classes of caterpillars may contain multiple {elongated bistars}.

\begin{cor}\label{cor:ps2-class-el-bistars}
    Fix a number of vertices $k$ and values $c_1 \leq c_2$ such that $c_1+c_2=k$. The elongated bistars in the $\ps_2$ class for $t^2(q^{c_1}+q^{c_2})$ are of type 
     \begin{itemize}
        \item $(c_2 - \frac{r}{2}, 1^r, c_1-\frac{r}{2})$ with $r$ even, $0\leq r \leq 2c_1-4$, and
        \item $(a, 1^{2c_1-3}, b)$ with $2\leq b\leq a $ satisfying $a+b=c_2-c_1+3$. 
    \end{itemize}
 
\end{cor}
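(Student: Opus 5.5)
The plan is to compute directly the bipartition sizes of an arbitrary elongated bistar $(a,1^r,b)$ with $a\geq b\geq 2$, $r\geq 0$, and then solve for $(a,r,b)$. By the discussion preceding \Cref{prop:ps2-classes-caterpillars}, the $\ps_2$ class of a caterpillar is $t^2(q^{c_1}+q^{c_2})$, where $c_1\leq c_2$ are the sizes of its bipartition classes. So the corollary amounts to determining exactly which triples $(a,r,b)$ give the unordered pair $\{c_1,c_2\}$. As in the proof of \Cref{cor:recover-sizes-bipartition-caterpillar}, the spine $v_1,\dots,v_{r+2}$ alternates between the two classes. Each leaf lies in the class opposite its spine vertex. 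The only leaves are the $a-1$ leaves at $v_1$ and the $b-1$ leaves at $v_{r+2}$. I split on the parity of $r$.

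\emph{Case $r$ even.} Here the spine length $r+2$ is even, so $v_1$ and $v_{r+2}$ lie in opposite classes. The class of $v_1$ contains $(r+2)/2$ spine vertices together with the $b-1$ leaves of $v_{r+2}$, so it has size $b+r/2$. Symmetrically, the other class has size $a+r/2$. Since $a\geq b$, we get $c_1=b+r/2$ and $c_2=a+r/2$, hence $b=c_1-r/2$ and $a=c_2-r/2$. The constraint $b\geq 2$ is equivalent to $r\leq 2c_1-4$. Conversely, any even $r$ in this range yields a valid elongated bistar with $a\geq b\geq 2$, using $c_2\geq c_1$.

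\emph{Case $r$ odd.} Now $v_1$ and $v_{r+2}$ lie in the same class. That class consists only of the $(r+3)/2$ spine vertices at odd positions. The other class contains $(r+1)/2$ spine vertices plus all $a+b-2$ leaves, for a total of $a+b+(r-3)/2$. Because $a+b\geq 4$, the second size is at least $(r+5)/2$, which exceeds the first. Therefore $c_1=(r+3)/2$, which forces $r=2c_1-3$, and $c_2=a+b+c_1-3$, which gives $a+b=c_2-c_1+3$. Conversely, any $2\leq b\leq a$ with this sum produces the stated class sizes. The total number of vertices is then automatically $c_1+c_2=k$.

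The main point requiring care is in the odd case: correctly identifying which class is the smaller one, and hence that $r$ is forced to equal $2c_1-3$. Everything else is bookkeeping, together with checking that the converse constructions satisfy $a\geq b\geq 2$. One could instead specialize \Cref{prop:ps2-classes-caterpillars} to the composition $(a,1^r,b)$, but the direct count above is cleaner.
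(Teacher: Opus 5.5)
Your proof is correct and follows essentially the same route as the paper: compute the two bipartition class sizes of $(a,1^r,b)$ by alternating along the spine, split on the parity of $r$, and solve for $a$, $b$, $r$. Your version is slightly more complete than the paper's. For odd $r$ you justify that the class containing the two end spine vertices is the smaller one, and you check the converse, that every listed type actually lies in the class; the paper leaves both points implicit.
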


\begin{proof}
  
Suppose the elongated bistar ${(a, 1^r, b)}$ with $a\geq b$ has $$\psgn{{(a, 1^r, b)}}{2} = t^2(q^{c_1} + q^{c_2}).$$ 
    Let $(V_1, V_2)$ be the bipartition of ${(a, 1^r, b)}$ with $|V_1| = c_1, |V_2| = c_2$. 
    Since $a\geq b$, the $a-1$ leaves on the left-most spine vertex are in $V_2$ while the spine vertex itself is in $V_1$. 
     Note that the spine vertices must alternate between $V_1$ and $V_2$. 
     
     If $r$ is even, the right-most spine vertex is in $V_2$ and the $b - 1$ leaves on the last spine vertex are in $V_1$; therefore, $c_2=(a-1)+r/2+1$ so $a=c_2-r/2$, and $c_1=1+r/2+(b-1)$ so $b=c_1-r/2$ as claimed.
     If $r$ is odd, the right-most spine vertex is also in $V_1$ and the $b -1$ leaves on the last spine vertex are in $V_2$. Therefore,  $c_2=a+b-2+\frac{r+1}{2}$ while $c_1=\frac{r-1}{2}+2$. This implies that $r=2c_1-3$ and $a+b=c_2-c_1+3$ as claimed.
\end{proof}

The $\ps_2$ classes of caterpillars also contain multiple caterpillars of type $(a,b,c)$ for some $a\geq c\geq 2$ and $b \geq 1$. We summarize the consequences of Proposition~\ref{prop:ps2-classes-caterpillars} for caterpillars with three internal vertices in the following corollary. 

\begin{cor} \label{cor:ps2-class-3cat}
   Fix a number of vertices $k$ and values $c_1 \leq c_2$ such that $c_1 + c_2 = k$. The caterpillars with three internal vertices in the $\ps_2$ class for $t^2(q^{c_1} + q^{c_2})$ are of type $(a, c_1-1, c_2+1-a)$ or $(a, c_2 - 1, 
c_1+1-a)$.
\end{cor}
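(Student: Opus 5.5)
The plan is to apply \Cref{prop:basic-properties-psn(Z_G)} part (3) with $n=2$. For a connected bipartite graph this gives $\ps_2(Z_G(\mathbf{x};t))=t^2(q^{c_1}+q^{c_2})$, where $c_1\leq c_2$ are the sizes of the two parts of the unique bipartition. So a caterpillar lies in the $\ps_2$ class for $t^2(q^{c_1}+q^{c_2})$ exactly when its bipartition sizes, as an unordered pair, are $\{c_1,c_2\}$. The statement then reduces to computing the bipartition sizes of $C_{(a,b,c)}$ and solving for $b$ and $c$ in terms of $a$.

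First, I compute the bipartition of $C_{(a,b,c)}$, as in the proof of \Cref{cor:recover-sizes-bipartition-caterpillar} with $k=3$ odd. Membership alternates along the spine $v_1,v_2,v_3$, and every leaf lies opposite its spine vertex. One part therefore contains $v_1$, $v_3$ and the $b-1$ leaves of $v_2$, so it has size $b+1$. The other part contains $v_2$ together with the $a-1$ leaves of $v_1$ and the $c-1$ leaves of $v_3$, so it has size $a+c-1$. The two sizes add to $a+b+c=k$, consistent with $c_1+c_2=k$.

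Next, I match $\{b+1,\,a+c-1\}$ with $\{c_1,c_2\}$, which gives two cases. If $b+1=c_1$, then $b=c_1-1$ and $a+c-1=c_2$, so $c=c_2+1-a$; this is the type $(a,c_1-1,c_2+1-a)$. If $b+1=c_2$, then $b=c_2-1$ and $a+c-1=c_1$, so $c=c_1+1-a$; this is the type $(a,c_2-1,c_1+1-a)$. When $c_1=c_2$ the two cases coincide, which does no harm. Conversely, any composition of either form with first and last parts at least $2$ has bipartition sizes $\{c_1,c_2\}$ by the same count, so it does lie in the class. Hence the list is exactly the set of three-spine caterpillars in that class, with $a$ ranging over the values that keep the end parts at least $2$.

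There is no real obstacle here. The only points needing care are bookkeeping: remembering that $v_1$ and $v_3$ lie in the same part, and noting that reversal symmetry (we may assume $a\geq c$) only restricts which value of $a$ is listed and does not change the form of the types. I would also remark that this agrees with the odd-$\ell$ cases of \Cref{prop:ps2-classes-caterpillars} for $\ell=3$.
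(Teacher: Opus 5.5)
Your proposal is correct and takes essentially the same route as the paper: you read off the bipartition sizes $\{\{b+1,\,a+c-1\}\}$ of $C_{(a,b,c)}$ via \Cref{prop:basic-properties-psn(Z_G)}(3), then match them to $\{\{c_1,c_2\}\}$ in the two possible ways. Your added converse check, that every such composition with end parts at least $2$ actually lies in the class, goes slightly beyond what the paper's proof states, but it is harmless.
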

\begin{proof}
  Let $c_1$ be the smaller of the two sizes of the bipartition of $C_{(a,b,c)}$. We can see that
    \[c_1=\min\{a+c-1,b+1\}.\]
    If $c_1=a+c-1$, then $k=a+b+c=c_1+b+1$, so $b=k-c_1-1$. If $c_1=b+1$, then $b=c_1-1$, as desired.
\end{proof}

\section{Reconstructing elongated bistars}
\label{sec:bistars}
Throughout, we will often write $(a, 1^r, b)$ as shorthand for the elongated bistar $C_{(a, 1^r, b)}$.

To prove that $\psgn{(a, 1^r, b)}{3}$ distinguishes elongated bistars, we provide an explicit, deterministic algorithm recovering $(a, r, b)$ from $\psgn{(a, 1^r, b)}{3}$: Algorithm~\ref{alg:elongated-bistars}. The remainder of this section proves that this algorithm works.
\begin{algorithm}
    \caption{Recover $(a, r, b)$ from the polynomial $f = \ps_3(Z_{{(a, 1^r, b)}}(\mathbf{x}; t))$}\label{alg:elongated-bistars}
\begin{algorithmic}[1]
\Require A $(q,t)$-polynomial of the form $f = \ps_3(Z_{{(a, 1^r, b)}}(\mathbf{x}; t))$
\Ensure The triple $(a,r,b)$

\State $d_2 \gets  \mindeg ([t^2]f)$ and $D_2 \gets  \maxdeg ([t^2]f)$
\State $N \gets \frac{1}{2}(d_2 + D_2)$ \Comment{$N = $ total number of vertices}\\
 \LINECOMMENT{Gather special $q$-degrees from $f$}

\State $d_3 \gets  \mindeg([t^3]f)$\\ 

    \LINECOMMENT{From these degrees, recover $r$ (and thus also $z:= a + b$) and  $b$.}
    \If{$d_3 - d_2 \neq 1$ or $[q^{d_3}t^3]f > d_3 - 1$} \Comment{$r$ is even}
        \State $r \gets 2d_3 - 6$ and  $z \gets N - r$ 
        \State $b \gets 3 - (d_3 - d_2)$\\
    
    \Else\Comment{$r$ is odd}

        \State $r \gets 2d_3 - 5$ and $z \gets N - r$
        \If{$z = 4$}
        \State $b \gets 2$
        \Else \Comment{$a > 2$}
        \If{$[q^{d_3 + 1}t^3]f = 2N-3+\delta_{r\geq 2} + \binom{\lfloor r/2 + 1 \rfloor}{2}$}
        \State $b \gets 2$
        \Else
        \If{$z = 6$}
        \State $b \gets 3$
        \Else 
        \State $m \gets [q^{d_3 + 2}t^3]f - F(z - 2, r)$\Comment{See def. of $F$ in Equation \eqref{eqn:F-def}}
        \If{$m > \binom{z - 4}{2} + 1$} 
        \State $b \gets 3$
        \Else 
        \State $b \gets \min\left(\left\{x: \binom{z - x - 1}{2} + \binom{x - 1}{2} - m = 0\right\}\right)$
        \EndIf
        \EndIf
        \EndIf
        \EndIf\\
    \EndIf\\
    \LINECOMMENT{Determine $a$ and return the triple}
    \State $a \gets z - b$\\
    \Return{$(a, r, b)$}
\end{algorithmic}
\end{algorithm}

    \begin{theorem}\label{thm:recovering-elongated-bistars}
        Elongated bistars ${(a, 1^r, b)}$ can be recovered from $\ps_3 (Z_{{(a,1^r,b)}}(\mathbf{x};t))$ by \Cref{alg:elongated-bistars}.
    \end{theorem}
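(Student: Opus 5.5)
The plan is to verify, line by line, that every quantity Algorithm~\ref{alg:elongated-bistars} reads off $f=\ps_3(Z_{(a,1^r,b)}(\mathbf{x};t))$ equals an explicit function of $(a,r,b)$. Then I will check that each branch outputs the correct triple. Label the spine $v_0,\dots,v_{r+1}$, with $a-1$ leaves on $v_0$ and $b-1$ leaves on $v_{r+1}$, and use colors $\{0,1,2\}$ as in \eqref{eq:psnZG-interpretation}.

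\textbf{Step 1: the $t^2$ part.} By \Cref{prop:basic-properties-psn(Z_G)}(3) with $n=3$, $[t^2]f=\sum_{j_1\ne j_2}q^{c_1j_1+c_2j_2}$. Its minimum degree is $d_2=c_1$ and its maximum is $D_2=2c_2$ when $c_1\le c_2$. The algorithm's line $N=\tfrac12(d_2+D_2)$ therefore needs care. I will instead justify $N$ via \Cref{prop:basic-properties-psn(Z_G)}(1) applied to the full $f$, or check directly that it equals $c_1+c_2$ in the relevant normalization. The bipartition sizes themselves are given by the computation in the proof of \Cref{cor:ps2-class-el-bistars}. For $r$ even, $c_1=b+r/2$ and $c_2=a+r/2$. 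For $r$ odd, $c_1=(r+3)/2$ and $c_2=a+b+(r-3)/2$.

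\textbf{Step 2: $d_3$ and the parity test.} Next I will determine the minimum weight of a proper $3$-coloring that uses all three colors, together with the number of colorings attaining it. A cheapest coloring starts from the minimum-weight $2$-coloring: the smaller class gets color $1$ and the larger class gets $0$. The cost of forcing a third color is then minimized by a local modification. One option is to recolor a single vertex with color $2$. Another is to break the spine into two $2$-colored segments with swapped roles, using color $2$ at the junction or simply by flipping a tail.

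I expect the minimum to be roughly the number of spine vertices on the lighter side, giving $d_3=r/2+3$ for $r$ even and $d_3=(r+5)/2$ for $r$ odd. This matches the algorithm's formulas $r=2d_3-6$ and $r=2d_3-5$. The job is to show that these optima are achieved by recoloring the leaves of $v_{r+1}$ (or of $v_0$) to $0$ and then re-alternating the spine. I will then count the minimizers: $[q^{d_3}t^3]f$ compared with $d_3-1$, and $d_3-d_2$, which should equal $3-b$ for $r$ even. Showing $d_3-d_2\ne 1$ or $[q^{d_3}t^3]f>d_3-1$ exactly characterizes even $r$ is a finite check of these closed forms.

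\textbf{Step 3: recovering $b$ when $r$ is odd (the main obstacle).} Here $d_2$ and $d_3$ carry no information about how $z=a+b$ splits, so the algorithm reads higher coefficients. I plan to enumerate proper $3$-colorings of weight $d_3+1$ and $d_3+2$ that use three colors. I will decompose each coloring by the colors assigned to the two hubs $v_0,v_{r+1}$ and the color pattern along the path. The contribution of the path is counted by colorings of a path with prescribed weight. This gives terms like $\binom{\lfloor r/2+1\rfloor}{2}$, from placing two "defects" along the alternating spine, and the function $F(z-2,r)$ of \eqref{eqn:F-def}. The contribution of the leaves is counted by choosing which leaves of $v_0$ and $v_{r+1}$ take the nonzero colors. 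That is where $\binom{a-1}{2}+\binom{b-1}{2}=\binom{z-b-1}{2}+\binom{b-1}{2}$ arises.

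After subtracting the $b$-independent part $F(z-2,r)$, I obtain $m$. I must show that $m=\binom{z-b-1}{2}+\binom{b-1}{2}$ for $b\ge 4$, while $b=2,3$ produce the distinguished values tested earlier. The $b=2$ value is the $d_3+1$ coefficient $2N-3+\delta_{r\ge2}+\binom{\lfloor r/2+1\rfloor}{2}$, and $b=3$ is signalled by $m>\binom{z-4}{2}+1$. The edge cases $z=4$ and $z=6$ are forced by $a\ge b\ge 2$. Finally, $x\mapsto\binom{z-x-1}{2}+\binom{x-1}{2}$ is symmetric about $x=z/2$ and strictly decreasing for $x\le z/2$. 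So the minimal root of the equation is exactly $b$, since $b\le a$. This coefficient bookkeeping, especially getting small $r$ right and keeping the $b\in\{2,3\}$ cases from colliding with the generic formula, is where I expect most of the work. I would organize it as a table of cases by hub colors and verify it against the SageMath implementation for small parameters.
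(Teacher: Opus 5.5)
Your plan follows the paper's proof: check each line of the algorithm using $d_2$ (from the bipartition), $d_3=\lfloor r/2\rfloor+3$, the count $[q^{d_3}t^3]f$ for the parity test, and, for odd $r$, the coefficients of $q^{d_3+1}t^3$ and $q^{d_3+2}t^3$ computed by splitting on the colors of the two hubs. The paper also finishes, as you do, with the anti-unimodality of $\binom{z-x-1}{2}+\binom{x-1}{2}$.

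There is one error to fix. The top degree of $[t^2]f=\sum_{j_1\neq j_2}q^{c_1j_1+c_2j_2}$ is $D_2=c_1+2c_2$ (smaller class colored $1$, larger class colored $2$), not $2c_2$. Hence $\tfrac12(d_2+D_2)=c_1+c_2=N$, and line 2 is correct exactly as written. No workaround via the full $f$ is needed, and such a workaround would not verify the algorithm as stated anyway.

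Also, $d_3$ should not be framed as the minimal cost of perturbing the optimal $2$-coloring. For even $r$ and $b\ge 4$ one has $d_3=r/2+3<d_2=r/2+b$, so the third color can lower the weight. The lower bound follows directly from three facts, as in the paper:
\begin{itemize}
\item the inner spine needs at least $\lfloor r/2\rfloor$ nonzero colors;
\item each hub star needs a nonzero vertex;
\item some vertex must be colored $2$.
\end{itemize}
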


To prove \Cref{thm:recovering-elongated-bistars}, we will need several lemmas, some of which are in the appendix. We review some notation used to state them.

Recall that we write $[q^it^j]$ to mean the coefficient $[q^it^j]\psgn{G}{n}$ when $G$ and $n$ are clear from the context. We will also write $\mindeg f(q)$ to denote the smallest exponent $i$ such that $q^i$ appears in polynomial $f(q)$ with non-zero coefficient. 

For an elongated bistar $C_{(a,1^r,b)}$, we let its spine be the path $v_A-s_1-\cdots-s_r-v_B$. The set of leaves of $v_A$ ($v_B$, respectively) is denoted $A$ ($B$, respectively). 

First, we recover $\floor{\frac{r}{2}}$.
\begin{lemma}\label{lem:EB-t3-mindeg}
    For all $r\geq 0$ and $a\geq b\geq 2$, we have
\[\mindeg [t^3]\ps_3(Z_{{(a,1^r,b)}}(\mathbf{x}; t))=\left\lfloor \frac{r}{2}\right\rfloor+3.\]
Thus, the quantity $\lfloor \frac{r}{2}\rfloor$ is recoverable from $\psgn{(a,1^r,b)}{3}$.
\end{lemma}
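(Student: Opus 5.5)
The plan is to read $\mindeg [t^3]\ps_3(Z_{(a,1^r,b)}(\mathbf{x};t))$ as the minimum weight of a proper coloring $\kappa:V\to\{0,1,2\}$ that uses all three colors. This is just \eqref{eq:psnZG-interpretation} with $n=3$. We then prove matching lower and upper bounds of $\lfloor r/2\rfloor+3$.

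\emph{Lower bound.} Let $\kappa$ be any proper coloring using all three colors, and let $S=\{v:\kappa(v)\neq 0\}$.
\begin{itemize}
\item Since color $0$ is used on an independent set, $S$ is a vertex cover of the tree.
\item The weight of $\kappa$ is $\sum_{v\in S}\kappa(v)\ge |S|+1$, because each vertex of $S$ contributes at least $1$ and at least one vertex has color $2$.
\item To bound $|S|$, we exhibit a matching of size $\lfloor r/2\rfloor+2$. Take one edge from $v_A$ to a leaf in $A$ (possible since $a\ge 2$). Take one edge from $v_B$ to a leaf in $B$ (possible since $b\ge 2$). Add $\lfloor r/2\rfloor$ disjoint edges $\{s_1,s_2\},\{s_3,s_4\},\dots$ of the inner path $s_1-\cdots-s_r$.
\item A vertex cover must contain at least one endpoint of each matching edge, so $|S|\ge\lfloor r/2\rfloor+2$.
\end{itemize}
Hence the weight is at least $\lfloor r/2\rfloor+3$.

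\emph{Upper bound.} We construct an explicit coloring attaining this value.
\begin{itemize}
\item Set $S=\{v_A,v_B\}\cup\{s_2,s_4,\dots\}$, the $s_i$ with even index. Then $|S|=\lfloor r/2\rfloor+2$.
\item $S$ is a vertex cover. Every leaf edge meets $v_A$ or $v_B$. The edges $\{v_A,s_1\}$ and $\{s_r,v_B\}$ meet $v_A$ and $v_B$ respectively. Each inner edge $\{s_i,s_{i+1}\}$ contains an even index.
\item Color every vertex outside $S$ with $0$. This includes all leaves, so color $0$ is used.
\item Color every vertex of $S$ with $1$, except $v_B$, which gets color $2$.
\item The coloring is proper. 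The only possible adjacency inside $S$ is $v_B$ with $s_r$ when $r$ is even, or $v_A$ with $v_B$ when $r=0$. In both cases $v_B$ has color $2$ and its neighbor has color $1$.
\item All three colors appear, and the weight is $(|S|-1)+2=\lfloor r/2\rfloor+3$.
\end{itemize}

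Combining the two bounds gives the formula. Recoverability of $\lfloor r/2\rfloor$ is then immediate: the polynomial $[t^3]\ps_3$ is read off directly from $\ps_3(Z_{(a,1^r,b)}(\mathbf{x};t))$. The only step needing care is the lower bound, specifically justifying the vertex-cover reformulation and the matching. The edge cases $r=0$ and $r=1$ should be checked separately against the general construction.
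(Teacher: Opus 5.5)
Your proof is correct and follows essentially the same route as the paper. The upper bound is an explicit coloring of weight $\lfloor r/2\rfloor+3$. The lower bound counts the vertices forced to be non-$\mathbf{0}$ (one leaf edge at each of $v_A$ and $v_B$, plus $\lfloor r/2\rfloor$ disjoint inner-spine edges) and adds one more unit for the required $\mathbf{2}$; your vertex-cover/matching phrasing is a cleaner version of the paper's argument, which instead splits into cases according to whether a $\mathbf{2}$ appears on the inner spine.
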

In proofs, we use bold numbers to represent colors for clarity: $\mb{0,1,2}.$
\begin{proof}
    Consider the proper coloring that assigns $\mb{2}$ to $v_A$, assigns $\mb{1}$ to $v_B$, $\mb{0}$ to the leaves of $A$ and $B$, and alternates colors on the $r$-spine between $\mb{0}$ and $\mb{1}$ such that $s_r$ is colored $\mb{0}$. This forms a 3-coloring of weight $2+1+\lfloor r/2\rfloor.$ We claim this is minimal among 3-colorings. 
    
    Since we must use a $\mb{2}$ at least once, the only way to get a smaller weight coloring is to use fewer $\mb{1}$'s. However, the $r$-spine requires a minimum of $\lfloor r/2\rfloor$  non-$\mb{0}$ colors; say this $r$-spine coloring has $t$ $\mb{2}$'s  and $\lfloor r/2\rfloor-t$ $\mb{1}$'s, (yielding $r$-spine weight $\floor{r/2}+t$). We then consider compatible colorings of $v_A$ and $v_B$ and the vertices of $A$ and $B$. The rest of the graph needs at least two non-$\mb{0}$'s. If $t=0$, then the rest of the graph needs a $\mb{2}$ in particular, so this part of the coloring has weight at least 3. Hence, the total weight is at least $\lfloor r/2\rfloor +3$. If $t>0$, then the rest of the graph has weight at least 2, so the total weight is at least $\lfloor r/2\rfloor+t+2\geq \floor{r/2}+3.$ As this results in total weight at least $\lfloor{r/2}\rfloor+3$ in either case, the claim is proved.
\end{proof}

Next, we prove a lemma that is a step towards recovering $r$.
\begin{lemma}\label{lem:EB-t2-mindeg}
    For all $r\geq 0$ and $a\geq b\geq 2$, we have 
    \[\mindeg [t^2]\ps_3(Z_{(a,1^r,b)}(\mathbf{x};t))=\begin{cases}
        \lfloor\frac{r}{2}\rfloor + b&\text{if $r$ is even},\\
        \lfloor\frac{r}{2}\rfloor + 2&\text{if $r$ is odd}.\\
    \end{cases}\]
\end{lemma}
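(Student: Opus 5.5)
The plan is to read the answer directly off \Cref{prop:basic-properties-psn(Z_G)} part (3), with $n=3$, after computing the bipartition of the elongated bistar explicitly.

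\textbf{Step 1: reduce to the smaller bipartition class.} The elongated bistar $C_{(a,1^r,b)}$ is a connected bipartite graph, so by \Cref{prop:basic-properties-psn(Z_G)}(3),
\[[t^2]\ps_3(Z_{(a,1^r,b)}(\mathbf{x};t))=\sum_{\substack{j_1,j_2\in\{0,1,2\}\\ j_1\neq j_2}}q^{c_1j_1+c_2j_2},\]
where $c_1\le c_2$ are the sizes of the two bipartition classes. The exponents that occur are $c_1, c_2, 2c_1, 2c_2, c_1+2c_2$ and $2c_1+c_2$. Since $1\le c_1\le c_2$, the smallest of these is $c_1$. It therefore suffices to show that $c_1$ equals the claimed expression.

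\textbf{Step 2: compute the bipartition.} Label the spine $v_A-s_1-\cdots-s_r-v_B$. Membership alternates along this path, and each leaf lies in the class opposite to its spine vertex.

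\emph{Case $r$ even.} The path has an even number $r+2$ of vertices, so $v_A$ and $v_B$ lie in opposite classes.
\begin{itemize}
\item The class of $v_A$ contains $v_A$, the vertices $s_2,s_4,\dots,s_r$, and the $b-1$ leaves in $B$. Its size is $1+r/2+(b-1)=r/2+b$.
\item The other class contains the $a-1$ leaves in $A$, the vertices $s_1,s_3,\dots,s_{r-1}$, and $v_B$. Its size is $(a-1)+r/2+1=a+r/2$.
\end{itemize}
Because $a\ge b$, the smaller class has size $c_1=r/2+b=\lfloor r/2\rfloor+b$.

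\emph{Case $r$ odd.} Now $v_A$ and $v_B$ lie in the same class.
\begin{itemize}
\item That class contains $v_A$, $v_B$ and $s_2,s_4,\dots,s_{r-1}$. Its size is $2+(r-1)/2=\lfloor r/2\rfloor+2$.
\item The other class contains the leaves in $A\cup B$ and $s_1,s_3,\dots,s_r$. Its size is $(a-1)+(b-1)+(r+1)/2=a+b-2+\lceil r/2\rceil$.
\end{itemize}
Since $a+b\ge 4$, the second size is at least $2+\lceil r/2\rceil\ge 2+\lfloor r/2\rfloor$. Hence $c_1=\lfloor r/2\rfloor+2$.

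\textbf{Main obstacle.} There is no real obstacle; the only care needed is the bookkeeping. One must identify correctly which endpoint each alternating spine vertex is paired with according to the parity of $r$. One must also verify, using $a\ge b\ge 2$, which class is the smaller one, so that the minimum $q$-degree is attained by the claimed class.
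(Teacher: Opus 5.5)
Your proposal is correct and follows the same route as the paper: apply \Cref{prop:basic-properties-psn(Z_G)}(3) to reduce to the size $c_1$ of the smaller bipartition class, count the two classes separately for $r$ even and $r$ odd, and use $a\ge b\ge 2$ to identify the smaller one. Your explicit check that $c_1$ is the smallest exponent among the $c_1j_1+c_2j_2$ is a detail the paper leaves implicit.
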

\begin{proof}
    By \Cref{prop:basic-properties-psn(Z_G)}(3), $\mindeg [t^2]\ps_3(Z_{{(a,1^r,b)}}(\mathbf{x};t))$ is $c_1$,
     the size of the smaller part of the bipartition of the vertices of ${(a,1^r,b)}$. The two parts of the bipartition have sizes
\[\begin{cases}1+\lfloor r/2\rfloor +(b-1)\quad \text{and}\quad  a-1+\lfloor r/2\rfloor +1 &\text{if $r$ even,}\\
    1+\lfloor r/2\rfloor +1\quad \text{and}\quad  a-1+\lceil r/2\rceil + b-1 &\text{if $r$ odd.}
    \end{cases}\]
    Since $a\geq b \geq 2$, the claim follows.
\end{proof} 
 
 Let $d_2=\mindeg [t^2]\ps_3(Z_{{(a,1^r,b)}}(\mathbf{x};t))$ and $d_3=\mindeg [t^3]\ps_3(Z_{{(a,1^r,b)}}(\mathbf{x};t))$ for an unknown elongated bistar $(a,1^r,b)$. Since we can recover the value of $\lfloor r / 2\rfloor$, it is enough to  recover the parity of $r$. The proof will rely on a computation (\Cref{lem:r_by_2_plus_3}) found in \Cref{sec:Append.A}.
\begin{lemma}\label{lem:recover-r-parity}
    If $d_3-d_2\neq 1$ or $[q^{d_3}t^{3}]>d_3-1$, then $r$ is even. Otherwise, $r$ is odd.
\end{lemma}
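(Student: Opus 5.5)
We argue by cases on the parity of $r$, using \Cref{lem:EB-t3-mindeg} and \Cref{lem:EB-t2-mindeg}. These give $d_3=\lfloor r/2\rfloor+3$, while $d_2=\lfloor r/2\rfloor+b$ if $r$ is even and $d_2=\lfloor r/2\rfloor+2$ if $r$ is odd. So $d_3-d_2=3-b$ for $r$ even and $d_3-d_2=1$ for $r$ odd. For $r$ odd, then, the first condition $d_3-d_2\neq 1$ always fails. For $r$ even it fails exactly when $b=2$. The statement therefore reduces to two claims about the coefficient $[q^{d_3}t^3]$:
\begin{itemize}
\item[(i)] if $r$ is odd, then $[q^{d_3}t^3]\le d_3-1$;
\item[(ii)] if $r$ is even and $b=2$, then $[q^{d_3}t^3]> d_3-1$.
\end{itemize}
Given both, "$d_3-d_2\neq1$ or $[q^{d_3}t^3]>d_3-1$" holds precisely when $r$ is even, which is the claim.

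To handle (i) and (ii), I would count the proper $3$-colorings with colors $\mb{0},\mb{1},\mb{2}$ that use all three colors and have minimal weight $\lfloor r/2\rfloor+3$. I expect this count to be exactly what \Cref{lem:r_by_2_plus_3} in the appendix computes, as a function of $r,a,b$. Failing that, I would do it directly by refining the argument in the proof of \Cref{lem:EB-t3-mindeg}.

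In a minimal coloring, exactly one vertex receives $\mb{2}$, and the number of non-$\mb{0}$ vertices is essentially forced. The internal path $s_1,\dots,s_r$ carries $\lfloor r/2\rfloor$ nonzero colors, except when the unique $\mb{2}$ lies on the path. The vertices $v_A$ and $v_B$ together with their leaves contribute weight $3$ or $2$ accordingly.

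The enumeration splits by the location of the $\mb{2}$:
\begin{itemize}
\item on $v_A$ or $v_B$;
\item on a leaf, which is only possible when $b=2$, since then putting the $\mb{2}$ on the single leaf of $v_B$ and $\mb{1}$ on $v_B$ costs little;
\item on some $s_i$.
\end{itemize}
In each case we count the minimum-weight $\{\mb{0},\mb{1}\}$-patterns on the segments of the path into which the remaining vertices are split. A path of even length has a unique minimal pattern. A path of odd length $m$ has $(m+1)/2$ choices of where to place the "double zero", and this parity effect is what separates the two cases.

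For $r$ odd, I expect the total count to come out as roughly $\lfloor r/2\rfloor+2=d_3-1$ or less. For $r$ even with $b=2$, the extra configurations (the $\mb{2}$ on the leaf of $v_B$, and several positions of the $\mb{2}$ on the path) should push the count above $d_3-1$.

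The main obstacle is this exact enumeration. One has to be careful with the small boundary cases $r=0,1$, with $a=b=2$ (where the symmetry between $v_A$ and $v_B$ doubles certain colorings), and with confirming that no coloring using two $\mb{2}$'s reaches minimal weight. I would first write down the count for $r$ even, $b=2$, where the extra leaf configurations give a clean lower bound. Then I would get the upper bound for $r$ odd by bounding each case separately.
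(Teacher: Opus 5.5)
Your proposal is correct and is essentially the paper's own proof. The paper likewise combines \Cref{lem:EB-t3-mindeg} and \Cref{lem:EB-t2-mindeg} to get $d_3-d_2=3-b\le 1$ for $r$ even and $d_3-d_2=1$ for $r$ odd, then reads off from \Cref{lem:r_by_2_plus_3} that $[q^{d_3}t^3]=\lfloor r/2\rfloor+2=d_3-1$ when $r$ is odd, but $3\lfloor r/2\rfloor+4$ (or $4\lfloor r/2\rfloor+6$ if $a=b=2$) when $r$ is even and $b=2$; because the odd case is an exact equality, a direct enumeration would need the precise count rather than a rough upper estimate, so citing the appendix lemma is the right move.
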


\begin{proof} Comparing \Cref{lem:EB-t3-mindeg} and \Cref{lem:EB-t2-mindeg}, we have $d_3-d_2\leq 1$ in general. If $d_3-d_2 < 1$, then $r$ is even from \Cref{lem:EB-t2-mindeg}. Suppose $d_3-d_2=1$. From \Cref{lem:r_by_2_plus_3}, we have 
    \[[q^{d_3}t^3]=\begin{cases}
        4\floor{r/2}+6 & \text{if $r$ is even and $a=b=2$,}\\
        3\floor{r/2}+4 & \text{if $r$ is even and $a>b=2$,}\\
        \floor{r/2}+2 & \text{if $r$ is odd.}
    \end{cases}\]
Hence, if $[q^{d_3}t^3]>d_3-1=\lfloor r/2\rfloor +2$, then $r$ must be even. Otherwise, $r$ is odd.
    \end{proof}

\begin{cor} \label{lem:recover-r}
For any elongated bistar ${(a,1^r,b)}$, we can determine the value of $r$ from $\ps_3 (Z_{{(a,1^r,b)}}(\mb{x};t))$.
\end{cor}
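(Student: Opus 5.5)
The corollary follows by combining \Cref{lem:EB-t3-mindeg} with \Cref{lem:recover-r-parity}. Write $f=\ps_3(Z_{(a,1^r,b)}(\mathbf{x};t))$.

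First, we read off $d_3=\mindeg [t^3]f$. By \Cref{lem:EB-t3-mindeg}, $d_3=\lfloor r/2\rfloor+3$, so $\lfloor r/2\rfloor=d_3-3$ is determined by $f$.

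Next, we read off $d_2=\mindeg[t^2]f$, which is nonzero by \Cref{prop:basic-properties-psn(Z_G)}(3). We also read off the coefficient $[q^{d_3}t^3]f$. Both depend only on $f$, not on any knowledge of $(a,r,b)$. \Cref{lem:recover-r-parity} then says that $r$ is even exactly when $d_3-d_2\neq 1$ or $[q^{d_3}t^3]f>d_3-1$, and odd otherwise. So the parity of $r$ is a function of $f$ alone.

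Finally, we combine the two pieces. If $r$ is even, then $r=2\lfloor r/2\rfloor=2d_3-6$. If $r$ is odd, then $r=2\lfloor r/2\rfloor+1=2d_3-5$. These are exactly the assignments made in lines of \Cref{alg:elongated-bistars}.

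The only point needing care is that \Cref{lem:recover-r-parity} gives a genuine dichotomy that can be evaluated from $f$ without knowing $r$ in advance. That lemma already handles the case analysis, using \Cref{lem:r_by_2_plus_3} to show the two branches cannot overlap: when $d_3-d_2=1$, the coefficient $[q^{d_3}t^3]f$ equals $\lfloor r/2\rfloor+2=d_3-1$ for odd $r$, and strictly exceeds it for even $r$. So no further obstacle remains, and the corollary follows.
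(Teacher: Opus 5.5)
Your proposal is correct and follows the paper's own proof: you get the parity of $r$ from \Cref{lem:recover-r-parity} and $\lfloor r/2\rfloor = d_3-3$ from \Cref{lem:EB-t3-mindeg}, which gives $r=2d_3-6$ or $r=2d_3-5$. Your remark that the two branches of the parity test cannot overlap is also accurate: when $d_3-d_2=1$ with $r$ even, \Cref{lem:EB-t2-mindeg} forces $b=2$, and \Cref{lem:r_by_2_plus_3} then gives a coefficient strictly greater than $\lfloor r/2\rfloor+2$.
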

\begin{proof}
Use \Cref{lem:recover-r-parity} to recover the parity of $r$. Then use \Cref{lem:EB-t3-mindeg} to recover $r$: if $r$ is even, then $r=2d_3-6$; if $r$ is odd, then $r=2d_3-5.$
    \end{proof}

In Appendix~\ref{sec:Append.A}, we make a few more computations of higher coefficients of $\ps_3(Z_{{(a, 1^r, b)}}(\mathbf{x}; t))$ that help us to recover the values of $a$ and $b$ and thus prove \Cref{thm:recovering-elongated-bistars}.

    \begin{proof}[Proof of \Cref{thm:recovering-elongated-bistars}]
    We verify  our proposed recovery algorithm (Algorithm~\ref{alg:elongated-bistars}). When relevant, we include the line numbers from the algorithm.

    First, observe the total number of vertices $N$ (line 2) is correct by \Cref{prop:basic-properties-psn(Z_G)}.

      Lines 8 and 9 (determining precisely when $r$ is even and recovering its value in this case) follow from \Cref{lem:recover-r} and \Cref{lem:EB-t3-mindeg}. Line 10 (recovering $b$ when $r$ is even) then follows from \Cref{lem:EB-t2-mindeg}.

      Lines 12 and 13 (concluding that $r$ is odd and determining its value) also follow from \Cref{lem:recover-r} and \Cref{lem:EB-t3-mindeg}. When $r$ is odd, determining $b$ can be done as follows.

      If $z := a + b =  4$, it must be that $b = 2$, since $a \geq b$, justifying lines 14-15. Otherwise, we move to line 16.
      
      Lines 17 and 18 (determining that $b = 2$ from $[q^{d_3 + 1}t^3]f$) follows from  \Cref{lem:elongated-bistar-c1-odd}. Otherwise, the same lemma tells us $b > 2$ and we move to line 19.

      Lines 20-21 are similar to 14-15. Finally, the ``else'' case (lines 22 - 27) is proved within the proof of \Cref{lem:recovering-a-b-r-odd}.

      In all cases, by the end of line 33, we know the values of $r$, $b$, and $z$. Since $a = z - b$, we have recovered the elongated bistar.
      \end{proof}

\section{Reconstructing caterpillars with 3 internal vertices}
\label{sec:3-cat}

In this section, we show that $\ps_3(Z_G(\mathbf{x};t))$ distinguishes caterpillars with 3 internal vertices, which are caterpillars of type $(a,b,c)$ for some $a\geq c\geq 2$ and $b \geq 1$. We do so by providing an explicit reconstruction algorithm (Algorithm~\ref{alg:recover-abc}). Throughout this section, we will often write $(a, b, c)$ as a shorthand for the caterpillar $C_{(a, b, c)}.$

\begin{algorithm}
\caption{Recover $(a,b,c)$ from the polynomial $f= \psgn{(a,b,c)}{3}$}
\label{alg:recover-abc}
\begin{algorithmic}
\Require A polynomial $f = \psgn{(a,b,c)}{3}$
\Ensure The triple $(a,b,c)$
\State $d_2 \gets  \mindeg ([t^2]f)$ and $D_2 \gets  \maxdeg ([t^2]f)$
\State $N \gets \frac{1}{2}(d_2 + D_2)$ \Comment{$N = $ total number of vertices}
\State multiset $M=\{\{a+c-1,\;b+1\}\} \gets \{\{d_2, N - d_2\}\}$.
\State $c_{33}\gets [q^3t^3]f$,  $c_{43}\gets [q^4t^3]f$ and $c_{53}\gets [q^5t^3]f$.

\If{$c_{33}\neq 0$}
    \Comment{$b=1$}
    \State Recover $(a,c)$ from the formulas in Proposition~\ref{prop:b1 recovery} using $c_{53}$.
    \State \Return $(a,1,c)$
\EndIf

\Comment{$b>1$}
\If{$c_{43}>1$}
    \Comment{$\min\{a,b,c\}=2$}
    \State Recover $(a,b,c)$ using the case $\min\{a,b,c\}=2$ in Corollary~\ref{prop:recovering_from_m2}.
    \State \Return $(a,b,c)$
\Else
    \Comment{$\min\{a,b,c\}\ge 3$}
    \State $(b,s)\gets$  output of Algorithm~\ref{alg:midends} with input ${f,M}$ \Comment{$s=a+c$}
    \State Determine $m=\min\{a,b,c\}$ and multiplicity using Remark~\ref{rem.min}.
    \If{$m$ occurs with multiplicity $3$}
        \State \Return $(m,m,m)$
    \ElsIf{$m$ occurs with multiplicity $2$}
        \If{$b = m$}
            \State \Return $(N-2m,m,m)$
        \Else
            \State \Return $(m,N-2m,m)$
        \EndIf
    \Else \Comment{multiplicity = 1}
        \If{$b > m$}
        \State \Return $(N-b-m,b,m)$
        \Else
        \State Use Proposition~\ref{prop:recover from m3} to recover $(a,b,c)$
        \EndIf
    \EndIf
\EndIf
\end{algorithmic}
\end{algorithm}

\begin{theorem}\label{thm:MainTheorem}
$\ps_3(Z_G(\mathbf{x};t))$ distinguishes caterpillars with 3 internal vertices. 
Moreover, the values $(a, b, c)$ are recoverable from $\psgn{(a,b,c)}{3}$ via \Cref{alg:recover-abc}.
\end{theorem}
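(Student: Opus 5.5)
The proof will follow \Cref{alg:recover-abc} line by line, in the same way the proof of \Cref{thm:recovering-elongated-bistars} followed \Cref{alg:elongated-bistars}. Each branch will be justified by an explicit count of low-weight $3$-colorings of $C_{(a,b,c)}$, with $a\ge c\ge 2$ and $b\ge1$. Write the spine as $v_1-v_2-v_3$, and let $A$, $B$, $C$ be the leaf sets of $v_1$, $v_2$, $v_3$, of sizes $a-1$, $b-1$, $c-1$.

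\textbf{Global data.} By \Cref{prop:basic-properties-psn(Z_G)}, $N=\tfrac12(d_2+D_2)$ is the number of vertices. By \Cref{cor:recover-sizes-bipartition-caterpillar}, the multiset $\{a+c-1,\,b+1\}$ is recovered, and by \Cref{cor:ps2-class-3cat} this pins $(a,b,c)$ down to a small family, for which the remaining task is to separate members.

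\textbf{Coefficients of $q^3t^3$, $q^4t^3$, $q^5t^3$.} A weight-$w$ coloring using all three colors has at most $w-2$ nonzero-colored vertices, and the nonzero vertices must form a vertex cover. Since $\{v_1,v_3\}$ is a vertex cover exactly when $b=1$, the only colorings of weight $3$ are then $v_1,v_3\in\{\mb1,\mb2\}$ with distinct colors and everything else colored $\mb0$. Hence $c_{33}\neq0$ iff $b=1$.

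For $b\ge2$, weight-$4$ three-color colorings need a cover of size at most $3$ with total weight $4$. The covers are the whole spine colored $\{\mb1,\mb1,\mb2\}$ suitably, or a spine vertex replaced by its leaves when that spine vertex has exactly one leaf, which gives the $\min=2$ detection $c_{43}>1$.

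I will then tabulate $c_{43}$ and $c_{53}$ as explicit piecewise formulas in $a,b,c$. This is the content of the cited Proposition~\ref{prop:b1 recovery} and Corollary~\ref{prop:recovering_from_m2}, and I will verify that they are injective on the relevant families. For $b=1$ the injectivity comes down to $c_{53}$ together with $a+c=N-1$. For $\min=2$ it comes down to which of $a,b,c$ equals $2$ plus the bipartition multiset.

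\textbf{Case $\min\{a,b,c\}\ge3$.} Here I would invoke Algorithm~\ref{alg:midends}. It separates $b$ from $s=a+c$ by deciding which element of $M$ is $b+1$, using a higher coefficient in which colorings putting many $\mb0$'s on one side dominate. Remark~\ref{rem.min} then reads off $m=\min$ and its multiplicity from the first $q$-degree at which leaves other than those forced by the spine can be nonzero. The multiplicity cases are immediate arithmetic with $N$ and $b$. The remaining case, $m=c<b$ with multiplicity one, uses Proposition~\ref{prop:recover from m3}: $c$ is read off from a coefficient (I expect $[q^{m+3}t^3]f$ or similar) counting colorings in which all of $C$ is colored nonzero, and then $a=s-c$.

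\textbf{Main obstacle.} The hardest step is the final separation of $(a,c)$ with $a+c$ fixed when $\min\ge3$. The low-degree coefficients are symmetric-looking in $a$ and $c$ until the degree reaches about $c$, so one must show that the first asymmetric coefficient actually has a jump depending on $c$, and handle ties such as those in the $b=1$ case carefully.
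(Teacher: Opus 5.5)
Your skeleton (walk through \Cref{alg:recover-abc} and justify each branch by counting low-weight $3$-colorings) is the paper's. However, in the branch $\min\{a,b,c\}=m\ge 3$ you single out the wrong case as the hard one, and your proposed tool would not handle the case that is actually hard.

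If $m$ occurs once and $m=c<b$, nothing remains to be done. \Cref{prop:q_m_plus_2} places $m$ at $c$, $b$ is already known from \Cref{alg:midends}, and $(a,b,c)=(N-b-m,b,m)$. The case that needs new input is $m=b<c\le a$. There $b$ and $a+c$ are known but the split of $a+c$ is not, and your proposal never treats it.

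A coefficient near $q^{m+3}$ counting colorings with all of $C$ nonzero cannot settle this case. When $c\ge b+3$, every such $3$-coloring has weight at least $b+5$. In fact $(10,3,5)$, $(9,3,6)$, $(8,3,7)$ have the same $[t^2]$ and the same $[q^it^3]$ for every $i\le 6=b+3$.

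The paper instead goes to $[q^{b+4}t^3]$ (\Cref{prop:q_m_plus_4}). For the spine colorings $\langle\mathbf{1},\mathbf{0},\mathbf{2}\rangle$ and $\langle\mathbf{2},\mathbf{0},\mathbf{1}\rangle$, all of $B$ is forced nonzero. Distributing the two remaining units of weight yields $\bar a+\binom{\bar b+\bar c}{2}$ and $\bar c+\binom{\bar a+\bar b}{2}$ respectively. For fixed $\bar a+\bar c$, the quantity $\binom{\bar a+\bar b}{2}+\binom{\bar c+\bar b}{2}$ determines $\{\bar a,\bar c\}$ by \Cref{lem:antiunimodal binomials}. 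The correction $\psi(a,c;b)$, which occurs when $c\in\{b+1,b+2\}$, must still be separated from the generic case by the inequalities in the proof of \Cref{prop:recover from m3}.

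Relatedly, every coefficient is symmetric in $a$ and $c$, since $C_{(a,b,c)}\cong C_{(c,b,a)}$. What you need is a coefficient that is not a function of $a+c$ alone. That coefficient sits at degree $b+4$, which may lie far below $c$, not at degree about $c$.

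Your sketch of the case $\min\{a,b,c\}=2$ is also inaccurate. The bipartition multiset together with knowing which parts equal $2$ does not determine the caterpillar, and $c_{43},c_{53}$ do not suffice either. For example, $(4,3,2)$ and $(3,4,2)$ share the multiset $\{\{5,4\}\}$ and $c_{43}=3$. They are separated only by $c_{53}$ ($14$ versus $17$; \Cref{lem:subcase5}).

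Worse, $(5,2,5)$ and $(6,2,4)$ share the multiset $\{\{9,3\}\}$, have $c_{43}=4$, and have $c_{53}=2(\bar a+\bar c)+5=21$. The paper separates them only with $[q^6t^3]$ ($60$ versus $63$). This again goes through a term $\binom{\bar a}{2}+\binom{\bar c}{2}$ and \Cref{lem:antiunimodal binomials} (\Cref{lem:subcase4iii}).
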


As a consequence, we obtain a considerable strengthening of \cite{ALISTEPRIETOZAMORA,LoeblSereni} in the case when the caterpillar has $3$ spine vertices.

\begin{cor}
The chromatic symmetric function on three variables $X_G(x_1,x_2,x_3)$ distinguishes and recovers caterpillars with 3 spine vertices.
\end{cor}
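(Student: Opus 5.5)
The plan is to deduce the corollary from \Cref{thm:MainTheorem} by showing that $X_G(x_1,x_2,x_3)$ determines $\ps_3(Z_G(\mathbf{x};t))$. The point to keep in mind is that $X_G(x_1,x_2,x_3)$ is a genuine three-variable polynomial, not just its principal specialization. So it still remembers how its monomials split according to how many variables they involve, and that split is exactly what the variable $t$ records.

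First, I would write $X_G(\mathbf{x})=\sum_{\lambda\vdash d}c_\lambda m_\lambda$, where $d=|V(G)|$. Setting $x_j=0$ for $j>3$ kills every $m_\lambda$ with $\ell(\lambda)>3$. This gives
\[X_G(x_1,x_2,x_3)=\sum_{\substack{\lambda\vdash d\\ \ell(\lambda)\le 3}}c_\lambda\, m_\lambda(x_1,x_2,x_3).\]
For distinct partitions $\lambda$ with $\ell(\lambda)\le 3$, the polynomials $m_\lambda(x_1,x_2,x_3)$ have disjoint monomial supports, each nonempty. They are therefore linearly independent. Hence $X_G(x_1,x_2,x_3)$ determines $d$ (as its total degree) and every coefficient $c_\lambda$ with $\ell(\lambda)\le 3$. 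Concretely, $c_\lambda$ is the coefficient of $x_1^{\lambda_1}x_2^{\lambda_2}x_3^{\lambda_3}$.

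Second, by \Cref{prop:basic-properties-of-zg}(2), $Z_G(\mathbf{x};t)=\sum_\lambda t^{\ell(\lambda)}c_\lambda m_\lambda$. Specializing $x_i=q^{i-1}$ for $i\le 3$ and $x_j=0$ for $j>3$ gives
\[\ps_3(Z_G(\mathbf{x};t))=\sum_{\substack{\lambda\vdash d\\ \ell(\lambda)\le 3}}t^{\ell(\lambda)}c_\lambda\, m_\lambda(1,q,q^2).\]
This is computable from the data recovered in the first step. Equivalently, $\ps_3(Z_G)$ is obtained from $X_G(x_1,x_2,x_3)$ by replacing each monomial $x_1^{e_1}x_2^{e_2}x_3^{e_3}$ with $t^{\#\{i:e_i>0\}}q^{e_2+2e_3}$. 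This is the three-variable shadow of the map $L_d$.

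Finally, suppose $G=C_{(a,b,c)}$ and $H=C_{(a',b',c')}$ are caterpillars with three spine vertices and $X_G(x_1,x_2,x_3)=X_H(x_1,x_2,x_3)$. Then $\ps_3(Z_G)=\ps_3(Z_H)$, and \Cref{thm:MainTheorem} gives $G\cong H$. For the recovery statement, I would apply \Cref{alg:recover-abc} to the polynomial produced in the second step. No step is a real obstacle here: the whole difficulty sits in \Cref{thm:MainTheorem}. The only point needing care is the linear independence of the truncated monomial symmetric polynomials. Without it the $t$-grading could not be read off, and $\ps_3(X_G)$ alone would not suffice, as \Cref{ex:345 354} shows.
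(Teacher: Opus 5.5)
Your proposal is correct and matches the argument the paper leaves implicit; the paper states the corollary without proof as a consequence of \Cref{thm:MainTheorem}. Because the monomials of $X_G(x_1,x_2,x_3)$ record how many colors each proper $3$-coloring uses, $X_G(x_1,x_2,x_3)$ determines $\ps_3(Z_G(\mathbf{x};t))$ via $x_1^{e_1}x_2^{e_2}x_3^{e_3}\mapsto t^{\#\{i:e_i>0\}}q^{e_2+2e_3}$, and \Cref{thm:MainTheorem} with \Cref{alg:recover-abc} then finishes, as you say.
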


 Our general strategy is to identify certain coefficients of   $\psgn{(a,b,c)}{3}$ to reveal information about $a,b,c$. Throughout, assume that $a\geq c\geq 2$ and $b \geq 1$, and we set $\Bar{a}=a-1$, $\Bar{b}=b-1$ and $\Bar{c}=c-1$. We start by examining the coefficient of $t^2$: this tells us the sizes of the two independent sets in the unique bipartition of the tree.

\begin{lemma} \label{lem:ac1b2}
The multiset of values $\{\{\bar{a}+\bar{c}+1,\bar{b}+2\}\}$ is determinable from $\psgn{(a,b,c)}{3}.$
\end{lemma}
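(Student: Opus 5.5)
The plan is to read the bipartition sizes of $C_{(a,b,c)}$ off the $t^2$ coefficient, using \Cref{prop:basic-properties-psn(Z_G)}(3) with $n=3$. First identify the bipartition. The spine is $v_1-v_2-v_3$, and the $\bar{a}$ leaves of $v_1$, the $\bar{b}$ leaves of $v_2$, and the $\bar{c}$ leaves of $v_3$ hang off it. Since $C_{(a,b,c)}$ is a tree, it is connected and bipartite, so its bipartition is unique. Along the spine the classes alternate: $v_1$ and $v_3$ lie in one class, $v_2$ in the other, and every leaf lies in the class opposite its spine vertex. One class is therefore $\{v_2\}$ together with the leaves of $v_1$ and $v_3$, of size $1+\bar{a}+\bar{c}$. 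The other is $\{v_1,v_3\}$ together with the leaves of $v_2$, of size $2+\bar{b}$. These are the two values in the statement (equivalently $a+c-1$ and $b+1$, as in \Cref{cor:ps2-class-3cat}).

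Next, write $c_1\le c_2$ for the two class sizes. By \Cref{prop:basic-properties-psn(Z_G)}(3) with $n=3$,
\[
[t^2]\ps_3(Z_{(a,b,c)}(\mathbf{x};t))=\sum_{\substack{j_1,j_2\in\{0,1,2\}\\ j_1\ne j_2}} q^{c_1j_1+c_2j_2}.
\]
The exponents that occur are $c_2,\ 2c_2,\ c_1,\ c_1+2c_2,\ 2c_1,\ 2c_1+c_2$. Because $c_1\le c_2$ and both are positive, the minimum exponent is $d_2=c_1$ and the maximum is $D_2=c_1+2c_2$. This polynomial is nonzero, since the tree is bipartite and at least two colors are available.

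Finally, recover the sizes: $c_1=d_2$ and $c_2=(D_2-d_2)/2$. This is exactly the formula $c_2=\frac{M-(n-2)m}{n-1}$ from the proof of \Cref{cor:recover-sizes-bipartition-caterpillar} with $n=3$. Alternatively, \Cref{prop:basic-properties-psn(Z_G)}(1) gives the total vertex count $N=a+b+c=c_1+c_2$, so $c_2=N-d_2$, which is the computation used in \Cref{alg:recover-abc}. Either way, the unordered pair $\{\{c_1,c_2\}\}$, which is the multiset $\{\{\bar{a}+\bar{c}+1,\bar{b}+2\}\}$, is determined by $\ps_3$.

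Only the bookkeeping step identifying the two classes needs care, and it is immediate. Note that we cannot tell which of the two values is $\bar{b}+2$, which is why the statement only claims the multiset.
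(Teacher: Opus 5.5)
Your proof is correct and takes essentially the paper's approach. The paper simply cites \Cref{cor:recover-sizes-bipartition-caterpillar}, which reads the bipartition sizes off $[t^2]\ps_n(Z_G(\mathbf{x};t))$ via \Cref{prop:basic-properties-psn(Z_G)}(3); you have unpacked that same computation for $n=3$, with the two classes of $C_{(a,b,c)}$ identified explicitly.
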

\begin{proof}
This follows immediately from \Cref{cor:recover-sizes-bipartition-caterpillar}.
\end{proof}
The remainder of the information necessary to recover $(a,b,c)$ comes from various $q$-degrees in the coefficient $[t^3]\psgn{(a,b,c)}{3}$.  Our strategy is to inductively assume a lower bound for $a,b,c$ and then find a degree $s$ such that  $[q^st^3]\psgn{(a,b,c)}{3}$ verifies that one of these values meets the lower bound. With this minimum value  $m:=\min\{a,b,c\}$ determined, the other values and their placement in the composition $(a,b,c)$ can be recovered from determined coefficients.

For the remainder of this section, we will understand $(a,b,c)$ as fixed and write $[q^rt^s]\psgn{(a,b,c)}{3}$ as just $[q^rt^s]$, and $[t^s]\psgn{(a,b,c)}{3}$ as just $[t^s]$ for brevity when there is no possibility of confusion. We will state formulas for various $[q^rt^s]$, the proofs of which can be found in Appendix~\ref{appendix:ps-qrt3}. Finally, throughout the remainder of the section, we will make extensive use of the Kronecker delta function $\delta_P$, which takes value 1 if proposition $P$ holds and 0 otherwise.

\begin{lemma}\label{lem:coeff-of-q3-t3}
   For all caterpillars of type $(a,b,c)$, we have
  \[ 
  [q^3t^3]=2\delta_{b = 1}=\begin{cases}
  2&\text{if $b=1$},\\
   0 &\text{if $b>1$.}
   \end{cases}
   \]
\end{lemma}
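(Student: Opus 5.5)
We count the proper colorings $\kappa$ of $C_{(a,b,c)}$ with colors in $\{\mb{0},\mb{1},\mb{2}\}$ that use all three colors and have weight exactly $3$. Since $\mb{2}$ must appear and contributes $2$, and $\mb{1}$ must appear and contributes at least $1$, a weight-$3$ coloring with three colors has exactly one vertex colored $\mb{2}$, exactly one vertex colored $\mb{1}$, and every other vertex colored $\mb{0}$. Properness then forces the vertices colored $\mb{0}$ to form an independent set. Equivalently, the two vertices $u$ (colored $\mb{2}$) and $w$ (colored $\mb{1}$) must together form a vertex cover of the tree. Moreover, $u\neq w$, and they are allowed to be adjacent since their colors differ. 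So $[q^3t^3]$ equals the number of ordered pairs $(u,w)$ of distinct vertices whose union $\{u,w\}$ covers every edge.

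The next step is to enumerate such pairs using the spine $v_1-v_2-v_3$ and the leaves. Each $v_1$ and $v_3$ has at least one leaf, since $a,c\geq 2$. An edge from a leaf $\ell$ to $v_i$ is covered only if $v_i\in\{u,w\}$ or $\ell\in\{u,w\}$.

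First suppose $b=1$, so $v_2$ has no leaves. Then the only edges are the leaf edges at $v_1$ and $v_3$ together with the two spine edges. Taking $\{u,w\}=\{v_1,v_3\}$ covers everything: all leaf edges, and both spine edges through their endpoints $v_1$ and $v_3$. Any other pair fails. If $v_1\notin\{u,w\}$, then all $a-1\geq 1$ leaves of $v_1$ must lie in the pair, and also $v_2$ or $v_3$ must. When $a-1\geq 2$ this is impossible with only two vertices. When $a-1=1$ we need the pair $\{\ell,v_3\}$ or $\{\ell,v_2\}$, but then the edge $v_1v_2$ (respectively the leaf edges at $v_3$) is left uncovered. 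The same argument applies symmetrically with $v_3$. This gives exactly one unordered pair and hence $2$ ordered pairs, so $[q^3t^3]=2$.

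Now suppose $b>1$. Then $v_2$ has a leaf $\ell_2$, and the edges $\ell_1v_1$, $\ell_2v_2$, $\ell_3v_3$ form a matching of size $3$, where $\ell_1,\ell_3$ are leaves of $v_1,v_3$. Any vertex cover must contain at least one endpoint of each of these three disjoint edges, so it has at least $3$ vertices. Hence no pair works and $[q^3t^3]=0$.

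The only delicate point is the case analysis for $b=1$ when $a$ or $c$ equals $2$, where a single leaf could substitute for its spine vertex. The matching-style argument handles this cleanly: the edge sets at $v_1$ and $v_3$ each force a vertex among $v_1$ or its leaves and $v_3$ or its leaves. These two choices must both be spine vertices, because the spine edges $v_1v_2$ and $v_2v_3$ must also be covered, and $v_2$ cannot be in the pair when both slots are already forced.
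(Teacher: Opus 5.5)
Your proof is correct, but it takes a different route from the paper's. The paper fixes the spine coloring $\langle \mathbf{k_a},\mathbf{k_b},\mathbf{k_c}\rangle$ and discards the four spine colorings that would use $\mathbf{1}$ or $\mathbf{2}$ twice. It then checks the remaining eight one by one: in six of them some leaf set ($A\cup C$, $A$, or $C$) must avoid $\mathbf{0}$, which pushes the weight above $3$, and $\langle \mathbf{2},\mathbf{0},\mathbf{1}\rangle$, $\langle \mathbf{1},\mathbf{0},\mathbf{2}\rangle$ each contribute one coloring exactly when $B=\varnothing$.

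You instead observe that, for any graph on at least three vertices, $[q^3t^3]\ps_3(Z_G(\mathbf{x};t))$ is twice the number of two-element vertex covers. You then settle $b>1$ at once, because the matching $\ell_1v_1,\ell_2v_2,\ell_3v_3$ forces every vertex cover to have at least three vertices, and you settle $b=1$ by a short forcing argument. Your route is shorter, applies to every graph, and explains why the coefficient vanishes. The paper's spine-by-spine bookkeeping is clumsier here, but it is the template the paper reuses for $[q^4t^3]$, $[q^5t^3]$, $[q^6t^3]$, $[q^{m+2}t^3]$ and $[q^{b+4}t^3]$. There several vertices carry nonzero colors, and a plain count of small vertex covers no longer suffices.

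When you write it up, lead with the argument from your last paragraph. The leaf edges at $v_1$ and $v_3$ force one vertex from each of the disjoint sets $\{v_1\}\cup\{\text{leaves of }v_1\}$ and $\{v_3\}\cup\{\text{leaves of }v_3\}$. So $v_2$ is excluded, and the spine edges then force $v_1$ and $v_3$. By comparison, your earlier phrase ``also $v_2$ or $v_3$ must'' is looser than needed, since the edge $v_1v_2$ already forces $v_2$; the conclusion is unaffected.
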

In other words, \Cref{lem:coeff-of-q3-t3} says that we can determine whether $b = 1$ from $\ps_3(Z_{(a, b, c)}(\mathbf{x};t))$.  The proof of Lemma~\ref{lem:coeff-of-q3-t3} is in Appendix~\ref{appendix:coeff-of-q3-t3}.

\begin{lemma}\label{cor:coeff-t3-q4-list} 
      For all caterpillars of type $(a,b,c)$, if $[q^3t^3] = 0$, then 
     \begin{align*}
         [q^4t^3] =1 + 2\delta_{c = 2} + 3\delta_{b = 2} + 5 \delta_{a = 2}\delta_{c = 2}= \begin{cases}
             11 & \text{ if } a = b = c = 2,\\
             8 & \text{ if } a = c =  2, b \geq 3,\\
             6 & \text{ if } b = c = 2, a \geq 3,\\
             4 & \text{ if } b = 2, a \geq 3, c \geq 3,\\
             3 & \text{ if } c = 2, a \geq 3, b \geq 3,\\
             1 & \text{ if } a, b, c \geq 3.
         \end{cases}
     \end{align*}
     \end{lemma}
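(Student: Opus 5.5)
The plan is to count the relevant colorings directly. By \Cref{lem:coeff-of-q3-t3}, the hypothesis $[q^3t^3]=0$ means $b\geq 2$. Label the spine $u-v-w$, and write $A$, $B$, $C$ for the leaf sets of $u$, $v$, $w$; these have sizes $a-1\geq c-1\geq 1$ and $b-1\geq 1$. By \eqref{eq:psnZG-interpretation}, $[q^4t^3]$ counts proper colorings $\kappa:V\to\{\mb{0},\mb{1},\mb{2}\}$ of weight $4$ that use all three colors.

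\textbf{Reduction to vertex covers.} A coloring of this kind uses at least one $\mb{2}$ and at least one $\mb{1}$. The only way to reach weight exactly $4$ is to have exactly one vertex colored $\mb{2}$, exactly two colored $\mb{1}$, and all others colored $\mb{0}$. Since the $\mb{0}$-class must be independent, the set $S$ of three nonzero vertices must be a vertex cover of the tree. In addition, the two $\mb{1}$-vertices must be nonadjacent, while the $\mb{2}$-vertex is unconstrained within $S$ because it is the only vertex of its color. So
\[
[q^4t^3]=\sum_{S}\#\{\text{nonadjacent pairs in } S\},
\]
where the sum runs over vertex covers $S$ of size $3$.

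\textbf{Enumerating covers of size $3$.} Each leaf edge at $u$ must be covered, so $S$ contains either $u$ or all of $A$; the same holds for $v,B$ and for $w,C$. Since $|S|=3$, a leaf set can replace its spine vertex only when it is a singleton. The spine edges $uv$ and $vw$ must also be covered. I will list the possibilities and count nonadjacent pairs in each.
\begin{itemize}
\item $\{u,v,w\}$ is always a cover. Its only nonadjacent pair is $\{u,w\}$, giving $1$.
\item When $a=2$, the set $\{x_u,v,w\}$ (with $A=\{x_u\}$) is a cover. Its nonadjacent pairs are $\{x_u,v\}$ and $\{x_u,w\}$, giving $2$. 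Symmetrically, when $c=2$ the set $\{u,v,x_w\}$ gives $2$.
\item When $b=2$, the set $\{u,x_v,w\}$ is a cover, and its three vertices are pairwise nonadjacent, giving $3$.
\item When $a=c=2$, the set $\{x_u,v,x_w\}$ covers both spine edges through $v$ and is pairwise nonadjacent, giving $3$.
\item Any set that replaces both $v$ and one of $u,w$ by leaves leaves $uv$ or $vw$ uncovered, so it is not a cover.
\end{itemize}

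\textbf{Assembling the formula.} Summing the contributions gives
\[
1+2\delta_{c=2}+2\delta_{a=2}+3\delta_{b=2}+3\delta_{a=2}\delta_{c=2}.
\]
Because $a\geq c$, the condition $a=2$ forces $c=2$, so $\delta_{a=2}=\delta_{a=2}\delta_{c=2}$. This yields $1+2\delta_{c=2}+3\delta_{b=2}+5\delta_{a=2}\delta_{c=2}$. The case table then follows by substituting each case.

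The main point of care is the exhaustiveness of the cover enumeration. The key observation is that each spine vertex is either in $S$ or replaced by its entire leaf set, and a replacement fits in $|S|=3$ only for a singleton leaf set.
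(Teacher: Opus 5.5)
Your argument is correct and reaches the paper's formula through a different decomposition.

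The paper fixes the spine colors $\langle \mathbf{k_a},\mathbf{k_b},\mathbf{k_c}\rangle$, runs through the twelve proper spine colorings, and counts the weight-$4$ leaf completions of each. This is the same template it uses for every coefficient in the appendix. You instead fix the support $S$ of the nonzero colors. The weight forces the color multiset $\{\mathbf{2},\mathbf{1},\mathbf{1}\}$, so $S$ is a $3$-element vertex cover, and the coloring is determined by choosing a nonadjacent pair in $S$ to receive $\mathbf{1}$.

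The two bookkeepings agree term by term. Your cover $\{u,x_v,w\}$ (when $b=2$), with its three pairs, is exactly the paper's $\langle\mathbf{1},\mathbf{0},\mathbf{1}\rangle$, $\langle\mathbf{1},\mathbf{0},\mathbf{2}\rangle$ and $\langle\mathbf{2},\mathbf{0},\mathbf{1}\rangle$ contributions. Your cover $\{x_u,v,x_w\}$ accounts for $\langle\mathbf{0},\mathbf{2},\mathbf{0}\rangle$ together with the two colorings under $\langle\mathbf{0},\mathbf{1},\mathbf{0}\rangle$.

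Your route has two advantages: it needs only five cases, and the reduction to vertex covers holds for any graph with more than three vertices. Its exhaustiveness rests on a slot count: each spine vertex contributes at least one element of $S$ (itself or its whole nonempty leaf set), so $|S|=3$ leaves no room for extra leaves. State that count explicitly, since your list never addresses covers containing a spine vertex together with some of its own leaves.

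The paper's spine-coloring template has its own advantage: it scales to $[q^5t^3]$, $[q^6t^3]$ and beyond. There, neither the support size nor the color pattern on the support is fixed by the weight, and a support-based count would become cumbersome.
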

The proof of Lemma~\ref{cor:coeff-t3-q4-list} is in Appendix~\ref{appendix:coeff-of-q4-t3}. From this, it follows that we can recover the following information about $\min\{a,b,c\}$ from $\psgn{(a,b,c)}{3}$ just from two coefficients.
\begin{itemize}
\item If $[q^3t^3]=2$, then $\min\{a,b,c\}=1$.
\item If $[q^3t^3]=0$ and $[q^4t^3]\neq 1$, then $\min\{a,b,c\}=2$.
\item If $[q^3t^3]=0$ and $[q^4t^3]=1$, then $\min\{a,b,c\}\geq 3$.
\end{itemize}
Now we can break up the recovery based on the values of $[q^3t^3]$ and $[q^4t^3]$, and hence based on the value of $m=\min\{a,b,c\}$. In all cases, the coefficient $[q^{m+2}t^3]$ will determine the position of any part of the composition $(a,b,c)$ that is equal to $m$. Once we have determined the position of any $m$, the coefficient $[q^{m+4}t^3]$ will determine the values of the other parts. Another key part of the argument is the following basic (``anti-'')unimodality lemma about binomial coefficients.
\begin{lemma} \label{lem:antiunimodal binomials}
    For any integers $n\geq 1$ and $0\leq r \leq n$, the quantity
\[\binom{n-r}{2}+\binom{r}{2}\text{ is }\begin{cases}
    \text{ strictly decreasing for $0\leq r \leq \frac{n}{2} $, and} \\
    \text{ strictly increasing for $\frac{n}{2} \leq r \leq n$.}
\end{cases}\]

\end{lemma}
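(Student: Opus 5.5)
We study $g(r)=\binom{n-r}{2}+\binom{r}{2}$ for integers $0\le r\le n$ through its forward difference $g(r+1)-g(r)$. Since $\binom{m+1}{2}-\binom{m}{2}=m$ for every integer $m\ge 0$,
\[
g(r+1)-g(r)=\Bigl[\tbinom{r+1}{2}-\tbinom{r}{2}\Bigr]-\Bigl[\tbinom{n-r}{2}-\tbinom{n-r-1}{2}\Bigr]=r-(n-r-1)=2r+1-n,
\]
valid for $0\le r\le n-1$. (Alternatively, expand to get $g(r)=r^2-nr+\binom{n}{2}$, a convex parabola with vertex at $r=n/2$; the difference computation gives the same conclusion directly over the integers.)

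Next we read off monotonicity from the sign of $2r+1-n$. For the decreasing half, we need $g(r+1)<g(r)$ whenever both $r$ and $r+1$ lie in $[0,n/2]$. The condition $r+1\le n/2$ gives $2r+1-n\le -1<0$, so $g$ is strictly decreasing on the integers in $[0,n/2]$. For the increasing half, take $r\ge n/2$ with $r+1\le n$. Then $2r+1-n\ge 1>0$, so $g$ is strictly increasing on the integers in $[n/2,n]$.

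The only point needing care is the middle when $n$ is odd. The consecutive pair $r=(n-1)/2,\ (n+1)/2$ has difference $0$, since $g$ is symmetric under $r\mapsto n-r$. This pair does not lie entirely within either interval $[0,n/2]$ or $[n/2,n]$, so it does not contradict strictness. I would note this explicitly: strictness is claimed only within each closed half-interval, and the two middle values tie. This boundary check is the main obstacle, though it is minor; no earlier results of the paper are needed.
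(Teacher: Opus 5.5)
Your proof is correct and is essentially the paper's argument. The paper rewrites the sum via $\binom{n-r}{2}+\binom{r}{2}+r(n-r)=\binom{n}{2}$ as the monic quadratic $r^2-nr+\binom{n}{2}$ with vertex at $r=n/2$. Your forward difference $2r+1-n$ is the discrete derivative of that quadratic, and it makes the integer-level strictness explicit, including the tie between $r=\frac{n-1}{2}$ and $r=\frac{n+1}{2}$ for odd $n$.
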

\begin{proof}
   Recall that  for $0\leq r\leq n$ we have
  \[\binom{n-r}{2}+\binom{r}{2}+r(n-r)=\binom{n}{2},\]
  or in other words
   \[\binom{n-r}{2}+\binom{r}{2}=r^2-nr+\binom{n}{2}.\]
   Then, observe that the right hand side is a monic quadratic polynomial in $r$ with minimum at $r=n/2$. 
\end{proof}

\subsection{Min = 1}
Assume $[q^3t^3]=2$ and hence that $\min\{a,b,c\}=1=b$. Since we have used the coefficient of $q^{m+2}t^3=q^{3}t^3$ to determine the position of $m=1$, we then use the coefficient of $q^{m+4}t^3=q^5t^3$ to determine the other values $a$ and $c$ of the caterpillar $(a,1,c).$
\begin{proposition}
{\label{prop:unified_q5t3}}
For all caterpillars of type $(a,b,c)$, we have
\begin{align*}
    [q^5t^3] &= b + \left[ 2(\bar{a}+\bar{c}) + \binom{\bar{a}}{2} + \binom{\bar{c}}{2}\right]\delta_{b=1} + (2\bar{a}+2\bar{c}+3)\delta_{b=2} + 4\delta_{b=3} \\
    &\quad+3(b+1)\delta_{a=2} + (a+b)\delta_{c=2} + 4\delta_{a=3}\delta_{c=2} + 2\delta_{a=3} + 2\delta_{c=3}.
\end{align*}
\end{proposition}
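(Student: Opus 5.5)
We count proper colorings $\kappa:V\to\{\mb{0},\mb{1},\mb{2}\}$ of $C_{(a,b,c)}$ that use all three colors and have weight $\sum_v\kappa(v)=5$. Label the spine $v_A-v_B-v_C$, with leaf sets $A,B,C$ of sizes $\bar a,\bar b,\bar c$. Every such coloring has weight $5=n_1+2n_2$, where $n_i$ is the number of vertices of color $\mb{i}$. Since all three colors occur, $n_2\ge1$, which leaves three cases:
\[(n_1,n_2)\in\{(3,1),\ (1,2)\}.\]
We also need $n_1\ge1$, and $n_2=0$ is excluded. So the plan is to enumerate two families. Family I has exactly one vertex colored $\mb{2}$ and three colored $\mb{1}$. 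Family II has two vertices colored $\mb{2}$ and one colored $\mb{1}$. In both families all remaining vertices are colored $\mb{0}$. The set of non-$\mb{0}$ vertices must therefore be a vertex cover of the tree, and it has only $4$ or $3$ elements. This forces most of the spine to be non-$\mb{0}$, and the count reduces to a finite case analysis on which spine vertices are colored $\mb{0}$.

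First I would split according to the spine coloring. If a spine vertex is colored $\mb{0}$, then all of its leaves and its spine neighbours must be non-$\mb{0}$. This costs $\bar a$, $\bar b+2$ or $\bar c$ non-$\mb{0}$ vertices respectively, which is only possible when the corresponding part is small. This is exactly where the indicator terms $\delta_{a=2},\delta_{a=3},\delta_{c=2},\delta_{c=3},\delta_{b=1},\delta_{b=2},\delta_{b=3}$ will come from. Concretely, the cases are as follows.
\begin{enumerate}
\item No spine vertex is colored $\mb{0}$. Then all leaves are $\mb{0}$, and the spine is properly colored with three non-$\mb{0}$ vertices of total weight $5$, i.e.\ colors $\{\mb1,\mb2,\mb2\}$. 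The only proper option is the pattern $\mb2\mb1\mb2$, and it uses all three colors.
\item Only $v_B$ is colored $\mb{0}$. Then the $\bar b$ leaves of $B$ are non-$\mb{0}$ and $v_A,v_C$ are non-$\mb{0}$, with total weight $5$. This case is where the terms depending on $b$ alone come from: $b$, the $b=1$ bracket, and $\delta_{b=2},\delta_{b=3}$. Within it, the weight of $v_A,v_C$ can be reduced by coloring some leaves of $A$ or $C$ instead. I expect the binomials $\binom{\bar a}{2},\binom{\bar c}{2}$ and the terms $2(\bar a+\bar c)$ to arise in the $b=1$ subcase. There one spine end may be colored $\mb0$ with all its leaves non-$\mb0$, or the end vertices may carry $\mb1$ while some leaves carry small colors. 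A careful listing of which leaves receive $\mb1$ or $\mb2$ produces these counts.
\item $v_A$ is colored $\mb0$ (and symmetrically $v_C$). This forces $\bar a+1$ non-$\mb0$ vertices, so it needs $a\le 3$ or so. It yields the terms $3(b+1)\delta_{a=2}$, $2\delta_{a=3}$, and, for the case $c$ small as well, $(a+b)\delta_{c=2}$, $4\delta_{a=3}\delta_{c=2}$ and $2\delta_{c=3}$. The asymmetry between the $a$ and $c$ terms should be traced to the normalization $a\ge c$. For instance, when $c=2$ and $a$ is larger, a coloring with $v_C=\mb0$ forces its single leaf and $v_B$ to be non-$\mb0$, and the remaining freedom gives a count linear in $a+b$.
\end{enumerate}

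Rather than derive the unified formula in one sweep, I would write an explicit count in each regime: $b=1$; $b=2$; $b=3$; $b\ge4$, each crossed with $a,c\in\{2,3,\ge4\}$. I would then check that every regime agrees with the stated expression. The arithmetic within each case is elementary, since it amounts to choosing which one or two vertices receive $\mb2$ and which receive $\mb1$. I would cross-check the result against small computed examples, such as $(5,4,3)$, where the formula gives $4+2\delta_{c=3}=6$, and against the elongated-bistar computations of the appendix.

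The main obstacle I expect is the bookkeeping in the overlapping small cases, for example $a=c=2$ or $b\le 3$ together with $c=2$. Several cases then simultaneously allow a $\mb0$ spine vertex, and some colorings are double counted or fail to use all three colors. Colorings using only $\mb0,\mb1$ never have weight $5$ with the right structure here, but colorings using only $\mb0,\mb2$ must be excluded, and they can occur when $n_1=0$. To handle this, I would organize the count by the set $S$ of non-$\mb0$ vertices, which is a vertex cover of size $3$ or $4$ containing an independent-set-compatible assignment. I would enumerate all covers of size at most $4$ of the caterpillar directly, and only then assign colors $\mb1,\mb2$ to $S$ properly with the prescribed multiplicities. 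This avoids the overlap issue.
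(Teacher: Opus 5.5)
Your setup matches the paper's. You split by the spine coloring and by the two weight patterns $\mb{2}+\mb{2}+\mb{1}$ and $\mb{2}+\mb{1}+\mb{1}+\mb{1}$, while the paper runs through all twelve proper spine colorings one by one. However, your Case 1 is wrong as stated.

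\textbf{The error in Case 1.} If no spine vertex is $\mb{0}$, it does not follow that all leaves are $\mb{0}$. The spine coloring $\lan \mb{1},\mb{2},\mb{1}\ran$ has weight $4$. Coloring any single leaf of $B$ (whose neighbour $v_B$ is $\mb{2}$) with $\mb{1}$ gives a proper coloring of weight $5$ that uses all three colors. In your own terms, this is Family I with non-$\mb{0}$ set equal to the spine plus one leaf of $B$, a vertex cover of size $4$ that you overlooked. These $\bar b$ colorings, together with the single coloring $\lan \mb{2},\mb{1},\mb{2}\ran$ with all leaves $\mb{0}$, are exactly where the term $b=\bar b+1$ comes from.

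\textbf{Why Case 2 cannot supply the $b$ term.} You attribute that term to Case 2, but this cannot work. If $v_B=\mb{0}$, then $v_A$, $v_C$ and all $\bar b$ leaves of $B$ must be non-$\mb{0}$. For $b\ge 4$ that is at least $5$ non-$\mb{0}$ vertices, more than the maximum $n_1+n_2=4$. So Case 2 contributes only when $b\le 3$, which is the source of the $\delta_{b=1}$ bracket and the $\delta_{b=2},\delta_{b=3}$ terms. As written, your case analysis gives $1+0+2=3$ for $(5,4,3)$, not the correct value $6$ that you quote. Your own cross-check would therefore expose the gap.

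\textbf{How to repair it.} Your fallback of listing all vertex covers $S$ of size $3$ or $4$ and then assigning $\mb{1},\mb{2}$ properly on $S$ would find the missing colorings, since $S=\{v_A,v_B,v_C,w\}$ with $w\in B$ is such a cover. Carrying that enumeration out in Case 1 fixes the count. Two smaller slips do not affect the result:
\begin{itemize}
\item Putting $\mb{0}$ on $v_A$ forces $\bar a+1$ non-$\mb{0}$ vertices, not $\bar a$; you state this correctly later.
\item $n_1=0$ is impossible because the weight $5$ is odd, so $\{\mb{0},\mb{2}\}$-colorings never need to be excluded.
\end{itemize}
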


The proof of Proposition~\ref{prop:unified_q5t3} is in Appendix~\ref{appendix:coeff-of-q5-t3}. Simplifying the formula of \Cref{prop:unified_q5t3} according to cases, we have the following expression for $[q^5t^3]$. 

\begin{cor} \label{cor:q5t3b1simp}
For all caterpillars of type $(a,1,c)$, we have
 \[[q^5t^3]=\begin{cases}
 14&\text{if $a=c=2$,}\\
  18&\text{if $a=3$,\ $c=2$,}\\
   5+3\bar{a}+\binom{\bar{a}}{2}&\text{if $a\geq 4$,\ $c=2$,}\\
        15 & \text{if $a=3$,\ $c=3$,}\\
        \binom{\bar{a}}{2}+2\bar{a}+8 & \text{if $a\geq 4$,\ $c=3$,}\\
        2(\bar{a}+\bar{c})+\binom{\bar{a}}{2}+\binom{\bar{c}}{2}+1 & \text{if $a\geq c \geq 4$}.
    \end{cases}\]
\end{cor}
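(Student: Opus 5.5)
The plan is to specialize the unified formula of \Cref{prop:unified_q5t3} at $b=1$ and then split into cases according to whether $a$ and $c$ equal $2$, equal $3$, or are at least $4$. Everything that follows is bookkeeping with the Kronecker deltas. No new coloring counts are needed, since \Cref{prop:unified_q5t3} is assumed.

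Setting $b=1$ kills the terms $\delta_{b=2}$ and $\delta_{b=3}$ and turns on the $\delta_{b=1}$ bracket. This gives
\[
[q^5t^3]=1+2(\bar a+\bar c)+\binom{\bar a}{2}+\binom{\bar c}{2}+6\delta_{a=2}+(a+1)\delta_{c=2}+4\delta_{a=3}\delta_{c=2}+2\delta_{a=3}+2\delta_{c=3}.
\]
Since $a\ge c\ge 2$, the possible regimes are exactly the six listed: $a=c=2$; $a=3,c=2$; $a\ge4,c=2$; $a=c=3$; $a\ge4,c=3$; and $a\ge c\ge4$. In particular $a=2$ forces $c=2$.

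Each regime is then evaluated directly.
\begin{itemize}
\item For $a=c=2$: $1+4+0+0+6+3=14$.
\item For $a=3,c=2$: $1+6+1+0+4+4+2=18$.
\item For $a\ge4,c=2$: $1+2\bar a+2+\binom{\bar a}{2}+(\bar a+2)=5+3\bar a+\binom{\bar a}{2}$, using $a+1=\bar a+2$.
\item For $a=c=3$: $1+8+1+1+2+2=15$.
\item For $a\ge4,c=3$: $1+2\bar a+4+\binom{\bar a}{2}+1+2=\binom{\bar a}{2}+2\bar a+8$.
\item For $a\ge c\ge4$: all delta terms vanish, leaving the generic expression.
\end{itemize}

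There is no real obstacle beyond correctly tracking which delta terms survive in each regime. The one point needing care is the interplay of $(a+b)\delta_{c=2}$ with $\delta_{a=3}\delta_{c=2}$ and $\delta_{a=3}$ in the case $a=3,c=2$. I would double-check that case, and the $a=c=2$ case where $3(b+1)\delta_{a=2}=6$, against a direct SageMath computation for $C_{(2,1,2)}$ and $C_{(3,1,2)}$ as a sanity check.
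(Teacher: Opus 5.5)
Your proposal is correct and is the same argument the paper uses: set $b=1$ in \Cref{prop:unified_q5t3}, then evaluate the surviving Kronecker-delta terms in each of the six regimes allowed by $a\ge c\ge 2$. Your specialized formula and all six case evaluations check out, including the $a=3,\ c=2$ case ($18$) and the $a=c=2$ case ($14$).
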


\begin{proposition}\label{prop:b1 recovery}
    All caterpillars of type $(a,1,c)$ are recoverable from $\ps_3(Z_{(a,b,c)}(\mathbf{x};t)).$
\end{proposition}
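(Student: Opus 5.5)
The plan is to recover $a$ and $c$ from two pieces of data: the total number of vertices, which gives $a+c$, and the single coefficient $[q^5t^3]$, whose values for $b=1$ are listed in Corollary~\ref{cor:q5t3b1simp}. By Proposition~\ref{prop:basic-properties-psn(Z_G)}(1) (equivalently, the first two lines of Algorithm~\ref{alg:recover-abc}), we know $N=a+1+c$, so $s:=a+c=N-1$ is known. By Lemma~\ref{lem:coeff-of-q3-t3} we already know $b=1$. It therefore suffices to show that, for fixed $s$, the map $c\mapsto[q^5t^3]$ is injective on $2\le c\le s/2$, where we recall $a\ge c$.

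\textbf{Small values of $s$.} If $s=4$ the only possibility is $(2,1,2)$, and if $s=5$ it is $(3,1,2)$. If $s=6$ the candidates are $(4,1,2)$ and $(3,1,3)$. Corollary~\ref{cor:q5t3b1simp} gives $5+9+3=17$ for $(4,1,2)$ and $15$ for $(3,1,3)$, so these are distinguished.

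\textbf{The case $s\ge 7$.} Put $n:=s-2=\bar a+\bar c$ and $r:=\bar c$, so $1\le r\le n/2$, and write the values of Corollary~\ref{cor:q5t3b1simp} in terms of $n$ and $r$.
\begin{itemize}
\item For $c=2$ we have $\bar a=n-1$, and the value is $5+3(n-1)+\binom{n-1}{2}=\binom{n}{2}+2n+3$.
\item For $c=3$ we have $\bar a=n-2$, and the value is $\binom{n-2}{2}+2(n-2)+8=\binom{n}{2}+7$.
\item For $c\ge 4$, that is $3\le r\le n/2$, the value is $2n+1+\binom{n-r}{2}+\binom{r}{2}$.
\end{itemize}
By Lemma~\ref{lem:antiunimodal binomials}, the last expression is strictly decreasing in $r$ on $0\le r\le n/2$. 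Hence distinct $c\ge 4$ give distinct values. Moreover, all of them are at most the value at $r=3$. Using $\binom{n-r}{2}+\binom{r}{2}=r^2-nr+\binom n2$, that value equals $\binom{n}{2}+10-n$.

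Since $n\ge 5$, we get $\binom n2+10-n\le\binom n2+5<\binom n2+7<\binom n2+2n+3$. So the three regimes $c\ge4$, $c=3$ and $c=2$ produce pairwise disjoint sets of values, each regime is internally injective, and $[q^5t^3]$ determines $c$. We then set $a=s-c$.

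The main obstacle is this cross-regime comparison: the $c=2$ and $c=3$ formulas in Corollary~\ref{cor:q5t3b1simp} are not specializations of the generic $c\ge 4$ formula. One must check explicitly that they land outside the range of the generic branch, which is what the bound $n\ge 5$ accomplishes. Concretely, recovery proceeds by reading off $s$, evaluating the three formulas, and then either finding the matching branch or, in the generic branch, solving $\binom{n-r}{2}+\binom{r}{2}=[q^5t^3]-2n-1$ for the unique $r\le n/2$.
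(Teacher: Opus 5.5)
Your proposal is correct and follows essentially the same route as the paper. You detect $b=1$ from $[q^3t^3]$, recover $\bar a+\bar c$ from the vertex count (the paper reads it off the $t^2$ multiset instead), and then separate the $c=2$, $c=3$ and $c\ge 4$ branches of the simplified $[q^5t^3]$ formula using the anti-unimodality lemma; your closed forms $\binom{n}{2}+2n+3$ and $\binom{n}{2}+7$ are just the paper's $3z+2+\binom{z-1}{2}$ and $2z+3+\binom{z-2}{2}+\binom{2}{2}$ rewritten, so your cross-branch comparison is the paper's inequality chain made explicit.
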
 
\begin{proof}
    \Cref{lem:coeff-of-q3-t3} allows us to determine whether the caterpillar is of type $(a, 1, c)$. When $b=1$, the multiset $\{\{\bar{a}+\bar{c}+1,\bar{b}+2\}\}$ of \Cref{lem:ac1b2} has the form $\{\{\bar{a}+\bar{c}+1,2\}\}$ with $\bar{a}+\bar{c}+1\geq 3$, so we can also recover the value $z:=\bar{a}+\bar{c}$. This means that we only need to distinguish between caterpillars with the same $z$-value.
    
    Labelling the cases of \Cref{cor:q5t3b1simp} from 1 to 6, we can see that the $z$-value is different for cases $1$ and $2$ than in all other cases, so the caterpillar can be recovered without even considering the value of $[q^5t^3]$. If case 3 and case 4 have the same $z$ value, then we must have $a=4$ in the 3rd case. But then they cannot have the same $[q^5t^3]$ value since plugging in $a=4$ into the 3rd case gives 17.

    Thus, all that remains is to distinguish the 3rd, 5th, and 6th cases from each other. For this, \Cref{lem:antiunimodal binomials} implies that for all $z\geq 2$ and $3\leq \bar{c}\leq z/2$, we have
\begin{align*}
    3z+2+\binom{z-1}{2}+\binom{1}{2}&>2z+3+\binom{z-1}{2}+\binom{1}{2}\\
    &>2z+3+\binom{z-2}{2}+\binom{2}{2}\\
    &>2z+1+\binom{z-\bar{c}}{2}+\binom{\bar{c}}{2}
\end{align*}
with the 1st inequality by the fact that $3z+2>2z+3$ for $z>1$. For $z=\bar{a}+\bar{c}$, the 1st expression is exactly the 3rd case, the 3rd is the 5th, and 4th is the 6th.

Having distinguished the 6 cases, we now recover the caterpillars. One first determines $b=1$ from \Cref{lem:coeff-of-q3-t3} and thus also $z=\bar{a}+\bar{c}.$ If $z=2$ or $z=3$, then we know the caterpillar is either $(2,1,2)$ or $(3,1,2)$, respectively. Otherwise, we examine $[q^5t^3].$ If $[q^5t^3]=15$, the caterpillar is $(3,1,3)$. Next, we check if $[q^5t^3]$ is in the set \[  \left\{3z+2+\binom{z-1}{2}+\binom{1}{2},2z+3+\binom{z-2}{2}+\binom{2}{2}\right \}.\] We know there are two distinct elements in the set from the above argument. If we get the 1st value, then we know that $z=\bar{a}+1$ and thus the caterpillar is $(z,1,2)$. If we get the 2nd value, then we know that $z=\bar{a}+2$ and thus the caterpillar is $(z-1,1,3)$. If $[q^5t^3]$ is not one of those two values, it must be of the form
\[[q^5t^3]=2z+1+\binom{z-\bar{c}}{2}+\binom{\bar{c}}{2}\]
We then solve this quadratic equation for $\bar{c}$. The two solutions are the values $\bar{a}$ and $\bar{c}$.
\end{proof}

\subsection{Min = 2}
Assume that $[q^3t^3]=0$ and $[q^4t^3]\neq 1$, so that $m=2$. The arguments will mimic the case of $m=1$, but are best handled separately from $m\geq 3$ due to the particular form of the relevant coefficients in this case. First since the values of $[q^4t^3]$ are all distinct as shown in \Cref{cor:coeff-t3-q4-list}, $[q^4t^3]$  tells us which values of $(a,b,c)$ are 2, if any. Thus, additional work only needs to be done to recover the caterpillar when $[q^4t^3]=4$ (so $a,c>b=2$) and $[q^4t^3]=3$ (so $a,b>c=2$). 
\begin{proposition}
    \label{prop:coeff-of-q6t3}
    If $b \geq 2$, then the coefficient $\ps_3(Z_{(a, b, c)}(\mathbf{x}; t))$ is 
    \begin{align*}
        [q^6t^3]&=\left({\bar{b} \choose 2} + 2(\bar{a}+\bar{c})\right) + 
        \left(4(\bar{a}+\bar{c}) + {\bar{a} \choose 2} + {\bar{c} \choose 2}\right)\delta_{b = 2}\\  &\quad+ 
        \left(2(\bar{a}+ \bar{c}) + 6\right)\delta_{b = 3} +
         5\delta_{b = 4} + \left(2{\bar{b} \choose 2} + 6\bar{b} +2\right)\delta_{a = 2}\delta_{c = 2} +\left(3\bar{b} + 11\right)\delta_{a = 3}\delta_{c = 2}\\
          &\quad+\left(\bar{b} + 11\right)\delta_{a = 3}\delta_{c = 3}  
        + 5\delta_{a = 4}\delta_{c = 2} + 
        2\delta_{a = 4}+ 
        \left(2(\bar{a}+\bar{b}) + {\bar{a} \choose 2} + {\bar{b} \choose 2}\right)\delta_{c = 2} \\
         &\quad+ (\bar{a}+\bar{b} + 4)\delta_{c = 3} + 2 \delta_{c = 4}.
    \end{align*}
\end{proposition}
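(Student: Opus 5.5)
We count proper colorings $\kappa:V\to\{\mb{0},\mb{1},\mb{2}\}$ of $C_{(a,b,c)}$ that use all three colors and have weight $6$. Write $v_A,v_B,v_C$ for the spine vertices and $A,B,C$ for their leaf sets, of sizes $\bar a,\bar b,\bar c$. Let $n_1,n_2$ be the numbers of vertices colored $\mb{1},\mb{2}$. Then $n_1+2n_2=6$ with $n_2\geq 1$ and $n_1\geq 1$, so $(n_1,n_2)\in\{(4,1),(2,2)\}$. Every other vertex is colored $\mb{0}$, and the $\mb{0}$-vertices must form an independent set. The whole computation is therefore a classification of the ways to choose a set $S$ of at most five vertices whose complement is independent, together with a labeling of $S$ by the multiset $\{\mb{1}^4,\mb{2}\}$ or $\{\mb{1}^2,\mb{2}^2\}$ that is proper on the induced subgraph $G[S]$.

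The key structural observation is that $V\setminus S$ is independent exactly when $S$ is a vertex cover. So we first enumerate minimal-type vertex covers of size at most $5$ and organize them by the set $R\subseteq\{v_A,v_B,v_C\}$ of spine vertices lying in $S$. If $v_X\notin S$, then all of $X$, together with the spine neighbors of $v_X$, must lie in $S$. This forces the leaf-count conditions that produce the Kronecker deltas. For example, $v_A\notin S$ requires $\bar a$ extra vertices, which is affordable only when $a\le 4$; this is the source of the terms $\delta_{a=2},\delta_{a=3},\delta_{a=4}$ and their analogues for $c$ and $b$. The cases are:
\begin{enumerate}
\item $R=\{v_A,v_B,v_C\}$. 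Here $S$ contains the spine plus up to two leaves. The spine path $v_A v_B v_C$ must be properly colored from $\{\mb{1},\mb{2}\}$, so it is colored $\mb{1}\mb{2}\mb{1}$ or $\mb{2}\mb{1}\mb{2}$. The remaining one or two colors go onto leaves whose parent has a different color. This yields terms such as $2(\bar a+\bar c)$ and $\binom{\bar b}{2}$.
\item $R$ misses exactly one spine vertex.
\item $R$ misses two spine vertices; this is possible only when the corresponding ends are short.
\end{enumerate}
In each case we count the labelings. The hypothesis $b\geq 2$ is used to kill configurations where $v_B$ is uncovered for free, because those configurations contribute $[q^3t^3]$-type terms.

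The case bookkeeping naturally separates into three groups. The base term $\binom{\bar b}{2}+2(\bar a+\bar c)$ is the contribution present for all large $a,b,c$. The terms involving a single delta come from uncovering one spine vertex. The products $\delta_{a=\cdot}\delta_{c=\cdot}$ come from uncovering both $v_A$ and $v_C$, which forces $S\supseteq A\cup C\cup\{v_B\}$ with $\bar a+\bar c\le 4$.

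To avoid overcounting, fix $R$ first and then distribute the remaining budget of weight among the leaves and the colors. Within each $R$, enumerate the possible colorings of the spine vertices in $R$ and check properness only along spine edges and spine–leaf edges, since leaves are pairwise nonadjacent. I would cross-check each branch against small cases, such as $(2,2,2)$, $(3,2,3)$ and $(3,3,2)$, computed in SageMath.

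The main obstacle is the overlapping boundary cases. When several deltas are simultaneously nonzero, as in $a=3,c=2$ or $b=2$ with small ends, configurations from different $R$-cases can seem to coincide, or the count $\binom{\bar a}{2}$ can degenerate. I would handle this by making the decomposition by $R$ strictly disjoint, so that no inclusion–exclusion is needed. I would then verify, case by case, that the sum of contributions from every $R$ reproduces the stated coefficients, for example $3\bar b+11$ at $a=3,c=2$. Any remaining discrepancy would be resolved by recomputing that specific small caterpillar by hand.
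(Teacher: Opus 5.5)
Your decomposition is sound and is essentially the paper's. The paper runs through the twelve proper colorings of the spine $v_Av_Bv_C$ and, for each, counts leaf completions of type $\mb{2}+\mb{2}+\mb{1}+\mb{1}$ or $\mb{2}+\mb{1}+\mb{1}+\mb{1}+\mb{1}$. That is exactly your split $(n_1,n_2)\in\{(2,2),(4,1)\}$, refined by $R$ and then by the spine coloring.

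\textbf{First gap: the counts are never done.} The proposition consists entirely of those twelve counts, as functions of $\bar a,\bar b,\bar c$, together with their sum. You never produce any of them. For example, $\langle\mb 1,\mb 2,\mb 1\rangle$ gives $\binom{\bar b}{2}+\bar a+\bar c$, and $\langle\mb 1,\mb 0,\mb 1\rangle$ gives $(\bar a+\bar c)\delta_{b=2}+(\bar a+\bar c+1)\delta_{b=3}+3\delta_{b=4}$; the other ten need similar explicit formulas. Checking $(2,2,2)$, $(3,2,3)$, $(3,3,2)$ in SageMath and ``resolving discrepancies by recomputing'' cannot replace this. The identity must hold for all parameter values.

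\textbf{Second gap: your bookkeeping for matching the statement is wrong.} Following it would produce spurious discrepancies. The product terms do not come only from colorings with both $v_A$ and $v_C$ colored $\mb 0$. The colorings $\langle\mb 0,\mb 2,\mb 0\rangle$ and $\langle\mb 0,\mb 1,\mb 0\rangle$ together give only $\delta_{a=2}\bigl(\binom{\bar b}{2}+4\bar b\bigr)+\delta_{a=3}\delta_{c=2}(2\bar b+6)+5\delta_{a=3}\delta_{c=3}+5\delta_{a=4}\delta_{c=2}$.

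The rest comes from the colorings in which only $v_A$ is $\mb 0$:
\begin{itemize}
\item $\langle\mb 0,\mb 2,\mb 1\rangle$ contributes $\delta_{a=2}\bigl(\binom{\bar b}{2}+\bar b+\bar c\bigr)+\delta_{a=3}(\bar b+2)+\delta_{a=4}$;
\item $\langle\mb 0,\mb 1,\mb 2\rangle$ contributes $\delta_{a=2}\bigl(\binom{\bar c}{2}+\bar b+\bar c\bigr)+\delta_{a=3}(\bar c+2)+\delta_{a=4}$.
\end{itemize}
Now invoke $a\ge c$: $a=2$ forces $\bar c=1$, and $a=3$ forces $c\in\{2,3\}$. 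These two contributions then become $\delta_{a=2}\delta_{c=2}\bigl(\binom{\bar b}{2}+2\bar b+2\bigr)+\delta_{a=3}\delta_{c=2}(\bar b+5)+\delta_{a=3}\delta_{c=3}(\bar b+6)+2\delta_{a=4}$. This completes the stated coefficients $2\binom{\bar b}{2}+6\bar b+2$, $3\bar b+11$ and $\bar b+11$.

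So you must use the normalization $a\ge c$ explicitly to absorb single deltas in $a$ into products. The formula contains no standalone $\delta_{a=2}$ or $\delta_{a=3}$.

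Two smaller points. The hypothesis $b\ge 2$ matters simply because $B\neq\varnothing$ forces $B\subseteq S$ whenever $v_B$ is colored $\mb 0$; it has nothing to do with $[q^3t^3]$. Also, $S$ need not be a minimal vertex cover, so you cannot restrict the enumeration to minimal covers.
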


The proof of Proposition~\ref{prop:coeff-of-q6t3} is in Appendix~\ref{appendix:coeff-of-q6-t3}. We use the formulas for $[q^5t^3]$ and $[q^6t^3]$ to determine the caterpillars in the leftover cases from \Cref{cor:coeff-t3-q4-list}.

\begin{lemma}\label{lem:subcase4iii}
If $[q^3t^3]=0$ and $[q^4t^3] = 4$, then we can recover $(a,b,c)$ from $\ps_3(Z_{(a, b, c)}(\mathbf{x}; t)).$
\end{lemma}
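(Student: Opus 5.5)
By \Cref{cor:coeff-t3-q4-list}, the hypotheses $[q^3t^3]=0$ and $[q^4t^3]=4$ force $b=2$ with $a\geq c\geq 3$. So the task is to recover the unordered pair $\{a,c\}$, or equivalently $(\bar a,\bar c)$ with $\bar a\geq \bar c\geq 2$. The plan mirrors the proof of \Cref{prop:b1 recovery}: first extract $z:=\bar a+\bar c$ from the bipartition data, then use a single higher coefficient to identify the split of $z$ via \Cref{lem:antiunimodal binomials}.

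\emph{Step 1: recover $z$.} By \Cref{lem:ac1b2}, the multiset $\{\{\bar a+\bar c+1,\bar b+2\}\}=\{\{z+1,3\}\}$ is determined by $\ps_3(Z_{(a,b,c)}(\mathbf{x};t))$. Since $\bar a,\bar c\geq 2$, we have $z+1\geq 5>3$. Hence $z$ is the larger element minus one, equivalently $z=N-3$ with $N$ the number of vertices.

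\emph{Step 2: specialize $[q^5t^3]$ and $[q^6t^3]$.} In \Cref{prop:unified_q5t3}, put $b=2$, $c\geq 3$, $a\geq 3$. The result is
\[[q^5t^3]=2+(2z+3)+2\delta_{a=3}+2\delta_{c=3},\]
which only records how many of $a,c$ equal $3$. This separates $(3,2,3)$, the case $c=3<a$, and the case $a,c\geq 4$, but it does not pin down the split in the last case. Next put $b=2$ (so $\bar b=1$) in \Cref{prop:coeff-of-q6t3}. All $\delta_{c=2},\delta_{a=2},\delta_{b=3},\delta_{b=4}$ terms vanish, leaving
\[[q^6t^3]=2z+4z+\binom{\bar a}{2}+\binom{\bar c}{2}+(\bar b+11)\delta_{a=3}\delta_{c=3}+2\delta_{a=4}+(\bar a+5)\delta_{c=3}+2\delta_{c=4}.\]
Since $z$ is known, we subtract $6z$. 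In the generic case $a\geq c\geq 5$ the remainder is $\binom{\bar a}{2}+\binom{z-\bar a}{2}$. By \Cref{lem:antiunimodal binomials} this is strictly monotone in $\bar c\in[2,z/2]$, so it determines $\bar c$, and then $\bar a=z-\bar c$.

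\emph{Step 3: handle the boundary cases.} Use $[q^5t^3]$ to decide whether $c=3$. If exactly one of $a,c$ equals $3$, that one is $c$ (since $a\geq c$), and $a=z-1$ is forced. If both are $3$, the caterpillar is $(3,2,3)$. There remain $c=4$ and $c\geq 5$, both with $a\geq 4$. These differ in $[q^6t^3]$ only through the extra $2\delta_{c=4}+2\delta_{a=4}$. One shows these cannot collide with a generic value $\binom{z-\bar c}{2}+\binom{\bar c}{2}$, $\bar c\geq 4$, for the same $z$. The value for $c=4$ is $\binom{z-3}{2}+3+2+2\delta_{a=4}$, and the generic value for $\bar c=4$ is already at most $\binom{z-3}{2}+3-(z-7)$-ish, smaller once $z\geq 8$. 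The finitely many small $z$ (where $a=4$) are checked directly.

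This last collision analysis is the main obstacle: bounding the gap between consecutive values $\binom{z-r}{2}+\binom{r}{2}$, which equals $z-2r+1$, against the constant corrections $2$ or $4$. First, compare $r=3$ with $r=4$: the gap is $z-7\geq 1$, and an offset of $+2$ only pushes the $c=4$ value further above. So the $c=4$ value strictly exceeds every generic value, which would settle the case.
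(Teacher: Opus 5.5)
Your proof is correct and is essentially the paper's argument: you recover $z=\bar a+\bar c$ from the bipartition multiset, use $[q^5t^3]=2z+5+2\delta_{a=3}+2\delta_{c=3}$ to isolate the $c=3$ cases, and then use $[q^6t^3]-6z$ with \Cref{lem:antiunimodal binomials}, showing that the $c=4$ value $\binom{z-3}{2}+5+2\delta_{a=4}$ strictly exceeds every $\binom{z-\bar c}{2}+\binom{\bar c}{2}$ with $4\le \bar c\le z/2$. One harmless slip: since $N=a+2+c=z+4$, you have $z=N-4$ rather than $N-3$; your other description of $z$, as the larger multiset element minus one, is the correct one.
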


\begin{proof}
By \Cref{cor:coeff-t3-q4-list}, we know that $b = 2$ and $a, c \geq 3$. Hence, our known multiset is of the form $\{\{ a + c - 1, 3\}\}$, so we can recover the sum $z:=\bar{a} + \bar{c}$. Next, we check the coefficient $[q^5t^3].$ By Proposition~\ref{prop:unified_q5t3}, when $b = 2$ and $a,c, \geq 3$, we have 
\[[q^5t^3] = 2z +5  + 2\delta_{a = 3}\delta_{c = 3} + 2\delta_{c = 3}.\]
    We now have three cases based on the (currently unknown) values of $a$ and $c$:

    \begin{enumerate}
        \item[(i)] If $a = c = 3$, then $[q^5t^3]$ satisfies $[q^5t^3] = 2z +9$.
        \item[(ii)] If $a \geq 4 > c = 3$, then $[q^5t^3]$ satisfies  $[q^5t^3] = 2z + 7$.
        \item[(iii)]  If $a \geq c \geq 4$, then $[q^5t^3]$ satisfies $[q^5t^3] = 2z + 5$.
    \end{enumerate}
  Since we know $z$ and the coefficient $[q^5t^3]$,  we are able to distinguish among the cases (i), (ii), and (iii). In case (i), the caterpillar is $(3, 2, 3).$ In case (ii), the caterpillar is $((a + c) - 3, 2, 3)$. It remains to show we can recover the caterpillar in case (iii).

  Assume we are in case (iii), so that $a \geq c \geq 4.$ Then, \Cref{prop:coeff-of-q6t3} reduces to
    \begin{align*}
        [q^6t^3]= 6z + {\bar{a} \choose 2} + {\bar{c} \choose 2} + 2\delta_{a = 4}\delta_{c = 4} + 2\delta_{c = 4}.
    \end{align*}
    We now have three cases, based on the values of $a$ and $c$:
    \begin{enumerate}
        \item If $a = c = 4$, then $[q^6t^3] = 46$.
        \item If $a \geq 5 > c = 4$, then $[q^6t^3] = 6\bar{a}+23 + \binom{\bar{a}}{2}$.
        \item If $a, c \geq 5$, then $[q^6t^3] =  6z  + {\bar{a} \choose 2}+ \binom{\bar{c}}{2}.$
    \end{enumerate}
    Observe that in case (2), since $a \geq 5$, the coefficient $[q^6t^3] \geq 53$. Similarly, in case (3), since $a, c \geq 5$, the coefficient $[q^6t^3] \geq 60.$ Hence, if $[q^6t^3] = 46$, we can immediately conclude that $a = c = 4.$

    Assume now that $[q^6t^3] > 46$, so that we are in case (2) or (3). Observe that the formula in case (2) can be rewritten as: \begin{equation}\label{eqn:case-2-rewritten}[q^6t^3] = 6z+2+\binom{3}{2} + \binom{z- 3}{2}.\end{equation} 
   
    On the other hand, the formula in case (3) can be written as \begin{equation}\label{eqn:case-3-rewritten}[q^6t^3] = 6z + \binom{z-\bar{c}}{2} + \binom{\bar{c}}{2}.\end{equation} 

    Applying \Cref{lem:antiunimodal binomials}, we have that 
    \begin{equation}\label{eqn:final-to-show} 6z+2+\binom{3}{2} + \binom{z- 3}{2} >6z+\binom{\bar{c}}{2} + \binom{z- \bar{c}}{2},\end{equation}
    since $3<4\leq \bar{c}\leq z/2$. Thus, $[q^6t^3]$ distinguishes case (2) and case (3) for a fixed $z$, and we can recover $(a,2,c)$ as follows: if $[q^6t^3]$ satisfies \Cref{eqn:case-2-rewritten}, then we know that $(a,2,c)=(z-2,2,4)$. Otherwise, we know that $[q^6t^3]$ satisfies \Cref{eqn:case-3-rewritten}. We then solve this quadratic equation for $\bar{c}$. The two solutions are the values $\bar{a}$ and $\bar{c}$, and we know that the (weakly) smaller of the two is $\bar{c}.$
\end{proof}
\begin{lemma}\label{lem:subcase5}
Assume that $[q^3t^3]=0$ and that $[q^4t^3] = 3$. 
    Then, we can recover the individual values of $a$ and $c$ from $\ps_3(Z_{(a, b, c)}(\mathbf{x}; t)).$
\end{lemma}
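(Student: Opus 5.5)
By \Cref{cor:coeff-t3-q4-list}, the hypotheses $[q^3t^3]=0$ and $[q^4t^3]=3$ put us in the case $c=2$, $a\geq 3$, $b\geq 3$. Then the multiset of \Cref{lem:ac1b2} is $\{\{a,\;b+1\}\}=\{\{\bar a+1,\bar b+2\}\}$. So we know the total $N=a+b+2$, and hence $y:=\bar a+\bar b=N-4$, but not which element of the multiset is $a$. The plan mirrors \Cref{lem:subcase4iii}. First use $[q^5t^3]$ to separate the small boundary cases. Then use $[q^6t^3]$ together with \Cref{lem:antiunimodal binomials} to find the split of $y$ into $\bar a$ and $\bar b$, and use the multiset to decide which part is $\bar a$.

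\textbf{Step 1.} With $c=2$, $a,b\geq 3$, \Cref{prop:unified_q5t3} specializes to
\[
[q^5t^3]=b+(a+b)+4\delta_{b=3}+4\delta_{a=3}+2\delta_{a=3}=y+b+2+4\delta_{b=3}+6\delta_{a=3}.
\]
Since $y$ is known, this coefficient determines $b+4\delta_{b=3}+6\delta_{a=3}$. I would check that this, together with the known multiset, already pins down $(a,b)$ in most cases.

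If $a=3$, the value is $b+6$, and the multiset $\{\{3,b+1\}\}$ confirms it. If $b=3$ and $a\ge4$, the value is $7$. Otherwise the value is $b\geq 4$, which gives $b$ directly, and then $a=N-2-b$.

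The only collisions occur among small values: for example $b=7$ with $a\ge4$ gives $7$, the same as $b=3$; and $a=3,b=4$ gives $10$. In each such collision I would compare $N$ and the multiset. For instance, $(a,b)=(x,3)$ has multiset $\{\{x,4\}\}$, while $(a',7)$ has $\{\{a',8\}\}$, so they coincide only if $x=8$ and $a'=4$. The finitely many surviving collisions are then separated by $[q^6t^3]$.

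In fact $[q^5t^3]$ determines $b$ outright unless a collision occurs, so the generic argument is essentially done at this point. The remaining work is the collision bookkeeping.

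\textbf{Step 2, fallback for robustness.} If Step 1 left ambiguity, I would specialize \Cref{prop:coeff-of-q6t3} to $c=2$, $a,b\ge3$:
\[
[q^6t^3]=\tbinom{\bar b}{2}+2\bar a+2+(2\bar b+8)\delta_{b=3}+5\delta_{b=4}+(3\bar b+11)\delta_{a=3}+5\delta_{a=4}+2\delta_{a=4}+2y+\tbinom{\bar a}{2}+\tbinom{\bar b}{2}.
\]
In the generic range $a,b\geq5$ this is $2y+2\bar a+2+\binom{\bar a}{2}+2\binom{\bar b}{2}$. Because of the asymmetric terms, it is strictly monotone in $\bar b$ once $y$ is fixed, and \Cref{lem:antiunimodal binomials}-type arguments apply. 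So it separates any two candidates with the same $N$.

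\textbf{Main obstacle.} The main obstacle is the exhaustive but finite check that the small cases ($a\in\{3,4\}$ or $b\in\{3,4\}$) do not collide with generic ones. I would handle this by writing each case's pair $([q^5t^3],[q^6t^3])$ as an explicit function of $N$ and comparing the cases directly, using the quadratic growth to rule out equality for large $N$. The finitely many small $N$ would be checked by hand.
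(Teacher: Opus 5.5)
There is a genuine gap, and it starts with an arithmetic slip. With $c=2$ the multiset of \Cref{lem:ac1b2} is $\{\{\bar a+\bar c+1,\bar b+2\}\}=\{\{a+1,b+1\}\}$, not $\{\{a,b+1\}\}$. To see this, note that the part of the bipartition containing the middle spine vertex consists of that vertex, the $\bar a$ leaves at the first spine vertex, and the single leaf at the third spine vertex.

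So the data already give you the unordered pair $\{\{a,b\}\}$, and the only ambiguity is the swap $(x,w,2)$ versus $(w,x,2)$. This is exactly what the paper's proof uses, and your Step 1 becomes that proof once the multiset is corrected. By \Cref{prop:unified_q5t3}, $[q^5t^3]=a+2b+4\delta_{b=3}+6\delta_{a=3}$, which settles every case:
\begin{itemize}
\item If $3\notin\{a,b\}$, then $[q^5t^3]-(a+b)=b$.
\item If $\{\{a,b\}\}=\{\{3,w\}\}$ with $w>3$, the two candidates give $2w+9$ and $w+10$, which never coincide.
\item If $\{\{a,b\}\}=\{\{3,3\}\}$, the caterpillar is forced to be $(3,3,2)$.
\end{itemize}
No collisions remain, and $[q^6t^3]$ is never needed. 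Your ``collision'' $(8,3,2)$ versus $(4,7,2)$ is an artifact of the wrong multiset; their true multisets are $\{\{9,4\}\}$ and $\{\{5,8\}\}$.

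Because of the slip, your proposal never closes the argument. The collision bookkeeping is deferred to a check ``by hand'', and the fallback you offer for it is false. In the range $a,b\ge5$, your own specialization gives $[q^6t^3]=2y+2+2\bar a+\binom{\bar a}{2}+2\binom{\bar b}{2}$. For fixed $y$ this equals a constant plus $\frac32\bar b^2-(y+\frac52)\bar b$, which is a convex quadratic in $\bar b$, not a monotone function. For example, $(11,5,2)$ and $(8,8,2)$ both have $N=18$ and $[q^6t^3]=107$, though they differ in $[q^5t^3]$ and in the multiset. So the claim that $[q^6t^3]$ separates any two candidates with the same $N$ is wrong. A route through $N$ and $[q^6t^3]$ alone cannot work; you need the full, correctly computed multiset together with $[q^5t^3]$.

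There is also a small slip in your $[q^6t^3]$ formula. The $\delta_{b=3}$ term should be $(2\bar a+8)\delta_{b=3}$, since it comes from $2(\bar a+\bar c)+6$.
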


\begin{proof}
    By \Cref{cor:coeff-t3-q4-list}, we can conclude that $c = 2$ and $a, b \geq 3.$ Hence, our known multiset is $\{\{ a + 1, b + 1\}\}$, and we also know the multiset $\{\{ a , b \}\}$. We are done if $\{\{a, b\}\} = \{\{3, 3\}\}$, since then the caterpillar is $(3, 3, 2).$ So, we assume this is not the case and we check the coefficient $[q^5t^3]$,  which by Proposition~\ref{prop:unified_q5t3} is
     \[[q^5t^3] = a + 2b + 4\delta_{b = 3} + 6\delta_{a = 3}  =\begin{cases}
         19&a=3,b=3,c=2,\\
         2b+9&a=3,b>3,c=2,\\
         a+10&a>3,b=3,c=2,\\
         a+2b&a>3,b>3,c=2\\
     \end{cases}.\] 
    By assumption, we are not in the first case, and so $\{\{a, b\}\}$ has either no $3$s or exactly one $3$.
    
    First, assume $\{\{a, b\}\}$ contains exactly one $3$, meaning that $\{\{a, b\}\} = \{\{3, y\}\}$ for some $y > 3$. Then either:
    \begin{enumerate}
        \item $a = 3$ and $b = y$, in which case $[q^5t^3 ] = 2y + 9$, or
         \item $a = y$ and $b = 3$, in which case $[q^5t^3] = y + 10$.
    \end{enumerate}
    There are no solutions for $2y + 9 = y + 10$ with $y > 3$, and so we are able to know in which case we are by checking which equation is satisfied by $y$ and $[q^5t^3].$ Once we know in which case we are, we know the caterpillar: in the first subcase, we have the caterpillar $(3, y, 2)$; in the second, we have $(y, 3, 2).$
    
    Finally, assume the multiset $\{\{a, b\}\}$ has no 3's and hence $a,b>3$. Then, $c = 2$ and $[q^5t^3] = a + 2b = (a + b) + b.$ Since we know the value of $a + b$ and the coefficient $[q^5t^3]$, we can solve for $b$ (and then $a$). Hence, we will be able to recover the caterpillar, completing the proof.
\end{proof}

\begin{cor}
\label{prop:recovering_from_m2}
   If  $\min \{a, b, c\} =  2$, then we can determine the values  $(a, b, c)$ from $\ps_3(Z_{(a, b, c)}(\mathbf{x}; t)).$
\end{cor}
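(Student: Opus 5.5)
The plan is a case analysis on the value of $[q^4t^3]$, which by \Cref{cor:coeff-t3-q4-list} is one of $11, 8, 6, 4, 3$ whenever $\min\{a,b,c\}=2$ (recall $b>1$ is forced by $[q^3t^3]=0$, via \Cref{lem:coeff-of-q3-t3}). These five values are pairwise distinct. So $[q^4t^3]$ tells us exactly which entries of $(a,b,c)$ equal $2$, given the normalization $a\geq c\geq 2$. The remaining unknown entries are then recovered from the bipartition data of \Cref{lem:ac1b2}, together with $N$ from \Cref{prop:basic-properties-psn(Z_G)}(1).

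\textbf{Cases settled by $[q^4t^3]$ and $N$.} In each of these, the known $2$'s plus the total vertex count $N=a+b+c$ determine the caterpillar.
\begin{itemize}
\item If $[q^4t^3]=11$, the caterpillar is $(2,2,2)$.
\item If $[q^4t^3]=8$, then $a=c=2$, so $b=N-4$ and the caterpillar is $(2,N-4,2)$.
\item If $[q^4t^3]=6$, then $b=c=2$, so $a=N-4$ and the caterpillar is $(N-4,2,2)$.
\end{itemize}
Under the convention $a\geq c$, the case $a=2$ forces $c=2$, so no other configuration occurs.

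\textbf{Cases handled by the two lemmas.} When $[q^4t^3]=4$, we have $b=2$ and $a,c\geq 3$, and \Cref{lem:subcase4iii} recovers $(a,b,c)$ directly. When $[q^4t^3]=3$, we have $c=2$ and $a,b\geq3$. Here \Cref{lem:subcase5} recovers the individual values of $a$ and $b$, using $N$ or the multiset $\{\{a+1,b+1\}\}$ to get $\{\{a,b\}\}$. Together with $c=2$ this gives the caterpillar.

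Since these five values of $[q^4t^3]$ exhaust all configurations with minimum $2$, the corollary follows. The only point needing care is the bookkeeping: check that the list in \Cref{cor:coeff-t3-q4-list} is exhaustive under $a\geq c\geq 2$, $b\geq 2$ and minimum $2$. No substantive obstacle is expected, since the hard work is already contained in \Cref{lem:subcase4iii} and \Cref{lem:subcase5}.
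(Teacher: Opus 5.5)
Your proposal is correct and follows the paper's proof: the same five-way split on $[q^4t^3]\in\{11,8,6,4,3\}$ from \Cref{cor:coeff-t3-q4-list}, with the last two cases handed off to \Cref{lem:subcase4iii} and \Cref{lem:subcase5}. The only difference is that you read off the remaining entry in the cases $8$ and $6$ from $N=a+b+c$ rather than from the bipartition multiset $\{\{a+c-1,b+1\}\}$, which is an immaterial variation.
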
 

\begin{proof}
We collect the various statements proven so far.

By assumption, we know that $\{a, b, c\}$ contains a $2$ and that $b \neq 1$. 

 By \Cref{cor:coeff-t3-q4-list}, there are five possibilities for the coefficient $[q^4t^3]$ given our assumptions: $11, 8, 6, 4$ or $3$. Recall that from Lemma~\ref{lem:ac1b2}, we also know the multiset of values $\{\{ a +c -1, b + 1\}\}.$ We show that we can recover $(a, b, c)$ from the given multiset and the coefficient $[q^4t^3]$ by examining each of the five cases below.

\begin{enumerate}
    \item If $[q^4t^3] = 11$, we know that $(a, b, c) = (2, 2, 2).$
    \item If $[q^4t^3] = 8$, we know that $a = c = 2$ and $b \geq 3.$ Hence, the given multiset is of the form $\{\{3, b + 1\}\}$ allowing us to recover $b$ as well. 
    \item  If $[q^4t^3] = 6$, then $b = c = 2$ and $a \geq 3.$ Hence, the given multiset is of the form $\{\{a + 1, 3\}.$ Since $a + 1 > 3$, we can uniquely recover $a.$
    \item If $[q^4t^3] = 4$, then $(a, b, c)$ is recoverable by \Cref{lem:subcase4iii}.
    \item  If $[q^4t^3] = 3$, then $(a, b, c)$ is recoverable by \Cref{lem:subcase5}.
\end{enumerate} 
\end{proof}
\subsection{Min \texorpdfstring{$\geq$}{>=} 3}

In this section, we assume $[q^3t^3] = 0$ and $[q^4t^3] = 1$, which by Lemma~\ref{cor:coeff-t3-q4-list} means we have $a,b,c \ge 3$.
We proceed in three stages. First, Proposition~\ref{prop:multiset vals} recovers the two values $b$ and $a+c$ and thus shows the correctness of \Cref{alg:midends}. Next, Proposition~\ref{prop:q_m_plus_2} determines the minimum value among $a,b,c$ as well as its multiplicity. This allows us to recover the values of $a$ and $c$ as long as $b$ is not the unique minimum. Finally, Proposition~\ref{prop:recover from m3} determines $a$ and $c$ when $b$ is the unique minimum.

\begin{algorithm}
\caption{Recover $b$ and $a+c$ when $\min\{a,b,c\}\ge3$}
\label{alg:midends}
\begin{algorithmic}
\Require $f$, multiset $M$
\Ensure $(b,a+c)$

\State $c_{53}\gets[q^5t^3]f$

\If{$4\in M$}
    \State $b\gets3$
\ElsIf{$\max(M) = \min(M)$}
    \State $b \gets \min(M) - 1$
\ElsIf{$5\in M$}
    \If{$c_{53} < 7$}
        \State $b \gets 4$
    \Else
        \State $b \gets c_{53} - 4$
    \EndIf
\ElsIf{$6\in M$}
    \If{$c_{53} \le 7$}
        \State $b \gets 5$
    \Else
        \State $b \gets c_{53} - 2$
    \EndIf
\Else
    \If{$c_{53} = \max(M) + 1$}
        \State $b \gets c_{53} - 2$
    \ElsIf{$c_{53} = \min(M) - 1$}
        \State $b \gets c_{53}$
    \ElsIf{$\max(M) - \min(M) \neq 2$}
        \If{$c_{53} = \max(M) - 1$}
            \State $b \gets c_{53}$
        \Else
            \State $b \gets c_{53} - 2$
        \EndIf
    \Else
        \State $c_{63}\gets[q^6t^3]f$
        \State $x \gets \frac{1}{2}(\sum M - 4)$
        \Comment{$\sum M$ means the sum of all elements in M}
        \If{$c_{63} = \binom{x-1}{2}+ 2(x+2) + (2x+3)$}
            \State $b \gets x$
        \Else
            \State $b\gets \max(M)-1$
        \EndIf
    \EndIf
\EndIf

\State \Return $(b,\sum M-b)$
\end{algorithmic}
\end{algorithm}

To start, we give a specialized version of \Cref{prop:coeff-of-q6t3}.
\begin{cor} \label{prop:q6t3}
Suppose $a \ge 4$, $b > 5$,  and $a\geq c \geq 3$, then
    \[[q^6t^3]=\binom{\bar{b}}{2}+2(\bar{a} + \bar{c})+(\bar{a}+\bar{b}+4)\delta_{c=3}+2\delta_{a=4} + 2\delta_{c=4}.\] 
\end{cor}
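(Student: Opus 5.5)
This is a direct specialization of \Cref{prop:coeff-of-q6t3}. Since $b>5\ge 2$, that proposition applies, and the plan is to substitute the hypotheses $a\ge 4$, $b>5$, $a\ge c\ge 3$ into its formula and check which Kronecker delta terms survive.

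First, all delta terms involving $b$ vanish. Because $b>5$, we have $\delta_{b=2}=\delta_{b=3}=\delta_{b=4}=0$, so the terms
\[
\Bigl(4(\bar a+\bar c)+\tbinom{\bar a}{2}+\tbinom{\bar c}{2}\Bigr)\delta_{b=2},\qquad \bigl(2(\bar a+\bar c)+6\bigr)\delta_{b=3},\qquad 5\delta_{b=4}
\]
all drop out.

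Second, the terms indexed by small values of $a$ and $c$. Since $c\ge 3$, $\delta_{c=2}=0$, which removes $\bigl(2(\bar a+\bar b)+\binom{\bar a}{2}+\binom{\bar b}{2}\bigr)\delta_{c=2}$ and every product containing $\delta_{c=2}$: namely $\delta_{a=2}\delta_{c=2}$, $\delta_{a=3}\delta_{c=2}$ and $\delta_{a=4}\delta_{c=2}$. Since $a\ge 4$, $\delta_{a=3}=0$, which removes $(\bar b+11)\delta_{a=3}\delta_{c=3}$. (The hypothesis $a\ge 4$ is exactly what excludes the case $a=c=3$, which would otherwise contribute the extra term $(\bar b+11)$.)

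Third, what remains is the base term $\binom{\bar b}{2}+2(\bar a+\bar c)$ together with $2\delta_{a=4}$, $(\bar a+\bar b+4)\delta_{c=3}$ and $2\delta_{c=4}$. This is precisely the stated formula.

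There is no real obstacle here. The only care needed is to confirm that every product of deltas has at least one vanishing factor under the hypotheses, and that none of the surviving terms is accidentally eliminated.
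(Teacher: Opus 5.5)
Your proposal is correct and follows the paper's route exactly: the paper states this corollary as a direct specialization of \Cref{prop:coeff-of-q6t3} with no further argument. Your term-by-term check that every Kronecker delta involving $b\in\{2,3,4\}$, $c=2$, or $a=3$ vanishes under $a\ge 4$, $b>5$, $a\ge c\ge 3$ is precisely that specialization.
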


\begin{lemma}
\label{lem:q5 recovers b}
When $\min\{a,b,c\}\geq 3$, we have
\begin{equation}
\label{eq.q5t3}
[q^5t^3]=
\begin{cases}
    11 & \text{ if $a=b=c=3$,}\\
    9 & \text{ if $a > b=c=3$,}\\
    7 & \text{ if $a,c>3 = b$,} \\
    b+4 & \text{ if $b > 3 = a = c$,}\\
    b+2 & \text{ if $a,b>3 = c$,}\\
    b & \text{ if $a,b,c\geq 4$.}
\end{cases}
\end{equation}
Furthermore, we can distinguish between each of these cases using the multiset $\{\{m_1,m_2\}\}=\{\{a+c-1,b+1\}\}$, $[q^5t^3]$, and $[q^6t^3]$.
\end{lemma}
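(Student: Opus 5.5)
The first part, the case formula \eqref{eq.q5t3}, is a direct specialization of \Cref{prop:unified_q5t3}. With $a,b,c\geq 3$, every term carrying $\delta_{b=1}$, $\delta_{b=2}$, $\delta_{a=2}$ or $\delta_{c=2}$ vanishes, so
\[[q^5t^3]=b+4\delta_{b=3}+2\delta_{a=3}+2\delta_{c=3}.\]
Since $a\geq c$, having $a=3$ forces $c=3$. Running through the possibilities then gives the six values. For instance, $a=b=c=3$ gives $3+4+2+2=11$, while $a>b=c=3$ gives $3+4+2=9$; the remaining cases are handled the same way.

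For the distinguishing part, I would first use the multiset $M=\{\{a+c-1,b+1\}\}$ from \Cref{lem:ac1b2}. I will follow the branches of \Cref{alg:midends} and prove the claimed output on each branch. Note that $a+c-1\geq 5$ and $b+1\geq 4$.

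\begin{itemize}
\item If $4\in M$, then $b=3$, since $a+c-1\geq 5$.
\item If the two elements of $M$ are equal, then $b=\min M-1$.
\item If $5\in M$ and $b\neq 3$, then either $a=c=3$ (so $a+c-1=5$), or $b=4$. The case formula gives $[q^5t^3]=b+4\geq 8$ in the first subcase, and a value in $\{7,9,11\}$ is impossible there. In the second subcase the value is $4$, $6$ or $4$ (the $b$, $b+2$ or $b+4$ rows with $b=4$), which is $<7$. Actually the second subcase also permits $a=c=3$ with $b=4$, giving $8$. In that event $M=\{\{5,5\}\}$, which was already handled by the equal-elements branch. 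So the threshold $7$ separates the subcases, and $b=c_{53}-4$ in the large case.
\item If $6\in M$, the situation is analogous: either $a+c=7$ (so $c=3<a=4$, giving $b+2\geq 7$ with $b\neq 5$), or $b=5$ (giving a value $\leq 7$). The boundary collisions must be checked by hand.
\end{itemize}

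The generic branch is where both elements of $M$ are at least $7$. Then $b\geq 6$ and $a+c\geq 8$, and $[q^5t^3]\in\{b,\,b+2,\,b+4\}$, the last possible only when $a+c=6$, which is excluded here. So the unknown is which element of $M$ is $b+1$ and whether $c=3$.

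\begin{itemize}
\item If $c_{53}=\max M+1$, then $c_{53}=b+2$ with $b+1=\max M$. The alternative $b=\max M+1$ is impossible.
\item If $c_{53}=\min M-1$, then $c_{53}=b$ with $b+1=\min M$. The alternative reading, $b+2=\min M-1$ with $b+1=\max M$, contradicts $\max M\geq\min M$.
\end{itemize}

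The remaining ambiguity is $c_{53}=b$ with $b+1=\max M$, versus $c_{53}=b+2$ with $b+1=\min M$. These agree exactly when $\max M-\min M=2$, so when the gap is not $2$ the value of $c_{53}$ decides. When the gap is $2$, both readings give the same $c_{53}$. I would then invoke \Cref{prop:q6t3}, which applies since $b>5$ and $a\geq4$, and compare the two candidate caterpillars.

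\begin{itemize}
\item The candidate $b=x$ with $c=3$, where $x=\tfrac12(\sum M-4)$, has $[q^6t^3]=\binom{x-1}{2}+2(a+c-2)+(\bar a+\bar b+4)+2\delta_{a=4}$. Here $a+c=x+3$.
\item The candidate $b=\max M-1$ has all of $a,b,c\geq4$.
\end{itemize}

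The main obstacle is showing that these two $[q^6t^3]$ values always differ, including the $\delta_{a=4}$ and $\delta_{c=4}$ corrections. This will be a short inequality: the second candidate has $\bar b$ larger by $2$, which adds roughly $2b$ to $\binom{\bar b}{2}$. That gain must be compared against the $\bar a+\bar b+4$ term, so I expect some small-value checks to be needed.
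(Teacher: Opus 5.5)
Your route is the same as the paper's. You specialize \Cref{prop:unified_q5t3}, split on $M$, use $[q^5t^3]$, and fall back on \Cref{prop:q6t3} exactly when $\max M-\min M=2$ and $[q^5t^3]=\min M+1=\max M-1$. Recovering $b$ does identify the case: for $b=3$ the values $11,9,7$ differ, and for $b\ge 4$ the difference $[q^5t^3]-b\in\{4,2,0\}$ names it.

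The gap is that you stop at the one step that actually needs an argument. You never show that $[q^6t^3]$ separates the two readings, and you expect small-value checks that are unnecessary. Your setup for that step also has a slip. In the $c=3$ reading, $b+1=\min M$ and $a+c-1=\max M=b+3$ force $a=b+1$. So the candidate is the single caterpillar $(x+1,x,3)$ with $a+c=x+4$, not $x+3$, and its $\delta_{a=4}$ term vanishes since $a=x+1\ge 7$.

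Once this is fixed, the comparison is uniform.
\begin{itemize}
\item By \Cref{prop:q6t3}, $(x+1,x,3)$ has $[q^6t^3]=\binom{x-1}{2}+2(x+2)+(2x+3)=\binom{x+1}{2}+2x+8$, using $\binom{x+1}{2}-\binom{x-1}{2}=2x-1$.
\item Every caterpillar of the other reading has the form $(x+2-c,\,x+2,\,c)$ with $c\ge 4$. It has $[q^6t^3]=\binom{x+1}{2}+2x+2\delta_{a=4}+2\delta_{c=4}\le\binom{x+1}{2}+2x+4$.
\end{itemize}
So the $c=3$ value is strictly larger, by at least $4$. The margin $4$ is attained by $(7,6,3)$ versus $(4,8,4)$, which give $41$ versus $37$. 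Your heuristic, weighing the binomial gain $2x-1$ against $\bar a+\bar b+4=2x+3$, is the right idea. However, it omits the extra $+4$ coming from $2(\bar a+\bar c)$, and it never bounds the $\delta$ corrections.

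Similarly, the $6\in M$ branch has no ``boundary collision'' to check by hand. The earlier branches exclude $4\in M$, $5\in M$ and equal elements. So the subcase $(a,c)=(4,3)$ forces $b\ge 6$ and gives value $b+2\ge 8$, not merely $\ge 7$. Meanwhile $b=5$ gives $5$ or $7$, so the threshold $[q^5t^3]\le 7$ is clean.
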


\begin{proof}
The values come directly from simplifying the formula of \Cref{prop:unified_q5t3} in each case. Next we explain how to distinguish between these cases. 

Denote the multiset $\{\{m_1,m_2\}\}=\{\{a+c-1,b+1\}\}$ for some $m_1\geq m_2$. Since $a,b,c\geq 3$, we always have $m_1\geq 5$ as $a+c-1 \ge 3 + 3 -1$, and we get $m_2=4$ if and only if $b=3$.  Thus, by inspecting the coefficient $[t^2]$, we can discern whether we are in one of the first three cases, and the value of $[q^5t^3]$ immediately tells us which.

It remains to discern between the cases with $m_2\geq 5$.  If $m_1=m_2$, then we can recover the values of $b$ and $a+c$. The value of $[q^5t^3] - b$ will be $4, 2$ or $0$, and they each correspond to the fourth, fifth and sixth case of (\ref{eq.q5t3}). In the rest of the proof, we assume $m_1 > m_2$.

Notice that $a + c - 1$ equals 5 if and only if $a = c = 3$. Supposing $m_2=5$, we have three possibilities:
\[[q^5t^3]=
\begin{cases}
b+4 & \text{ if } a=c=3 \text{ and } b \ge 4 \ (m_2 = a + c - 1),\\
6 &  \text{ if } a > 3 = c \text{ and } b = 4 \  (m_2 = b+1),\\
4 & \text{ if } a,c>3 \text{ and } b = 4 \ (m_2=b+1).
\end{cases}
\]
Note $b+4\geq 7$. If $[q^5t^3]\geq 7$, we can conclude that we are in the first possibility, or the fourth case in (\ref{eq.q5t3}); otherwise, the value of $[q^5t^3]$ (which is 6 or 4) tells us if we are in the fifth or the sixth case in (\ref{eq.q5t3}). 
    
Finally, we consider the case $m_1 > m_2 > 5$. We must distinguish between the final two cases in (\ref{eq.q5t3}). 
Supposing first that $m_2=6$, we have three possibilities:
\[[q^5t^3]=\begin{cases}
    7 & \text{if $c=3$, $a>4$ and $b=5$,}\\
    b+2 & \text{if $c=3$, $a=4$ and $b>5$,}\\
    5 & \text{if $a \geq c>3$ and $b=5$.}
\end{cases}\]
If $[q^5t^3] \ge 7$, we must have one of the first two possibilities (which can be distinguished by $b+2>7$ for $b>5)$, corresponding to the fifth case in (\ref{eq.q5t3}); otherwise $[q^5t^3] = 5$ and we are in the sixth case in (\ref{eq.q5t3}).

From now on, we assume $m_1 > m_2 > 6$. Together with $a\ge c$, we have $a \ge 4,\ b\ge 6$ and $c\ge 3$. Based on whether $m_1 = b + 1$ or $m_2 = b + 1$, we split each of the last two cases in (\ref{eq.q5t3}) and get 4 possibilities:
\begin{equation}
\label{eq.q5t3.last.two}
[q^5t^3]=
\begin{cases}
    b+2=m_2+1 & \text{ if } c = 3 \text{ and } b < a+1\ (m_2 = b + 1),\\
    b+2=m_1+1 &  \text{ if } c = 3 \text{ and } b > a+1\ (m_1 = b + 1), \\
    b=m_1-1 &  \text{ if } c > 3 \text{ and } b > a+c-2\ (m_1 = b + 1),\\
    b=m_2-1 & \text{ if } c > 3 \text{ and } b < a + c - 2\ (m_2 = b + 1).
\end{cases}
\end{equation}
Note that among $m_1 + 1, m_1 - 1, m_2 + 1$ and $m_2 - 1$, the value $m_1 + 1$ is strictly the largest and $m_2 - 1$ is strictly the smallest. This means if $[q^5t^3] = m_1 + 1$, then we must be in the second possibility in (\ref{eq.q5t3.last.two}) (that is, the fifth case in (\ref{eq.q5t3})), and if $[q^5t^3] = m_2 - 1$, then we must be in the fourth possibility in (\ref{eq.q5t3.last.two}) (that is, the sixth case in (\ref{eq.q5t3})). Furthermore, if we also have $m_1 \neq m_2 + 2$, then we have $m_1 - 1 \neq m_2 + 1$. This enables us to distinguish between the first and third possibilities in (\ref{eq.q5t3.last.two}), corresponding to the fifth and sixth cases in (\ref{eq.q5t3}) respectively.

To finish the proof, we may now assume that $[q^5t^3] = m_2 + 1 = m_1 - 1$. Recall the formula of \Cref{prop:q6t3}:
\[
[q^6t^3]=\binom{\bar{b}}{2}+2(\bar{a} + \bar{c})+(\bar{a}+\bar{b}+4)\delta_{c=3}+2\delta_{a=4} + 2\delta_{c=4}.
\]

For a given total number of vertices, $a + b + c = m_1 + m_2$, there is a unique caterpillar with 
\[
\text{$c=3$\quad and \quad}[q^5t^3]=b+2=m_2+1=m_1-1=a+c-2
\] 
that is of the form $(a,b,c) = (x+1,x,3)$. We argue that the coefficient $[q^6t^3]$ for this caterpillar is strictly larger than any other caterpillars with the same $m_1,m_2$ and $[q^5t^3]$ values. Note we can write any other such caterpillar in the following form, since we must have that $c > 3$ (and so $m_1 = b + 1$):
\[
(x+2-c, x+2, c)=(m_2+1-c,m_1-1,c),
\]
where $x:=\frac{1}{2}(m_1+m_2-4)$. Thus, we must be in the third possibility in (\ref{eq.q5t3.last.two}). 
Since $b = x = a - 1 \ge 6$, the caterpillar $(x+1,x,3)$ has
\begin{equation}
\label{eq:q6t3 max}
[q^6t^3]\psgn{(x+1,x,3)}{3} =\binom{x-1}{2}+2(x+2)+(2x+3),
\end{equation}
while the caterpillar $\alpha:=(x + 2 - c,x + 2,c)$ has
\[
[q^6t^3]\ps_3 Z_{C_{\alpha}}(\mathbf{x};t) = \binom{x+1}{2} + 2x + 2\delta_{x + 2 - c = 4} + 2\delta_{c=4} \le \binom{x+1}{2}+2x+4.
\]
Hence, it suffices to show 
\[\binom{x-1}{2}+2(x+2)+(2x+3)>\binom{x+1}{2}+2x+4,\]
which follows from
\begin{align*}
    \frac{(x-1)(x-2)}{2}+4x+7 &= \frac{x^2-3x+2+4x-2}{2}+2x+8\\
    &=\binom{x+1}{2}+2x+8.
\end{align*}
Therefore, if $[q^6t^3]$ is strictly less than the value in (\ref{eq:q6t3 max}), we conclude that $c>3$; otherwise $c = 3$. This concludes the proof.
\end{proof}
\begin{example}
There are caterpillars for which it is necessary to examine $[q^6t^3]$ in order to recover the composition. For example, the caterpillars $(7,6,3)$ and $(4,8,4)$ have the same multiset $\{\{9,7\}\}$ and coefficient $[q^5t^3]$. In fact, the coefficient $[q^6t^3]$ is the lowest degree coefficient where they disagree:
\begin{align*}
\psgn{(7,6,3)}{3}&= \left(\dots+ 41 q^{6} + 8 q^{5} + q^{4}\right) t^{3} + \left(q^{25} + q^{23} + q^{18} + q^{14} + q^{9} + q^{7}\right) t^{2},\\
\psgn{(4,8,4)}{3}&=\left(\dots + 37 q^{6} + 8 q^{5} + q^{4}\right) t^{3} + \left(q^{25} + q^{23} + q^{18} + q^{14} + q^{9} + q^{7}\right) t^{2}.
\end{align*}
\end{example}

\begin{proposition}
\label{prop:multiset vals}
Algorithm~\ref{alg:midends} recovers the values $b$ and $a+c$ from $\ps_3(Z_{(a, b, c)}(\mathbf{x}; t))$ for all caterpillars of type $(a,b,c)$ with $\min\{a,b,c\}\geq 3$.
\end{proposition}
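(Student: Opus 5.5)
The plan is to check Algorithm~\ref{alg:midends} branch by branch against the case analysis already carried out in the proof of \Cref{lem:q5 recovers b}. Throughout, we use the known multiset $M=\{\{a+c-1,b+1\}\}$ from \Cref{lem:ac1b2} and the value $[q^5t^3]$ from \eqref{eq.q5t3}. Once $b$ is correctly determined, the output $\sum M-b=a+c$ is automatic, because $\sum M=a+b+c$. So it suffices to prove that each branch returns the true value of $b$.

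The proof proceeds in the order of the branches.

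\begin{enumerate}
\item If $4\in M$: since $a+c-1\ge5$ whenever $a,c\ge3$, the element $4$ must be $b+1$, so $b=3$.
\item If $\max(M)=\min(M)$: then $b+1=\min(M)$, so $b=\min(M)-1$.
\item If $5\in M$ but $4\notin M$: here $m_2=5$, and we use the three subcases listed in the lemma's proof. When $[q^5t^3]\ge7$, we have $a=c=3$ and $[q^5t^3]=b+4$, so $b=c_{53}-4$. When $[q^5t^3]\in\{4,6\}$, we have $b=4$.
\item If $6\in M$ and the earlier branches fail: then $m_2=6$. Values $c_{53}\in\{5,7\}$ give $b=5$. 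Otherwise $c=3$, $a=4$ and $c_{53}=b+2>7$, so $b=c_{53}-2$. One must check that the boundary value $c_{53}=7$ falls on the correct side of the test $c_{53}\le7$; it does, since it occurs only in the case $b=5$.
\item The final branch has $m_1>m_2>6$. Here we invoke the four possibilities in \eqref{eq.q5t3.last.two}.
\begin{itemize}
\item $c_{53}=m_1+1$ forces $b=c_{53}-2$.
\item $c_{53}=m_2-1$ forces $b=c_{53}$.
\item When $m_1\ne m_2+2$, the values $m_1-1$ and $m_2+1$ are distinct. Then $c_{53}=m_1-1$ corresponds to the case $b=c_{53}$, and otherwise $c_{53}=m_2+1$ with $b=c_{53}-2$.
\item When $m_1=m_2+2$, the $[q^6t^3]$ comparison applies, with $x=\frac12(\sum M-4)$. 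Equality with $\binom{x-1}{2}+2(x+2)+(2x+3)$ identifies the caterpillar $(x+1,x,3)$, so $b=x$. Strict inequality gives the form $(x+2-c,x+2,c)$, so $b=x+2=m_1-1=\max(M)-1$.
\end{itemize}
\end{enumerate}

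The main obstacle is making sure the branches of the algorithm are exhaustive and match the hypotheses of each subcase of the lemma. Two points need care.

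First, the test order in the algorithm must agree with the lemma's subcase hypotheses. The check $c_{53}=\max(M)+1$ before $c_{53}=\min(M)-1$ is harmless, because these values cannot coincide when $m_1>m_2$. The branches "$5\in M$" and "$6\in M$" are only reached with $m_2$ equal to $5$ or $6$ respectively; this holds because $\min(M)<\max(M)$ and $4\notin M$.

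Second, the last subcase uses \Cref{prop:q6t3}, whose hypotheses $a\ge4$ and $b>5$ must be verified for both candidate families. For $(x+1,x,3)$, we need $x\ge6$, which follows from $m_2>6$. For $(x+2-c,x+2,c)$, we have $b=x+2>5$ and $a\ge c>3$.

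I expect this second point, together with the possibility that $x$ is not an integer (which rules out the $(x+1,x,3)$ family), to be the only delicate bookkeeping. Everything else is a direct reading of the lemma.
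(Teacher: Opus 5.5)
Your proposal is correct and takes essentially the paper's route. The paper's proof just cites the six-case analysis of \Cref{lem:q5 recovers b} to get $b$ and then reads off $a+c$ from $M$; you make explicit how each branch of Algorithm~\ref{alg:midends} matches a subcase of that lemma's proof, including the hypotheses of \Cref{prop:q6t3}. One small correction to your closing remark: in the final sub-branch $\max(M)=\min(M)+2$, so $x=\min(M)-1$ is always an integer and the non-integrality worry never arises.
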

\begin{proof}
The cases where $m= \min\{a,b,c\}$ equals 1 or 2 are handled in the previous two subsections. For $m\geq 3$, \Cref{lem:q5 recovers b} allows us to recover $b$ by distinguishing among the six possible cases for the value of $[q^5t^3]$. Then, comparing the value of $b$ with the multiset $\{\{b+1,a+c-1\}\}$, we can immediately obtain $a+c$.
\end{proof}
\begin{remark}
In Lemma~\ref{lem:q5 recovers b}, if we are in the 1st, 2nd or the 4th cases, we can recover the values of $a$ and $c$ as well. However, if we are in the 3rd, 5th or 6th cases, we need  more data to recover the caterpillar. For example, the caterpillars $(10,3,5), (9,3,6),$ and $(8,3,7)$ are all in the 3rd case, and they agree on their multiset consisting of $[q^3t^3],\ [q^4t^3],\ [q^5t^3]$ and $[q^6t^3]$.
\end{remark}

\begin{proposition}
\label{prop:q_m_plus_2}
Suppose $a,b,c\geq m$ for some $m\geq 3$. By Proposition~\ref{prop:multiset vals} the values of $a+c$ and $b$ are known. Define a function $F:\Z^2\to \Z$ by
\[F_m(\sigma,\tau)=\tau \delta_{\tau+1=m}+\sum_{j=0}^{\frac{m-2}{2}}\binom{\sigma}{j}\binom{\tau}{m-2-2j}+\sum_{j=0}^{\frac{m-3}{2}}\binom{\tau}{j}\binom{\sigma}{m-3-2j}.
\]
Then the number of values in $a,b,c$ that are equal to $m$ is 
\[
\frac{1}{2}\left([q^{m+2}t^3]- F_m(\bar{a}+\bar{c},\bar{b})\right).
\]      

\end{proposition}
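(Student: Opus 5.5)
The approach is to count directly, from \eqref{eq:psnZG-interpretation}: $[q^{m+2}t^3]$ is the number of proper colorings $\kappa:V\to\{\mb{0},\mb{1},\mb{2}\}$ of $C_{(a,b,c)}$ that use all three colors and have weight exactly $m+2$. Write the spine as $v_A-v_B-v_C$, with leaf sets $A,B,C$ of sizes $\bar a,\bar b,\bar c\ge m-1$. Since the weight budget $m+2$ is small compared to the leaf sets, most vertices must be colored $\mb{0}$. I would split the count by the colors $(\kappa(v_A),\kappa(v_B),\kappa(v_C))$, using the rule that a spine vertex colored $\mb{0}$ forces every one of its leaves to be nonzero.

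\textbf{Case $\kappa(v_B)\neq\mb{0}$.} The subcases with $\kappa(v_A)=\mb{0}$ or $\kappa(v_C)=\mb{0}$ force all of $A$ (or $C$) to be nonzero, which costs at least $\bar a\ge m-1$. I expect these to contribute only in boundary situations, namely when $a=m$ or $c=m$, and each such boundary situation should give a fixed small number of colorings. The main generic subcase is $\kappa(v_A),\kappa(v_C),\kappa(v_B)$ all nonzero, with $\kappa(v_B)\neq\kappa(v_A),\kappa(v_C)$. Then $v_A,v_C$ share the color opposite to $v_B$. Each leaf in $A\cup C$ may be $\mb{0}$ or take the color of $v_B$, and each leaf in $B$ may be $\mb{0}$ or take the color of $v_A$. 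Distributing the remaining weight between the $\sigma=\bar a+\bar c$ outer leaves and the $\tau=\bar b$ middle leaves gives sums of the form $\sum_j\binom{\sigma}{j}\binom{\tau}{\,\cdot\,}$. The two parity patterns, $\kappa(v_B)=\mb{1}$ with $\kappa(v_A)=\kappa(v_C)=\mb{2}$, and $\kappa(v_B)=\mb{2}$ with $\kappa(v_A)=\kappa(v_C)=\mb{1}$, should produce exactly the two binomial sums in $F_m(\sigma,\tau)$, with the step $2j$ coming from leaves colored $\mb{2}$.

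\textbf{Case $\kappa(v_B)=\mb{0}$.} All $\bar b$ leaves of $B$ are nonzero, costing at least $\bar b$, and $\kappa(v_A)+\kappa(v_C)\ge 2$. So the weight is at least $\bar b+2$, which is at most $m+2$ only when $b\le m+1$. I would count precisely: when $\bar b=m-1$, the $B$-leaves have slack $1$ and produce the $\tau\delta_{\tau+1=m}$ term, from choosing one leaf to recolor. The remaining configurations in which $b=m$ should give the extra constant $2$.

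\textbf{Assembling.} After the generic count is identified with $F_m(\bar a+\bar c,\bar b)$, the claim reduces to showing that each part among $a,b,c$ equal to $m$ contributes exactly $2$ extra colorings, and that parts exceeding $m$ contribute nothing. The factor $2$ should be the swap $\mb{1}\leftrightarrow\mb{2}$ on a forced configuration in which some spine vertex is $\mb{0}$ and all of its $m-1$ leaves are $\mb{1}$, while two further vertices carry the remaining weight $3$.

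The main obstacle is the boundary bookkeeping, and I would do it first for $m=3$ and $m=4$. For $m=3$, the configuration $\kappa(v_A)=\kappa(v_C)=\mb{0}$ costs $1+\bar a+\bar c$, which can equal $5=m+2$ when $a=c=3$. Such cross terms must be shown either to be absorbed into the $\delta$ or binomial terms, or to total exactly $2$ per minimal part. I would check the resulting formula against the values of $[q^5t^3]$ from \Cref{lem:q5 recovers b} (the case $m=3$) as a consistency test. For $m=4$, I would check it against $[q^6t^3]$ from \Cref{prop:coeff-of-q6t3}, and only then write the general argument.
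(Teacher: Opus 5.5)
Your split by the colors of $(v_A,v_B,v_C)$ is the paper's, and you match the pieces correctly:
\begin{itemize}
\item $\langle\mathbf{1},\mathbf{2},\mathbf{1}\rangle$ and $\langle\mathbf{2},\mathbf{1},\mathbf{2}\rangle$ give the two binomial sums in $F_m$;
\item $\langle\mathbf{1},\mathbf{0},\mathbf{1}\rangle$ with $b=m$ gives $\bar b\,\delta_{b=m}$;
\item each spine coloring with three distinct colors forces its $\mathbf{0}$-vertex to have exactly $m-1$ leaves, all colored $\mathbf{1}$, which gives $2(\delta_{a=m}+\delta_{b=m}+\delta_{c=m})$.
\end{itemize}

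The gap is at the step you yourself call the main obstacle, and the resolution you anticipate is not the right one. Two configurations reach weight exactly $m+2$ and are not excluded by your weight bounds. The first is the spine $\langle\mathbf{0},\mathbf{1},\mathbf{0}\rangle$ when $m=3$ and $a=c=3$: all four outer leaves are $\mathbf{1}$ and all of $B$ is $\mathbf{0}$. The second is the spine $\langle\mathbf{1},\mathbf{0},\mathbf{1}\rangle$ when $b=m+1$: all $m$ leaves of $B$ are $\mathbf{1}$ and all other leaves are $\mathbf{0}$. This second one is precisely the case $b=m+1$ that your bound $b\le m+1$ leaves open, while your assembly step simply asserts that parts exceeding $m$ contribute nothing.

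Neither configuration can be ``absorbed into the $\delta$ or binomial terms'' or ``total exactly $2$ per minimal part.'' Each is a single coloring, so counting it would make $[q^{m+2}t^3]-F_m(\bar a+\bar c,\bar b)$ odd and the proposition false. The missing observation is that in both cases the unique coloring of weight $m+2$ uses only the colors $\mathbf{0}$ and $\mathbf{1}$. It is the bipartition coloring, so it contributes to $[t^2]$ and not to $[t^3]$. This is exactly how the paper disposes of both cases.

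The same three-color requirement must also be checked in the generic cases. There it holds because $|A\cup C|\ge 2(m-1)$ and $|B|\ge m-1$ force some leaf to stay $\mathbf{0}$. With that in place, the boundary counts you left as ``should'' statements are immediate, and no consistency checks against the $m=3,4$ formulas are needed.
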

\begin{proof}
    Subtracting $F_m(\bar{a} + \bar{c}, \bar{b})$ from the formula for $[q^{m+2}t^3]$ given in Appendix~\ref{proof.q_m_plus_2} reveals that
    \begin{equation}[q^{m+2}t^3] - F_m(\bar{a}+\bar{c},\bar{b})= 2(\delta_{a = m} + \delta_{b = m} + \delta_{c = m}) = : \chi(a,b,c;m) = \begin{cases}
        6&a=b=c=m,\\
        4&b=c=m,\ a>c,\\
        4&a=c=m,\ b>m,\\
        2&b=m,\ a,c>m,\\
         2&c=m,\ a,b>m,\\
        0&a,b,c>m.
    \end{cases}
    \end{equation}
        Since the quantities $\bar{a}+\bar{c}$ and $\bar{b}$ are known from \Cref{prop:multiset vals} when $a,b,c\geq m\geq 3$, we know the value of $F_m(\bar{a}+\bar{c},\bar{b})$ and thus we also know the quantity $\chi(a,b,c;m).$ Since $b$ is in particular known, we may conclude

         \[(a,b,c)=\begin{cases}
        (m,m,m)&\chi(a,b,c;m)=6,\\
        (a,m,m)&\chi(a,b,c;m)=4,\ b=m,\\
        (m,b,m)&\chi(a,b,c;m)=4,\ b>m,\\
         (a,m,c)&\chi(a,b,c;m)=2,\ b=m,\\
        (a,b,m)&\chi(a,b,c;m)=2,\ b>m,\\
        (a,b,c)&\chi(a,b,c;m)=0,
    \end{cases}\]
    where in the above, if we do not have $m$ in a coordinate position, then that coordinate is strictly larger than $m$. Thus, we have determined the locations of the $m$'s in $(a,b,c)$, if any.
\end{proof}

\begin{remark}
\label{rem.min}
Proposition~\ref{prop:q_m_plus_2} allows us to determine the value $\min\{a,b,c\}$ in the following way: initialize $m = 3$; if $[q^{m+2}t^3] - F_m(\bar{a}+\bar{c},\bar{b}) = 0$, set $m = m+1$ and repeat the process until the first time this value becomes positive. Once the value of $m$ is determined, we can further determine its multiplicity in $(a,b,c)$ as follows.
 
\begin{enumerate}
\item \label{case.triple.double.min} If $[q^{m+2}t^3] - F_m(\bar{a}+\bar{c},\bar{b})$ is equal to $6$ or $4$, the multiplicty of $m$ is 3 or 2, and we have recovered $a$ and $c$.
\item \label{case.single.min.not.b} If $[q^{m+2}t^3] - F_m(\bar{a}+\bar{c},\bar{b})$ is equal to $2$ and $b > m$, the multiplicity of $m$ is 1, and we have also recovered values $a$ and $c$.
\item \label{case.single.min.b}When $[q^{m+2}t^3] - F_m(\bar{a}+\bar{c},\bar{b}) = 2$ and $b = m$, the multiplicity of $m$ is 1 but we still don't know the values for $a$ and $c$ separately.
\end{enumerate}
The rest of the section will deal with the final case where the minimum value  $\min\{a,b,c\}$ occurs exactly once and at $b$. Recovering the caterpillars in this case requires us to look at a higher degree coefficient, $[q^{b+4}t^3]$.
\end{remark}

\begin{proposition}
\label{prop:q_m_plus_4}
Suppose $a \geq c > b \geq 3$. Define three functions: 
\begin{align}\label{eq:Gqm4}
    G(\bar{a}+\bar{c},\bar{b}) &= \binom{\bar{b}}{3} +\bar{b}\left(\bar{a} + \bar{c}\right)  + (\bar{a} + \bar{b}+\bar{c})
    +\sum_{j=1}^{\frac{\bar{b}+1}{2}}\binom{\bar{a}+\bar{c}}{j}\binom{\bar{b}}{\bar{b}+1-2j}+\sum_{j=0}^{\frac{\bar{b}}{2}}\binom{\bar{b}}{j}\binom{\bar{a}+\bar{c}}{\bar{b}-2j}
\end{align}

\begin{equation}\label{eq:psiqm4}
\psi(a,c;b)=
   \begin{cases}
  8b-2  &    a = c = b + 1,\\
  4b+2  &    a = b + 2, c = b + 1,\\
  a+3b-2 &  a > b + 2, c = b + 1,\\
   4    &   a = c = b + 2,\\
   2    &   a > b + 2 = c,\\
   0    &   a,c > b+2,
\end{cases}
\end{equation}
and 
\begin{equation}\label{eq:rhoqm4}
     \rho(a,c;b)= \bar{a}+\bar{c}+\binom{\bar{a}+\bar{b}}{2}+\binom{\bar{c}+\bar{b}}{2}.\end{equation}
Then 
\[
[q^{b+4}t^3]=G(\bar{a}+\bar{c},\bar{b})+\psi(a,c;b)+ \rho(a,c;b).
\]
\end{proposition}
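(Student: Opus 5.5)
We count proper colorings $\kappa$ of $C_{(a,b,c)}$ with colors $\{\mb{0},\mb{1},\mb{2}\}$, weight $b+4$, and all three colors used. Throughout, $a\geq c>b\geq 3$.

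\textbf{Setup.} Label the spine $v_A - v_B - v_C$, with leaf sets $A$, $B$, $C$ of sizes $\bar a$, $\bar b$, $\bar c$. A coloring is then specified by a triple $(\kappa(v_A),\kappa(v_B),\kappa(v_C))$ together with leaf colors from the two colors that differ from the parent's color. The weight $b+4$ is small relative to $\bar a+\bar c$, but not relative to $\bar b$. So any spine vertex colored nonzero forces most of its neighbours' leaves to be $\mb{0}$, and the budget $b+4=\bar b+5$ limits how many nonzero leaves may occur.

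\textbf{Case split on the spine triple.} I would split according to the spine triple, for instance:
\begin{itemize}
\item $(\mb{0},x,\mb{0})$ with $x\in\{\mb{1},\mb{2}\}$, where the leaves of $B$ may be $\mb{0}$ or the remaining color;
\item $(x,\mb{0},y)$, where all of $B$ is nonzero, costing at least $\bar b$;
\item triples with $v_B$ nonzero and one of $v_A,v_C$ nonzero.
\end{itemize}

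\textbf{Extracting the terms of the formula.}
\begin{itemize}
\item Triples with $v_B=\mb{0}$ force all $\bar b$ leaves of $B$ into $\{\mb{1},\mb{2}\}$, of weight between $\bar b$ and $2\bar b$. The residual budget is then distributed among $v_A$, $v_C$, the extra $\mb{2}$'s in $B$, and the nonzero leaves of $A\cup C$. Counting these should produce the binomial sums $\sum_j\binom{\bar a+\bar c}{j}\binom{\bar b}{\bar b+1-2j}$ and $\sum_j\binom{\bar b}{j}\binom{\bar a+\bar c}{\bar b-2j}$, mirroring $F_m$ in Proposition~\ref{prop:q_m_plus_2}. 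They depend only on $\bar a+\bar c$ because leaves of $A$ and $C$ are interchangeable whenever $v_A$ and $v_C$ receive the same color.
\item The terms $\binom{\bar b}{3}$, $\bar b(\bar a+\bar c)$, and $\bar a+\bar b+\bar c$ should come from the triples $(\mb{0},\mb{1},\mb{0})$ and $(\mb{0},\mb{2},\mb{0})$, where small numbers of leaves receive nonzero colors, together with the requirement that color $\mb{2}$ actually appears.
\item The terms $\binom{\bar a+\bar b}{2}+\binom{\bar c+\bar b}{2}$ in $\rho$ should come from spine triples with $v_A\neq v_C$. An example is $(\mb{1},\mb{0},\mb{2})$ or its variants, where $B$ is forced nonzero and two further units of weight are spread over $A\cup B$ or $C\cup B$. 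These depend on $a$ and $c$ separately.
\end{itemize}

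\textbf{Boundary corrections $\psi$.} These arise exactly when $\bar c$ is small compared with $\bar b$, namely $c\in\{b+1,b+2\}$. In that range, colorings that put all of $C$ (or all of $A$) at a nonzero color, such as $v_C=\mb{0}$ with $C$ entirely $\mb{1}$, fit inside the budget $b+4$. For $c=b+1$ such colorings have slack $2$ or $3$; for $c=b+2$ the slack is $0$. Enumerating these few configurations case by case should give the six values of $\psi$.

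\textbf{Main obstacle.} I expect the hardest part to be the exhaustive bookkeeping. In particular, one must ensure that the slack configurations near $c=b+1$ or $c=b+2$ are not double-counted with the generic binomial sums. One must also exclude colorings that use only two colors; these are the bipartition colorings of Proposition~\ref{prop:basic-properties-psn(Z_G)}, whose weights $b+1$ and $a+c-1$ times small factors do not equal $b+4$ here except in edge cases, which need checking.

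To control all of this, I would first verify the formula numerically for small $b$ (say $b=3,4$) and several $(a,c)$ using the SageMath code. After that, I would organize the count as a generating-function computation: take a product over leaf classes of $(q+q^2)$ or $(1+q^2)$, depending on the parent's color, extract the coefficient of $q^{b+4}$, and subtract the two-color terms.
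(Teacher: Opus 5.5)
Your framework, splitting on the spine triple and counting leaf colorings against the budget $b+4$, is the paper's. However, the proposal never performs the enumeration, and the places you say each term ``should come from'' are wrong, so carrying out the plan as written would not reproduce the formula.

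If $v_B=\mb{0}$, the $\bar b$ leaves of $B$ already cost $\bar b=b-1$ and the spine costs at least $2$, so at most $3$ units remain. Such triples give only small terms like $\binom{\bar b}{3}$, $\bar b(\bar a+\bar c)$, $\binom{\bar b+\bar c}{2}$. Moreover, every coloring counted by the two binomial sums has a leaf of $B$ colored $\mb{0}$, which is impossible when $v_B=\mb{0}$. As with the analogous sums in $F_m$, these sums come from two other triples:
\begin{itemize}
\item $\mb{\lan 1,2,1\ran}$: leaves total $b$; choose $j$ leaves of $A\cup C$ to be $\mb{2}$ and $b-2j$ leaves of $B$ to be $\mb{1}$, where $b-2j\le\bar b$ forces $j\ge 1$.
\item $\mb{\lan 2,1,2\ran}$: leaves total $\bar b$; $j$ leaves of $B$ are $\mb{2}$ and $\bar b-2j$ leaves of $A\cup C$ are $\mb{1}$.
\end{itemize}
Conversely, $\mb{\lan 0,1,0\ran}$ and $\mb{\lan 0,2,0\ran}$ contribute nothing. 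All $\bar a+\bar c\ge 2b$ leaves of $A\cup C$ are forced nonzero, so the weight is at least $2b+1\ge b+4$. Equality holds only for $(4,3,4)$ under $\mb{\lan 0,1,0\ran}$ with $A\cup C$ all $\mb{1}$ and $B$ all $\mb{0}$, which is a $2$-coloring. The terms $\binom{\bar b}{3}+\bar b(\bar a+\bar c)$ actually come from $\mb{\lan 1,0,1\ran}$: slack $3$, and with $k$ leaves of $B$ raised to $\mb{2}$ and $j$ leaves of $A\cup C$ colored $\mb{2}$ one needs $k+2j=3$. The term $\bar a+\bar b+\bar c$ comes from $\mb{\lan 2,0,2\ran}$, with slack $1$.

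Your $\psi$ analysis also has an arithmetic error that matters. Under $\mb{\lan 1,2,0\ran}$ or $\mb{\lan 2,1,0\ran}$ the spine weighs $3$ and $C$ is forced to weigh at least $\bar c$, so the slack is $b+1-\bar c$. That is $1$, not $2$ or $3$, when $c=b+1$, and $0$ when $c=b+2$. Slack $1$ is exactly why $\psi$ is linear. For $c=b+1$, $\mb{\lan 1,2,0\ran}$ gives $\bar c+\bar b$ (one leaf of $C$ raised to $\mb{2}$, or one leaf of $B$ set to $\mb{1}$) and $\mb{\lan 2,1,0\ran}$ gives $\bar c+\bar a$; for $c=b+2$ each gives $1$. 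Treating $\mb{\lan 0,2,1\ran}$ and $\mb{\lan 0,1,2\ran}$ the same way with $A$ in place of $C$ yields $\psi=(2\bar a+\bar b+\bar c)\delta_{a=b+1}+(\bar a+\bar b+2\bar c)\delta_{c=b+1}+2\delta_{a=b+2}+2\delta_{c=b+2}$. Finally, $\rho$ comes from $\mb{\lan 1,0,2\ran}$ and $\mb{\lan 2,0,1\ran}$, with slack $2$, giving e.g.\ $\bar a+\binom{\bar b+\bar c}{2}$.

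The three-color condition is easiest to check triple by triple rather than by subtracting bipartition colorings. Under $\mb{\lan 1,2,1\ran}$ and $\mb{\lan 2,1,2\ran}$, fewer than $2b$ leaves of $A\cup C$ are nonzero, so some leaf stays $\mb{0}$. Under $\mb{\lan 2,0,2\ran}$, some leaf of $B$ is $\mb{1}$ because $\bar b\ge 2$. Numerical checks for small $b$ do not replace this enumeration. Your proposed generating function also omits the factor $(1+q)$ for leaves whose parent is $\mb{2}$.
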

The proof of Proposition~\ref{prop:q_m_plus_4} is in  Appendix~\ref{proof.q_m_plus_4}.

\begin{proposition}
\label{prop:recover from m3}
If $a,b,c \geq 3$, then we can recover the caterpillar $(a,b,c)$ from $\ps_3(Z_{(a,b,c)}(\mathbf{x};t))$.
\end{proposition}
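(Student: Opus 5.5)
By Proposition~\ref{prop:multiset vals} we already know $b$ and $s:=a+c$ from $\ps_3(Z_{(a,b,c)}(\mathbf{x};t))$. Remark~\ref{rem.min}, applied with Proposition~\ref{prop:q_m_plus_2}, then gives the minimum $m=\min\{a,b,c\}$ and its multiplicity. In every case except case~(\ref{case.single.min.b}), the triple is already determined. So we may assume that $b=m$ is the unique minimum, that is, $a\geq c>b\geq 3$. What remains is to recover the split of the known sum $a+c$ into $a$ and $c$, and for this I would use Proposition~\ref{prop:q_m_plus_4} with the coefficient $[q^{b+4}t^3]$.

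Since $\bar a+\bar c$ and $\bar b$ are known, $G(\bar a+\bar c,\bar b)$ is a known number. Subtracting it from $[q^{b+4}t^3]$ leaves the known quantity
\[
h(c):=\psi(a,c;b)+\rho(a,c;b).
\]
Here $a=s-c$, and $c$ ranges over $b+1\leq c\leq s/2$. Write $\bar a+\bar c=:z$, which is fixed. Then the $\rho$ term is
\[
z+\binom{\bar a+\bar b}{2}+\binom{\bar c+\bar b}{2}.
\]
Set $n:=z+2\bar b$, so that $(\bar a+\bar b)+(\bar c+\bar b)=n$. By Lemma~\ref{lem:antiunimodal binomials} with $r=\bar c+\bar b\leq n/2$, the quantity $\binom{n-r}{2}+\binom{r}{2}$ is strictly decreasing as $c$ increases toward $s/2$. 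So $\rho$ alone would determine $c$ uniquely, by solving a quadratic and taking the smaller root for $\bar c+\bar b$.

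The correction $\psi$ is nonzero only when $c\in\{b+1,b+2\}$, and there it is nonnegative. I would split into three regimes: $c=b+1$, $c=b+2$, and $c\geq b+3$. In the last regime $\psi=0$, and the strict monotonicity of $\rho$ gives injectivity. For the first two regimes, $c$ is smaller than any $c'\geq b+3$, so $\rho(c)>\rho(c')$ by monotonicity. Adding $\psi\geq 0$ only increases $h(c)$, so $h(b+1)$ and $h(b+2)$ are both strictly larger than every value $h(c')$ with $c'\geq b+3$.

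It then remains to check that $h(b+1)\neq h(b+2)$ whenever both values are admissible for the same $s$. I would verify this directly. Going from $c=b+2$ to $c=b+1$, the $\rho$ part changes by
\[
\binom{\bar a+\bar b+1}{2}-\binom{\bar a+\bar b}{2}-\Big(\binom{\bar c+\bar b+1}{2}-\binom{\bar c+\bar b}{2}\Big)=\bar a-\bar c+1>0,
\]
with $a,c$ taken at the $c=b+2$ values. The $\psi$ part goes from $4$ or $2$ up to $8b-2$, $4b+2$, or $a+3b-2$, each of which is at least $2$ since $b\geq 3$. So $h(b+1)>h(b+2)$, and $h$ is strictly decreasing in $c$ on the whole admissible range. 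Hence $c$ is recovered as the unique admissible value with $h(c)=[q^{b+4}t^3]-G$, and then $a=s-c$.

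The main obstacle is the boundary bookkeeping. The $\psi$ cases depend on $a$ as well as $c$: for example, $a=b+2$ versus $a>b+2$ when $c=b+1$, and the configurations where $a=c$. I must make sure that each admissible $c$ falls in exactly one $\psi$ case, and that the strict inequalities still hold when $s$ is small, so that only one or two values of $c$ are possible. I would handle these small-$s$ cases by direct comparison.
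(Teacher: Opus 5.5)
Your proposal is correct and follows the paper's proof: both subtract the known $G(\bar a+\bar c,\bar b)$ from $[q^{b+4}t^3]$, use Lemma~\ref{lem:antiunimodal binomials} for the $\rho$ part and the explicit values of $\psi$, and separate the cases $c=b+1$, $c=b+2$, $c\ge b+3$; the only difference is that the paper splits on $(a+c)+b$ versus $3b$ and writes closed forms in $z$ and $\bar b$, whereas you package the same comparisons as strict monotonicity of $h$ in $c$. There are two small slips: the second bracket in your $\rho$-increment should be $\binom{\bar c+\bar b}{2}-\binom{\bar c+\bar b-1}{2}$ (as displayed, the left side equals $\bar a-\bar c$, not $\bar a-\bar c+1$), and ``at least $2$'' does not show that $\psi$ increases from $4$ --- the correct reason is that when both $c=b+1$ and $c=b+2$ are admissible, $a\ge b+3$ at $c=b+1$, so $\psi=a+3b-2\ge 4b+1>4$.
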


\begin{proof}
First, we recover the values of $b$ and $a+c$ according to \Cref{prop:multiset vals}. Then, we follow Remark~\ref{rem.min} to get $\min\{a,b,c\}$ using Proposition~\ref{prop:q_m_plus_2}. If we are in the cases (\ref{case.triple.double.min}) or (\ref{case.single.min.not.b}) of \Cref{rem.min}, we have recovered the values of $a$ and $c$. We now discuss the case (\ref{case.single.min.b}), i.e, $a,c > b \ge 3$, using Proposition~\ref{prop:q_m_plus_4}.

First from \Cref{prop:q_m_plus_4}, we have 
\begin{equation}
\label{eq.m_plus_4_recover}   
\psi(a,c;b)+\binom{\bar{a}+\bar{b}}{2}+\binom{\bar{c}+\bar{b}}{2}=[q^{b+4}t^3]-G(\bar{a}+\bar{c},\bar{b})-\bar{a}-\bar{c}.
\end{equation}
We can compute the value, $R$, of the right-hand side of Equation~(\ref{eq.m_plus_4_recover}) as we know $[q^{b+4}t^3]$, $b$, and $a + c$. 

Next, we simplify the left-hand side of Equation~(\ref{eq.m_plus_4_recover}) in each case using the definition of $\psi$, and get
\[
R=
\begin{cases}
4\bar{b}^2+10\bar{b}+6&a=c=b+1,\\
4\bar{b}^2+8\bar{b}+7& a=b+2, c=b+1,\\
2\bar{b}^2+4\bar{b}+2+\bar{a}+\binom{\bar{a}+\bar{b}}{2}&a>b+2, c=b+1,\\
4\bar{b}^2+6\bar{b}+6&a=c=b+2,\\
2\bar{b^2}+3\bar{b}+3+\binom{\bar{a}+\bar{b}}{2}& a>b+2 = c,\\
\binom{\bar{a}+\bar{b}}{2}+\binom{\bar{c}+\bar{b}}{2}&a,c>b+2.
\end{cases}
\] 
Let $z=\bar{a}+\bar{c}$ and these quantities become
\begin{equation}
\label{eq.final.six}
R=
\begin{cases}
4\bar{b}^2+10\bar{b}+6&a=c=b+1,\\
4\bar{b}^2+8\bar{b}+7&a=b+2, c=b+1,\\
2\bar{b}^2+3\bar{b}+2+\binom{z}{2}&a>b+2, c=b+1,\\
4\bar{b}^2+6\bar{b}+6&a=c=b+2,\\
2\bar{b^2}+3\bar{b}+3+\binom{z-2}{2}& a>b+2 = c,\\
\binom{\bar{c}+\bar{b}}{2}+\binom{z-\bar{c}+\bar{b}}{2}&a,c>b+2.
\end{cases}
\end{equation}

To distinguish among the above six cases, we discuss in terms of the values $(a+c) + b$.
\begin{enumerate}
\item If $(a+c) + b = 3b+2$, then we must be in the first case in~\ref{eq.final.six} and we have recovered the caterpillar. 
\item If $(a+c) + b = 3b+3$, then we must be in the second case in~\ref{eq.final.six} and we have recovered the caterpillar. 
\item If $(a+c) + b = 3b+4$, we need to determine whether it is the third or the four case. Notice when $a > b+2$ and $c = b+1$, we have
\[
2\bar{b}^2+3\bar{b}+2+\binom{z}{2} \ge 2\bar{b}^2+3\bar{b}+2 + \binom{2\bar{b}+4}{2} = (4\bar{b}^2 + 6\bar{b} + 6) + 4\bar{b} + 2.
\]
Therefore, if $R > 4\bar{b}^2 + 6\bar{b} + 6$,
then we are in the third case. If $R = 4\bar{b}^2 + 6\bar{b} + 6$ then we are in the fourth case. In either case, we have recovered the caterpillar.
\item If $(a+c) + b \ge 3b + 5$, we necessarily have $a \ge b + 3 > b + 2$ and we need to determine if we are in the third, fifth or the sixth case in~\ref{eq.final.six}. Note that we have the following strict inequalities since $z = \bar{a} + \bar{c} \ge 2\bar{b} + 5 \ge 9$:
\begin{align*}
2\bar{b}^2+3\bar{b}+2+\binom{z}{2}
&= 2\bar{b^2}+3\bar{b}+3+\binom{z-2}{2} + 2(z-2)\\
&> 2\bar{b^2}+3\bar{b}+3+\binom{z-2}{2} \\&= \binom{(\bar{b}+2)+\bar{b}}{2}+\binom{z-(\bar{b}+2)+\bar{b}}{2}+2\\
&> \binom{\bar{c}+\bar{b}}{2}+\binom{z-\bar{c}+\bar{b}}{2}
\end{align*}
where the last inequality follows from Lemma~\ref{lem:antiunimodal binomials} with 
\[
(\bar{b}+2) + \bar{b} \le \bar{c} + \bar{b} \le \dfrac{\bar{a}+\bar{c}}{2} + \bar{b} =  \dfrac{z}{2} + \bar{b}
\]
when we necessarily have $c \ge b + 2$. Therefore it suffices to compare $R$ with the values 
\[
2\bar{b}^2+3\bar{b}+2+\binom{z}{2},\  2\bar{b^2}+3\bar{b}+3+\binom{z-2}{2}, \text{ and } \binom{\bar{c}+\bar{b}}{2}+\binom{z-\bar{c}+\bar{b}}{2}
\]
to determine if we are in the third, fifth or the six case of~\ref{eq.final.six}. If we are in the third or fifth case, we have recovered the caterpillar. If we are in the sixth case, we obtain a quadratic equation with one variable $\bar{c}$:
\[
R = \binom{\bar{c}+\bar{b}}{2}+\binom{z-\bar{c}+\bar{b}}{2}.
\]
Solving this equation results in two (possibly equal) roots and they are the values for $\bar{a}$ and $\bar{c}$ respectively.\qedhere
\end{enumerate}
\end{proof}

With this finally done, we have proven our main theorem.

\begin{proof}[Proof of Theorem~\ref{thm:MainTheorem}]
The $b=1$ case is provided by \Cref{prop:b1 recovery}. The $m=\min\{a,b,c\}=2$ case is given by \Cref{prop:recovering_from_m2}, and $m\geq 3$ by \Cref{prop:recover from m3}.
\end{proof}

\section{Discussion and future directions}\label{sec:future-directions}
We showed $\ps_3(Z_T(\mathbf{x};t))$ distinguishes some infinite families of trees, but this will not hold in general, even for caterpillars. In SageMath, one can compute the values of $\ps_3(Z_{C_{(3, 4, 2, 3)}})$ and $\ps_3(Z_{C_{(4, 2, 3, 3)}})$ to be 
\begin{align*}
   f&= t^2(q^{19} + q^{17} + q^{14} + q^{10} + q^7 + q^5)\\
    &\quad +t^3(2 q^{19} + 18 q^{18} + 71 q^{17} + 204 q^{16} + 438 q^{15} + 749 q^{14} + 1023 q^{13} + 1128 q^{12} + 1023 q^{11} \\
    &\quad + 749 q^{10} + 438 q^{9} + 204 q^{8} + 71 q^{7} + 18 q^{6} + 2 q^{5})
\end{align*}
However, we have been unable to find two nonisomorphic caterpillars with the same $\ps_4(Z_T(\mathbf{x};t))$.
\begin{conj}\label{conj:ps_4Z}
  $\ps_4(Z_T(\mathbf{x};t))$ distinguishes caterpillars.
\end{conj}
What is more, it seems this is likely to hold at $t=1$.
\begin{conj}\label{conj:ps_4X}
  $\ps_4(X_T(\mathbf{x}))$ distinguishes caterpillars.
\end{conj}

We have verified the above conjectures on caterpillars with at most $25$ vertices, as well as caterpillars with 5 internal vertices with 30, 35, 40, 45 and 50 total vertices. In fact, we have been unable to find two nonisomorphic trees with fewer than 26 vertices that have the same $\ps_4(X_T(\mathbf{x}))$. We also record $t$-versions of the Loehr--Warrington and Bajo--Beck--Vindas--Melendes conjecture. These are strictly weaker than their counterparts. 

\begin{conj}
    For each $n\geq 2$, $\ps_n(Z_T(\mathbf{x};t))$ distinguishes trees with at most $n$ vertices.
\end{conj}

\begin{conj}
    $\ps(Z_T(\mathbf{x};t))$ distinguishes trees.
\end{conj}  

\subsection*{Acknowledgments} This material is based on work supported by the National Science Foundation under Grant Number
DMS-1916439 while the authors were participating in the Mathematics Research Communities (MRC) 2024
Summer Conference at Beaver Hollow Conference Center in Java Center, New York. We extend our thanks
to the American Mathematical Society for their support. A. Bingham was partially supported by the grant DST/INT/RUS/RSF/P-41/2021 from the Department of Science \& Technology, Govt. of India, and FONDECYT-ANID grant 3250472. P. Commins was partially supported by an NSF graduate research fellowship and was in residence at the Simons Laufer Mathematical Sciences Institute for a portion of this work.  R. Orellana was partially supported by NSF grant DMS-2153998. 
This material is also based upon work supported by the National Science Foundation, while A. Bingham, P. Commins, R. Orellana and J. Pan were in residence at the ICERM semester program ``Categorification and Computation in Algebraic Combinatorics'' in Fall 2025. We thank Robert Angarone and Matthias Beck for helpful conversations. We also thank Jonathan Buck for sharing code.

\subsection*{Statement on AI use} AI was used to generate first drafts of the pseudocode for Algorithms \ref{alg:recover-abc} and \ref{alg:midends} and the TikZ code for \Cref{tab:ps2-class-example}. The code from Jonathan Buck was generated by AI models, which we used to check our Conjectures~\ref{conj:ps_4Z} and \ref{conj:ps_4X} for higher orders. All other code was produced by the authors, including the code used to make the conjectures.  We did not use any AI for exploration, brainstorming, computation, mathematical reasoning, proving, writing, proofreading, or in any other capacity.


\appendix
\section{Counting elongated bistar colorings}\label{sec:Append.A}
Here we prove some auxiliary lemmas that are used in \Cref{sec:bistars}. Recall that for the elongated bistar $(a,1^r,b)$, we have the leaf sets $A$, $B$, and spine vertices $v_A$ and $v_B$. We will compute certain coefficients $[q^st^3]\ps_3(Z_{(a,1^r,b)})$ by counting the contributions from each set of colorings where $v_A$ and $v_B$ are assigned fixed colors from among $\{\mb{0,1,2}\}$. These \textbf{endpoint colorings} will be denoted, for example, $\langle \mb{2},\mb{0}\rangle$ to indicate that $v_A$ is colored $\mb{2}$ and $v_B$ is colored $\mb{0}$.

\begin{lemma}\label{lem:r_by_2_plus_3}
    Let $(a,1^r,b)$ be an elongated bistar with $a\geq b\geq 2$. Then for all $r\geq 0$ even, we have 
    \[
    [q^{\floor{r/2}+3}t^3]\ps_3(Z_{(a,1^{r},b)})=\left({\floor{{r}/{2}}}+2\right)(\delta_{b=2}+\delta_{a=2})+r+2,
    \]
    and if $r$ is odd we have 
\[[q^{\floor{r/2}+3}t^3]\ps_3(Z_{(a,1^{r},b)})=\lfloor r/2\rfloor +2.\]
\end{lemma}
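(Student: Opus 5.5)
The plan is to count directly the proper colorings $\kappa:V\to\{\mb{0},\mb{1},\mb{2}\}$ that use all three colors and have weight $k+3$, where $k:=\lfloor r/2\rfloor$. By \Cref{lem:EB-t3-mindeg}, $k+3$ is the minimum weight of such a coloring, so every coloring we count is tight at every step of the lower-bound argument in that proof. I will organize the count by endpoint colorings $\langle x,y\rangle$, the colors of $(v_A,v_B)$.

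For fixed $\langle x,y\rangle$ the graph splits into three independent pieces, which I will minimize separately.
\begin{itemize}
\item \emph{Leaves of $A$.} If $x\neq\mb{0}$, all leaves of $A$ are colored $\mb{0}$, at cost $0$. If $x=\mb{0}$, they cost at least $a-1$.
\item \emph{Leaves of $B$.} The same holds with $y$ and $b-1$.
\item \emph{Inner spine.} This is the path $s_1\cdots s_r$, with $s_1$ forbidden to take color $x$ and $s_r$ forbidden to take color $y$.
\end{itemize}
Let $w(x,y)$ be the minimum weight of the inner spine under these constraints. Since the budget exceeds the absolute minimum $k$ by only $3$, the pair $\langle x,y\rangle$ contributes only when
\[x+y+(\text{leaf cost})+w(x,y)\le k+3,\]
and it contributes exactly the number of ways to hit $k+3$ with color $\mb{2}$ present. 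The pairs with $x=\mb{0}$ or $y=\mb{0}$ survive only when the leaf cost is $1$. That forces $a=2$ (respectively $b=2$), with the single leaf colored $\mb{1}$; colouring it $\mb{2}$ costs too much. These pairs are the source of the $\delta_{a=2}$ and $\delta_{b=2}$ terms.

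The key sub-count is the following: the number of colorings of a path on $r$ vertices in $\{\mb{0},\mb{1},\mb{2}\}$ with prescribed forbidden end colors, of weight $k$ or $k+1$, and with or without a $\mb{2}$.
\begin{itemize}
\item A weight-$k$ coloring of an even path must be a perfect $\mb{0}/\mb{1}$ alternation, so there are at most $2$ of them, and the boundary conditions pick out which.
\item A weight-$(k+1)$ coloring of an even path is one of two kinds. Either it is an alternation with one $\mb{1}$ upgraded to $\mb{2}$, giving $k$ choices per alternation. Or it is a $\mb{0}/\mb{1}$ coloring with $k+1$ ones and no two adjacent. The latter is determined by the position of the unique adjacent pair of zeros, or by ones at both ends, giving about $r$ choices before the boundary constraints are imposed.
\item For odd $r$ there is a unique weight-$k$ coloring, $\mb{0}\mb{1}\cdots\mb{1}\mb{0}$. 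A weight-$(k+1)$ coloring is again an upgrade or a shift. Here, however, the boundary constraints kill most options.
\end{itemize}

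With these counts in hand, I will tabulate the contributing endpoint pairs. For even $r$ these are $\langle\mb{2},\mb{1}\rangle$, $\langle\mb{1},\mb{2}\rangle$, $\langle\mb{1},\mb{1}\rangle$ (where the spine must supply the single $\mb{2}$), $\langle\mb{2},\mb{0}\rangle$, $\langle\mb{1},\mb{0}\rangle$ when $b=2$, and symmetrically the pairs with $v_A=\mb{0}$ when $a=2$. I expect the pairs not involving a zero endpoint to total $r+2$. I expect each zero-endpoint family to total $k+2$, coming from the $k$ upgrade positions plus two boundary variants. For odd $r$, one endpoint of the alternation is adjacent to a spine vertex colored $\mb{0}$, and I expect only $\langle\mb{2},\mb{1}\rangle$-type configurations together with upgrades to survive, giving $k+2$ and no dependence on $a$ or $b$.

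The main obstacle is the careful bookkeeping of the weight-$(k+1)$ path colorings under each pair of boundary constraints, since off-by-one errors at the ends of the path decide the constants $r+2$ and $k+2$. To control this, I will first verify the small cases $r=0,1,2$ by hand and then run the general count, treating the path ends as the only special positions.
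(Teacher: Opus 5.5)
\textbf{There is a genuine gap in your key sub-count.} Your framework matches the paper's: fix the endpoint colors $\langle x,y\rangle$, then minimize the leaves and the inner spine separately. Your list of contributing pairs for even $r$ is also correct. The problem is your classification of weight-$(k+1)$ colorings of the even inner path.

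A proper coloring of a path using only $\mb{0},\mb{1}$ must alternate. So on the inner path with $r=2k$ vertices there is no such coloring with $k+1$ ones, since the largest independent set has $k$ vertices. There is also none with two adjacent zeros. Your second kind of weight-$(k+1)$ coloring is therefore empty.

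In fact every weight-$(k+1)$ coloring contains exactly one $\mb{2}$. The kind you are missing has this $\mb{2}$ next to a $\mb{1}$, with the alternation switching phase across that pair; for $r=4$ these are $\mb{0120}$ and $\mb{0210}$. There are $2(k-1)$ of them, and they are exactly what $\langle\mb{1},\mb{1}\rangle$ needs. Under $s_1\neq\mb{1}$ and $s_r\neq\mb{1}$, only two upgraded alternations survive, namely $\mb{0101\cdots02}$ and $\mb{2010\cdots10}$. So your classification gives $2$ rather than $r$ for this pair, and the non-zero-endpoint total comes out $4$ rather than $r+2$. Hand-checking $r\le 2$ will not reveal this, because the phase-switch colorings first appear at $r=4$.

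The paper avoids classifying spine colorings altogether. In $\langle\mb{1},\mb{1}\rangle$ it chooses the position of the single $\mb{2}$ on the inner spine. Each side is then forced to be a minimal alternation, which gives exactly $r$.

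\textbf{A second error is in the zero-endpoint pairs.} Your claim that the single leaf must be $\mb{1}$, because coloring it $\mb{2}$ costs too much, is false for even $r$. Take $b=2$ and the pair $\langle\mb{1},\mb{0}\rangle$, with inner spine $\mb{0101\cdots01}$, all of $A$ colored $\mb{0}$, and the leaf of $B$ colored $\mb{2}$. This is proper, has weight $1+k+2=k+3$, and uses all three colors.

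The correct tally is $k+1$ from $\langle\mb{1},\mb{0}\rangle$: this coloring plus the $k$ upgrades with the leaf colored $\mb{1}$. Then $\langle\mb{2},\mb{0}\rangle$ contributes $1$. Your rule discards the first of these, so your ``two boundary variants'' have no legitimate source.

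Your targets $r+2$ and $k+2$ agree with the lemma, but as written they are asserted rather than produced by the proposed count. The odd case is fine: there the weight-$(k+1)$ inner colorings really are the $k$ upgrades plus the single $\mb{1010\cdots01}$.
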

\begin{proof}
First we handle the case with $r$ even. We proceed by examining the contribution from each possible endpoint coloring, of which there are 9.
  \begin{itemize}
        \item $\langle \mathbf{0},\mathbf{0}\rangle$. Note that this endpoint coloring is only possible if $r>0$. We need to color all of the vertices from $A$ and $B$ with at least a $\mb{1}$, and a valid $r$-spine coloring has minimum weight $\lfloor r/2\rfloor+2$ for $r>0$.  
        But then since $\ab{A}\geq\ab{B}\geq 1$, any coloring will have total weight at least $r/2 + 4$, so this set of colorings contributes nothing to the coefficient.  
        \item $\langle \mathbf{1},\mathbf{0}\rangle$. An $r$-spine coloring has minimum weight $\lfloor r/2\rfloor$ by using only $\mb{0}$'s and $\mb{1}$'s, and all the leaves in $B$ must be colored at least $\mb{1}$. However if $\ab{B}>1$, then any proper coloring which includes a $\mb{2}$ would have weight at least $r/2+4$, so we only have a contribution if $b=2$. Supposing $b=2$, if the $r$-spine is assigned the minimum weight coloring then the leaf in $B$ must be colored $\mb{2}$ since if it were colored $\mb{1}$ it would be impossible to assign $\mb{2}$ to a leaf in $A$ without exceeding the total weight. If $r>0$,  we can also do an $r$-spine coloring of weight $\lfloor r/2\rfloor+1$ in one of $\lfloor r/2\rfloor$ ways (substituting a $\mb{2}$ for a $\mb{1}$ on the $r$-spine), and then color the leaf in $B$ as $\mb{1}$. Hence, these endpoint colorings contribute $(\lfloor r/2\rfloor +1)\delta_{b=2}$.
        \item $\langle \mathbf{2},\mathbf{0}\rangle$. A minimal $r$-spine coloring has weight $\lfloor r/2\rfloor$, so we can only produce a coloring of desired weight if $|B|=1$ in which case that leaf is assigned $\mb{1}$. Thus, the contribution is $\delta_{b=2}$.
        \item $\langle \mathbf{0},\mathbf{1}\rangle$. Symmetrizing the argument given for the case $\lan \mb{1},\mb{0}\ran$, we get a contribution of $(\lfloor r/2\rfloor+1)\delta_{a=2}$.
        \item $\langle \mathbf{0},\mathbf{2}\rangle$. Symmetrizing the argument given for the case $\lan \mb{2},\mb{0}\ran$, we get a contribution of $\delta_{a=2}$.
        \item $\langle \mathbf{1},\mathbf{1}\rangle$. Any $r$-spine coloring must use a $\mb{2}$. Choose where on the $r$-spine to put a $\mb{2}$. The rest of the coloring is uniquely determined as the pattern of alternating $\mb{0}$'s and $\mb{1}$'s, starting with $\mb{0}$. Thus, this contributes $r$.
        \item $\langle \mathbf{2},\mathbf{2}\rangle$. {Any $r$-spine coloring has weight at least $\lfloor r/2\rfloor$, so the total weight is already too large and} this contributes $0$.
        \item $\langle \mathbf{1},\mathbf{2}\rangle$. {The unique coloring which alternates $\mb{0}$'s and $\mb{1}$'s from left to right along the $r$-spine (that is, with the spine vertex adjacent to $v_A$ colored  $\mb{0}$) and colors all leaves $\mb{0}$ is the only 3-coloring with the appropriate weight, so} this contributes $1$.
        \item $\langle \mathbf{2},\mathbf{1}\rangle$. By symmetry with the previous case, this also contributes $1$.
    \end{itemize}

   Now consider the case where $r$ is odd. The arguments are identical to the above with the following exceptions.

   \begin{itemize}
    \item $\langle \mathbf{0},\mathbf{0}\rangle$. A valid $r$-spine coloring has minimum weight $\lfloor r/2\rfloor+1$. Since vertices in $A$ and $B$ need to be colored at least 1, and we need to use at least one 2, the total minimum weight is $(\lfloor r/2\rfloor+1) + 3$. Hence the contribution is 0.
       \item $\langle \mathbf{1},\mathbf{0}\rangle$. Now an $r$-spine coloring has minimum weight $\lfloor r/2\rfloor+2$ since a $\mb{2}$ must be used. This gives weight $\floor{r/2}+3$ before coloring the leaves, at least one of which must be non-$\mb{0}$, so there is no contribution.
       \item $\langle \mathbf{2},\mathbf{0}\rangle$. Now an $r$-spine coloring has minimum weight $\lfloor r/2\rfloor+1$ by alternating $\mb{0}$ and $\mb{1}$. But this gives weight $\floor{r/2}+3$ before coloring the leaves, at least one of which must be non-$\mb{0}$, so there is no contribution.
       \item $\langle \mathbf{0},\mathbf{1}\rangle$. Also no contribution, by symmetry with $\langle \mathbf{1},\mathbf{0}\rangle$.
       \item  $\langle \mathbf{0},\mathbf{2}\rangle$. Also no contribution, by symmetry with $\langle \mathbf{2},\mathbf{0}\rangle$.
     \item $\langle \mathbf{1},\mathbf{1}\rangle$. {The minimum $r$-spine coloring has weight $\floor{r/2}$ but uses no $\mb{2}$, thus yields weight $\floor{r/2}+2$ before coloring the leaves or placing a $\mb{2}$. Since we cannot use a $\mb{2}$ here without going overweight, we must use a $\mb{2}$ on the $r$-spine. However a $\mb{2}$ can only replace a $\mb{1}$ in the minimal $r$-spine coloring, so there are $\floor{r/2}$ choices. Such a coloring already has total weight $\lfloor r/2\rfloor+3$,} so this contributes $\floor{r/2}$.
   \end{itemize}
   Adding the contributions in each case gives the claimed result.
\end{proof}

\begin{lemma}\label{lem:elongated-bistar-c1-odd}
    Let $a \geq b \geq 2$ and let $r \geq 1$ be odd. Then, the coefficient $C_1 := [q^{\lfloor\frac{r}{2}\rfloor + 4}t^3] \ps_3(Z_{(a, 1^r, b)}(\mathbf{x}; t))$ is \[C_1 = 2(\bar{a}+\bar{b}+\lfloor r/2\rfloor)+\binom{\lfloor r/2+1\rfloor}{2}+1+\delta_{r\geq 2}+\delta_{a=b=2}(\lfloor r/2\rfloor+3+r+1)+\delta_{b=2}(r+1).\]

    As a result, if 
    \begin{equation}\label{eqn:deciding-b=2}
        C_1 = 2(\bar{a} + \bar{b}  + \lfloor r / 2\rfloor) + \binom{\lfloor r / 2 + 1\rfloor}{2} + \delta_{r \geq 2} + r + 2 + \delta_{a=2}(\lfloor r/2\rfloor+3+r+1)
    \end{equation}
     then $b = 2$. Otherwise, $b > 2.$
\end{lemma}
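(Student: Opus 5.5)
I will follow the method of \Cref{lem:r_by_2_plus_3}. Write $k=\lfloor r/2\rfloor$; since $r$ is odd we have $r=2k+1$, and the target weight is $k+4$. I split the proper $3$-colorings that use all three colors according to the endpoint coloring $\langle \kappa(v_A),\kappa(v_B)\rangle$, which has seven admissible cases. In each case I count the colorings of total weight exactly $k+4$. Because $r$ is odd, alternating $\mb{0},\mb{1}$ along the $r$-spine starting next to $v_A$ leaves the two end spine vertices colored alike. So I first record, as a function of the end colors, the minimal weight of a proper coloring of the $r$-spine and the number of ways to exceed that minimum by one or two units. Exceeding by one means replacing a single $\mb{1}$ by a $\mb{2}$, or shifting the alternation by one position. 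Exceeding by two means two such replacements, which gives a $\binom{\cdot}{2}$ count, or a $\mb{1}\to\mb{2}$ replacement combined with a shift. The leaves add their own weight: every leaf of $A$ (resp.\ $B$) avoids $\kappa(v_A)$ (resp.\ $\kappa(v_B)$). A leaf can be raised by one unit in $\bar a$ or $\bar b$ ways, and this is the source of the $2(\bar a+\bar b)$ terms.

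The expected per-case contributions are as follows.

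\emph{Mixed endpoints $\langle\mb{1},\mb{2}\rangle$ and $\langle\mb{2},\mb{1}\rangle$.} The base weight is $3+k$ or $3+k+1$, depending on which end the $\mb{1}$ sits. One extra unit can be distributed over the leaves ($\bar a$ or $\bar b$ choices) or over the spine. These cases should account for most of the terms $2(\bar a+\bar b+k)+1+\delta_{r\ge 2}$; in particular, $\delta_{r\ge2}$ appears because a shifted alternation needs an interior spine vertex.

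\emph{Endpoints $\langle\mb{1},\mb{1}\rangle$.} The base is $k+2$, and a $\mb{2}$ must appear with two units of slack. The options are: two spine replacements, or one spine replacement plus a spine shift, which should produce $\binom{k+1}{2}=\binom{\lfloor r/2+1\rfloor}{2}$; one spine replacement together with raising one leaf from $\mb{0}$ to $\mb{2}$; or a leaf colored $\mb{2}$ directly.

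\emph{Endpoints containing a $\mb{0}$.} Here every leaf at a $\mb{0}$ endpoint must be nonzero, so slack only survives when that endpoint carries a single leaf. This produces the $\delta_{b=2}(r+1)$ term and, when $a=b=2$ as well, the extra $\delta_{a=b=2}(k+3+r+1)$ term, including the $\langle\mb{0},\mb{0}\rangle$ case, which now contributes.

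Summing the cases should give the formula for $C_1$. The main obstacle is bookkeeping: at slack $2$ the spine counts must be exact, with no double counting between "two replacements" and "shift plus replacement", and the small cases $r=1$ (that is, $k=0$) must be handled correctly. I would check the final sum against the direct computations for $r=1,3$ with small $a,b$.

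For the "as a result" part, I evaluate the formula in the two cases. If $b=2$, then $\delta_{b=2}(r+1)$ turns the constant $1$ into $r+2$, and $\delta_{a=b=2}=\delta_{a=2}$; this is exactly \eqref{eqn:deciding-b=2}. If $b>2$, then $C_1$ equals the right side of \eqref{eqn:deciding-b=2} minus $r+1$, since the $\delta_{a=2}$ term also vanishes because $a\ge b>2$. Because $r+1>0$ and $\bar a,\bar b,r$ are all known, equality in \eqref{eqn:deciding-b=2} holds if and only if $b=2$. Here I use that $\bar a+\bar b=N-r-2$ is known from $N$ and from $r$ (\Cref{lem:recover-r}), so the right side of \eqref{eqn:deciding-b=2} is computable. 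One subtlety: when $a=2$ it forces $b=2$, and the condition is still consistent in that case.
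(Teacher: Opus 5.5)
Your framework (split by $\langle\kappa(v_A),\kappa(v_B)\rangle$ and count spine colorings by their excess over the minimum) is the same as the paper's. Your deduction of the $b=2$ test from the formula is also correct. The gap is that the formula for $C_1$ is never derived. You only predict the per-case contributions, and several predictions are wrong, so carrying out the plan as written would not give the stated sum.

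First, since $r\ge 1$, $v_A$ and $v_B$ are nonadjacent, so all nine endpoint pairs occur, not seven. You never treat $\langle\mb{2},\mb{2}\rangle$. There the spine and leaves must have total weight $k$. This forces the minimal alternation $\mb{0},\mb{1},\dots,\mb{0}$ with all leaves $\mb{0}$, which uses $\mb{1}$ iff $k\ge1$. This case is the sole source of $\delta_{r\ge2}$.

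Second, in $\langle\mb{1},\mb{2}\rangle$ and $\langle\mb{2},\mb{1}\rangle$ the base weight is $k+3$ in both, because the alternation has $\mb{0}$ at both ends of the $r$-spine. The shifted alternation $\mb{1},\mb{0},\dots,\mb{1}$ is forbidden in both, since it puts a $\mb{1}$ next to the endpoint colored $\mb{1}$. So these cases give exactly $\bar b+k$ and $\bar a+k$, and nothing toward $1+\delta_{r\ge2}$.

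Third, in $\langle\mb{1},\mb{1}\rangle$ the leaves can only be $\mb{0}$ or $\mb{2}$. Hence ``one spine replacement plus a leaf raised to $\mb{2}$'' costs three units and does not occur. The correct count has two parts:
\begin{itemize}
\item $\bar a+\bar b$ colorings with the minimal spine and one leaf colored $\mb{2}$;
\item the spine colorings of weight $k+2$ that contain a $\mb{2}$ and have $s_1,s_r\ne\mb{1}$. Either there are two $\mb{2}$'s at even positions ($\binom{k}{2}$ ways), or exactly one $\mb{2}$, which must sit at one of the $k+1$ odd positions and then forces the rest.
\end{itemize}
The spine part totals $\binom{k+1}{2}+1$, not $\binom{k+1}{2}$. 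The $+1$ in the formula and the second copy of $\bar a+\bar b$ both come from this case, not from the mixed cases.

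Your attribution of the cases with a $\mb{0}$ endpoint is right, but they still have to be counted:
\begin{itemize}
\item $\langle\mb{1},\mb{0}\rangle$ gives $\delta_{b=2}\,r$. These are the $r=2k+1$ spine colorings of weight $k+2$ with $s_1\ne\mb{1}$, $s_r\ne\mb{0}$. Note that a block of two adjacent nonzero spine vertices can be colored $\mb{1}\mb{2}$ or $\mb{2}\mb{1}$.
\item $\langle\mb{2},\mb{0}\rangle$ gives $\delta_{b=2}$.
\item The mirror cases $\langle\mb{0},\mb{1}\rangle$ and $\langle\mb{0},\mb{2}\rangle$ together give $\delta_{a=b=2}(r+1)$.
\item $\langle\mb{0},\mb{0}\rangle$ gives $\delta_{a=b=2}(k+3)$.
\end{itemize}
Only with these corrected counts does the sum equal the claimed expression. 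Your check against $r=1,3$ would not rescue the accounting as you set it up.
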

\begin{proof} 
  We investigate by considering the possible colorings of the endpoints $v_A$ and $v_B$. 
    \begin{itemize}
        \item $\langle \mathbf{0},\mathbf{0}\rangle$. Each of the $\overline{a} + \overline{b}$ vertices in $A \cup B$
        as well as $\lfloor r/2\rfloor + 1$ of the $r$-spine vertices must be colored non-$\mathbf{0}$, for a total of at least $\lfloor r/2\rfloor + a + b  -1 \geq \lfloor r/2\rfloor + 3$. Since at least one of these vertices must be colored $\bf 2$, the chromatic sum can only be $\lfloor r/2\rfloor + 4$ if $a = b = 2$. Further, in this case, we can pick any of the $\lfloor r/2\rfloor + 3$ non-$\bf 0$ vertices to be colored with a $\bf 2.$ Hence, in total,  this endpoint coloring contributes $\delta_{a=b=2}(\lfloor r/2\rfloor+3)$.
        \item $\langle \mathbf{1},\mathbf{0}\rangle$. In addition to $v_A$, vertices in $B$ must be colored non-$\bf 0$, as must $\lfloor r / 2\rfloor + 1$ of the $r$-spine vertices, for a total of at least $\lfloor r / 2\rfloor + b + 1 \geq \lfloor r / 2\rfloor + 3$ non-$\bf 0$ vertices. Since at least one of these must be colored $\bf2$, the coloring only contributes to the coefficient if $b = 2$. So, assume $b = 2$. Observe that there needs to be two adjacent non-$\bf 0$ vertices on the spine, so the single $\bf 2$ must be on the spine. It is straightforward to check there are $r$ ways to color the spine appropriately. In total, this endpoint coloring contributes $\delta_{b = 2}r.$ 
        \item $\langle \mathbf{2},\mathbf{0}\rangle$. Since the vertices in $B$ and $\lfloor r/ 2\rfloor + 1$ of the $r$-spine vertices must be non-$\bf 0$, the chromatic sum of any contributing coloring is at least $2 + \lfloor r / 2\rfloor + b.$ So, the only way to obtain the correct chromatic sum is if $b = 2$, where we color the single vertex in $B$ with $\bf1$ and the $r$-spine vertices $\bf 1, \bf 0, 1, \ldots, 0, 1$. Thus, the total contribution is $\delta_{b=2}.$
        \item $\langle \mathbf{0},\mathbf{1}\rangle$. Similar to the cases above, it is straightforward to check that the only contributing proper colorings must have $a = 2$ (in which case $b = 2$ too), with the single vertex in $A$ colored $\bf 1$ and the $r$-spine vertices colored with one $\bf 2$ and with $\lfloor r/2\rfloor$ $\bf 1'$s. There are $r$ ways to to distribute these colors along the inner spine vertices. Hence, the total contribution is $\delta_{a=b=2}r$.
        \item $\langle \mathbf{0},\mathbf{2}\rangle$. There must be at least $\overline{a} + \lfloor r / 2 \rfloor + 1$ vertices colored non-$\bf 0$ in addition to $v_B$. Hence, the only possible contributing colorings arise when $a = 2$ (in which case $b = 2$ too), and we color the single vertex in $A$ with $\bf 1$ and the $r$-spine vertices $\bf 1, 0, 1, 0, \ldots, 1$. 
        The total contribution is thus $\delta_{a=b=2}$.
        \item $\langle \mathbf{1},\mathbf{1}\rangle$. At least $\lfloor r / 2\rfloor$ of the $r$-spine vertices must be colored non-$\bf 0$, and hence at most two of them can be colored with a $\bf 2$. We divide into subcases based on whether zero, one, or two of the $r$-spine vertices are colored $\bf 2.$ If \textit{zero} are colored $\bf 2$, we have one way to color the $r$-spine vertices, and $\overline{a} + \overline{b}$ ways to place a $\bf2$ in $A \cup B$. If exactly  \textit{one} is colored $\bf 2$, see carefully that it must be at one of the ``odd'' $r$-spine vertices, of which we have $\lfloor r / 2\rfloor + 1$; the locations of the $\bf 1$'s are then uniquely determined. If \textit{two} are colored $\bf 2$, they must occur at ``even'' $r$-spine vertices, and so there are $\binom{\lfloor r / 2\rfloor}{2}$ ways to place them, and the locations of the remaining $\bf 1$'s are then again uniquely determined. Hence, in total, the contribution is $\bar{a}+\bar{b}+\lfloor r/2  \rfloor + 1 + \binom{\lfloor r/2\rfloor}{2} = \bar{a}+\bar{b}+\binom{\lfloor r/2+1\rfloor}{2}+1.$
        \item $\langle \mathbf{2},\mathbf{2}\rangle$ Since the minimum $r$-spine coloring can only contribute $\lfloor r/2\rfloor $, we can have at most one coloring. However if $r=1$, then this coloring would not have any $\bf{1}$'s in it, so it is invalid. Hence, the contribution is $\delta_{r\geq 2}.$
        \item $\langle \mathbf{1},\mathbf{2}\rangle$. The minimum $r$-spine coloring is weight $\lfloor r/2\rfloor$. Hence, accounting for the endpoints, we currently have weight $\lfloor r/2\rfloor+3$, so we can only use one more $\bf1$ which must go in $B$. Or, if we don't require the minimum $r$-spine coloring, we can swap one of the $\bf1$'s on the $r$-spine to a $\bf 2$ (for which there are $\lfloor r/2\rfloor$ choices) and color all the vertices in $B$ with $\bf 0$. Hence, we have a total contribution of $\bar{b}+\lfloor r/2\rfloor$.
        \item $\langle \mathbf{2},\mathbf{1}\rangle$. By a symmetric argument, this coloring contributes $\bar{a}+\lfloor r/2\rfloor$.
    \end{itemize}
    Summing the contributions together proves the claim.
    \end{proof}

The following lemma will be used to recover $a$ and $b$ in the case where $r$ is known to be odd. 

    \begin{lemma} \label{lem:recovering-a-b-r-odd}
    For $a\geq b>2$ and $r\geq 1$ odd, there is a function $F:\Z^2\to \Z$ such that
    \[[q^{\lfloor r/2\rfloor + 5}t^3]=\binom{\bar{a}}{2}+\binom{\bar{b}}{2}+F(\bar{a}+\bar{b},r)+(r+1)(\delta_{b=3}+\delta_{a=3}).\]
    Furthermore, if $r$ and $a+b$ are known, $a$ and $b$ are recoverable in this situation.
\end{lemma}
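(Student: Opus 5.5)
Write $k=\lfloor r/2\rfloor$, so $r=2k+1$ and we need colorings of weight $k+5$ with exactly three colors. As in Lemmas~\ref{lem:r_by_2_plus_3} and \ref{lem:elongated-bistar-c1-odd}, I will sum over the nine endpoint colorings $\langle \mb{i},\mb{j}\rangle$ of $(v_A,v_B)$. In each case the weight splits as: the endpoints, the $r$-spine, and the leaves in $A$ and $B$. Since $r$ is odd, an $r$-spine coloring has a fixed minimum weight, namely $k$ or $k+1$ depending on which of the endpoint colors are $\mb{0}$. This leaves a budget of only one or two extra units.

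The key observation is that $\binom{\bar a}{2}$ and $\binom{\bar b}{2}$ can only come from placing two $\mb{1}$'s among the leaves of a single side. This happens in the endpoint cases $\langle\mb{2},\mb{1}\rangle$, $\langle\mb{1},\mb{2}\rangle$ and $\langle\mb{1},\mb{1}\rangle$, where leaves adjacent to a $\mb{2}$ may take color $\mb{1}$, or leaves adjacent to a $\mb{1}$ may take color $\mb{2}$. I will organize the count so that every term except these depends only on $\bar a+\bar b$ and $r$. Terms of the form $\bar a\bar b$ cancel into this shape, since $\bar a\bar b=\binom{\bar a+\bar b}{2}-\binom{\bar a}{2}-\binom{\bar b}{2}$. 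Terms linear in $\bar a$ and $\bar b$ typically appear symmetrically as $\bar a+\bar b$, and all of these are absorbed into $F$.

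Because $b>2$, the cases $\langle\mb{0},\cdot\rangle$ and $\langle\cdot,\mb{0}\rangle$ are strongly restricted. With $v_B$ colored $\mb{0}$, the $\bar b\ge 2$ leaves of $B$, together with $k+1$ spine vertices and at least one $\mb{2}$, already cost about $k+4$. So these cases contribute only when the slack permits, and I expect the correction $(r+1)(\delta_{b=3}+\delta_{a=3})$ to arise here. For example, with $\langle\mb{1},\mb{0}\rangle$ and $b=3$, both leaves of $B$ are colored $\mb{1}$ and the single $\mb{2}$ goes on the spine, which gives the counts $r$ and $1$ exactly as in the proof of Lemma~\ref{lem:elongated-bistar-c1-odd}. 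The main obstacle is the bookkeeping in $\langle\mb{1},\mb{1}\rangle$, $\langle\mb{1},\mb{2}\rangle$ and $\langle\mb{2},\mb{1}\rangle$ with two extra units to distribute. Here I must enumerate the spine patterns carrying zero, one or two $\mb{2}$'s, using the odd/even position analysis from Lemma~\ref{lem:elongated-bistar-c1-odd}, and combine them with leaf recolorings. Throughout I will only need to verify that each leaf-dependent count is symmetric in $\bar a,\bar b$ up to the binomial terms, rather than evaluating $F$ explicitly.

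For recovery, suppose $r$ and $z=a+b$ are known. Then $m:=[q^{k+5}t^3]-F(z-2,r)$ is known, and
\[
m=\binom{\bar a}{2}+\binom{\bar b}{2}+(r+1)(\delta_{b=3}+\delta_{a=3}).
\]
If $b=3<a$, then $m=\binom{z-4}{2}+1+r+1>\binom{z-4}{2}+1$. If $b\ge4$, then by Lemma~\ref{lem:antiunimodal binomials}, $m=\binom{\bar b}{2}+\binom{z-2-\bar b}{2}\le\binom{3}{2}+\binom{z-5}{2}$, which is strictly less than $\binom{z-4}{2}+1$ once $z\ge 8$. The small cases, such as $a=b=3$ with $z=6$, are decided directly, as in Algorithm~\ref{alg:elongated-bistars}. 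So the case $b=3$ is detected. Otherwise, the equation $m=\binom{\bar b}{2}+\binom{z-2-\bar b}{2}$ is quadratic in $\bar b$, and by Lemma~\ref{lem:antiunimodal binomials} it has a unique solution with $\bar b\le (z-2)/2$. This recovers $b$, and then $a=z-b$.
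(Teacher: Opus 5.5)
Your recovery half is correct and is essentially the paper's argument. You subtract $F(\bar a+\bar b,r)$, compare the result with the threshold $\binom{a+b-4}{2}+1$ via anti-unimodality to detect $b=3$, and otherwise solve the quadratic in $\bar b$.

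The gap is in the first half, which is the real content of the lemma. You never determine the coefficient of $\binom{\bar a}{2}+\binom{\bar b}{2}$. Your claim that $\bar a\bar b$ terms ``cancel into this shape'' is false: since $\bar a\bar b=\binom{\bar a+\bar b}{2}-\binom{\bar a}{2}-\binom{\bar b}{2}$, any such term would move that coefficient away from $1$. That would break both the stated formula and the identity $m=\binom{\bar a}{2}+\binom{\bar b}{2}+(r+1)(\delta_{b=3}+\delta_{a=3})$ on which your recovery rests. The other structural claims are also not established:
\begin{itemize}
\item ``Each leaf-dependent count is symmetric in $\bar a,\bar b$ up to the binomial terms'' fails case by case. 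The paper's count for $\langle\mb{1},\mb{2}\rangle$ is $\bar a+\binom{\bar b}{2}+k\bar b+3k+\binom{k}{2}$, which is symmetric only after pairing with $\langle\mb{2},\mb{1}\rangle$.
\item $\langle\mb{1},\mb{1}\rangle$, which you list as a source of binomial terms, cannot produce one.
\item The symmetry of the linear terms is asserted only ``typically'', and the $\delta$-correction is only ``expected''.
\item $\langle\mb{0},\mb{0}\rangle$ and $\langle\mb{2},\mb{2}\rangle$ are never examined.
\end{itemize}

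The paper closes this by computing all nine endpoint contributions explicitly. That also produces the closed form of $F$ in \eqref{eqn:F-def}, which Algorithm~\ref{alg:elongated-bistars} needs in order to evaluate $F(z-2,r)$.

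If you want to avoid that bookkeeping, the missing idea is a leaf-budget argument. After the endpoints and the minimal spine weight $k$, the leaf budget is as follows.
\begin{itemize}
\item In $\langle\mb{1},\mb{1}\rangle$ the budget is $3$ with leaf colors in $\{\mb{0},\mb{2}\}$. In $\langle\mb{2},\mb{2}\rangle$ it is $1$ with leaf colors in $\{\mb{0},\mb{1}\}$. So at most one leaf is nonzero, and these cases contribute $u(r)+v(r)(\bar a+\bar b)$.
\item In $\langle\mb{1},\mb{2}\rangle$ the budget is $2$, with $A$-leaves in $\{\mb{0},\mb{2}\}$ and $B$-leaves in $\{\mb{0},\mb{1}\}$. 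The only options are: no nonzero leaf, one $B$-leaf colored $\mb{1}$, two $B$-leaves colored $\mb{1}$, or one $A$-leaf colored $\mb{2}$. The last two force the unique minimal spine. This gives $u'(r)+w(r)\bar b+\binom{\bar b}{2}+\bar a$.
\item $\langle\mb{2},\mb{1}\rangle$ is the mirror image of $\langle\mb{1},\mb{2}\rangle$ under $a\leftrightarrow b$.
\end{itemize}
Hence no coloring has nonzero leaves on both sides. The mirror pair sums to exactly $\binom{\bar a}{2}+\binom{\bar b}{2}$ plus a function of $\bar a+\bar b$ and $r$.

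You must also show two things about the cases with a $\mb{0}$ endpoint. First, $\langle\mb{0},\mb{0}\rangle$ contributes $0$: at $a=b=3$ its only weight-$(k+5)$ coloring uses two colors. Second, for $b>2$, $\langle\mb{1},\mb{0}\rangle$ and $\langle\mb{2},\mb{0}\rangle$ give exactly $r\delta_{b=3}$ and $\delta_{b=3}$, with no other corrections, and their mirrors give the $a$-terms.

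With that, $F$ exists. You would still need $F$ explicitly, or at least computably, to run the recovery.
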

\begin{proof}
We argue again by considering each possible endpoint coloring.
  
    \begin{itemize}
        \item $\langle \mathbf{0},\mathbf{0}\rangle$: We need to color all of the leaves in $A \cup B$ with positive numbers, giving a weight of at least 4. The minimum $r$-spine coloring, created by alternating between \textbf{1} and \textbf{0}, gives a weight of $\lfloor r/2\rfloor + 1$ since the endpoints are both $\textbf{0}$ and $r$ is odd. Thus, the total weight is at least $\lfloor r/2\rfloor + 5$. However, the unique coloring of weight $\lfloor r/2\rfloor + 5$ is only a 2-coloring, and therefore the contribution is $0$.
        
        \item $\langle \mathbf{1},\mathbf{0}\rangle$: We need to color all of the leaves in $B$ with positive numbers contributing a weight of at least $\bar{b}$. The $r$-spine needs a coloring of weight at least $\lfloor r/2\rfloor+2$ since the endpoints are $\mb{1}$ and $\mb{0}$ and $r$ is odd. Note that this $r$-spine coloring must use a $\mb{2}$ somewhere. Then for $b>2$, we only have a valid coloring if $b=3$ in which case both leaves in $B$ receive color $\mb{1}$, and the coloring is determined by which spine vertex is assigned $\mb{2}$. The remaining spine vertices' colors are then determined in an alternating pattern of $\mb{0}$ and $\mb{1}$. Thus, this contributes $r\delta_{b=3}$.        
        \item $\langle \mathbf{2},\mathbf{0}\rangle$: A minimal weight $r$-spine coloring has weight $\lfloor r/2\rfloor +1$ and is unique. So we get a contribution only when $b=3$, yielding $\delta_{b=3}.$
        \item $\langle \mathbf{0},\mathbf{1}\rangle$: By a symmetric argument with the $\lan \mb{1},\mb{0}\ran$ case, we have $r\delta_{a=3}$.
        \item $\langle \mathbf{0},\mathbf{2}\rangle$: By a symmetric argument with the $\lan\mb{2},\mb{0}\ran$ case, we have $\delta_{a=3}$.
        \item $\langle \mathbf{1},\mathbf{1}\rangle$: We count by the weight of $r$-spine colorings. 
        
        A minimal $r$-spine coloring has weight $\floor{r/2}$ and is unique. With this coloring, we would need to choose one leaf in $A\cup B$ to color $\mb{2}$ and another leaf to color $\mb{1}$. But this is impossible since $v_A$ and $v_B$ are colored $\mb{1}$, so there is no contribution.

        There are $\floor{r/2}$ valid $r$-spine colorings with weight $\floor{r/2}+1$, each obtained by replacing a $\mb{1}$ by a $\mb{2}$ in the minimal weight coloring. Once this spine coloring is fixed, we choose a leaf in $A\cup B$ to color $\mb{2}$, so the total contribution here is $\floor{r/2}(\bar{a}+\bar{b})$.
        
        If we were to choose an $r$-spine coloring of weight $\floor{r/2}+2$, then there will be a remaining weight of 1 to put on the leaves. But this is impossible since $v_A$ and $v_B$ are colored $\mb{1}$, so no $r$-spine colorings contribute with this weight.

        Finally, we count $r$-spine colorings of weight $\floor{r/2}+3$. Such a coloring must use at least two $\mb{2}$'s. Note that any $r$-spine coloring must also have no more than three $\mb{2}$'s since any $r$-spine coloring must use at least $\floor{r/2}$ non-$\mb{0}$ colors, so more than three $\mb{2}$'s would give $r$-spine weight more than $\floor{r/2}+3$. Thus, we have two subcases here.
        
        If an $r$-spine coloring of weight $\floor{r/2}+3$ uses exactly two $\mb{2}$'s, then there must be $\floor{r/2}-1$ $\mb{1}$'s. Such a coloring is nearly determined by choosing any two non-adjacent $r$-spine vertices to color $\mb{2}$ and allowing the remaining vertices to alternate between $\mb{0}$ and $\mb{1}$. However, if there are an even number of spine vertices between the $\mb{2}$'s, there are two possible colorings of these in-between vertices. Since $r$ is odd, one may verify that there are $\floor{r/2}+2\binom{\floor{r/2}}{2}$ colorings with an odd number of spine vertices between those colored $\mb{2}$, and there are $2\binom{\floor{r/2}}{2}$ colorings with an even number of vertices in between. Thus we have a contribution of 
        \[\floor{r/2}+2\binom{\floor{r/2}}{2}+2(2\binom{\floor{r/2}}{2})=\floor{r/2}+6\binom{\floor{r/2}}{2}\]
        from these colorings.

        Now suppose the $r$-spine coloring uses three $\mb{2}$'s. Since we must use $\floor{r/2}-3$ $\mb{1}$'s, the coloring is determined by choosing three of the $\mb{1}$'s from the minimal weight coloring to change to $\mb{2}$'s. Thus the contribution here is $\binom{\floor{r/2}}{3}$.

        The total contribution from this endpoint coloring is then
        \[g_{11}(\bar{a}+\bar{b},r):=\floor{r/2}(\bar{a}+\bar{b})+\floor{r/2}+6\binom{\floor{r/2}}{2}+\binom{\floor{r/2}}{3}.
        \]

        \item $\langle \mathbf{2},\mathbf{2}\rangle$:  A minimal $r$-spine coloring again has weight $\floor{r/2}$ and is unique. Thus, with this spine coloring, we have a choice of leaf in $A$ or $B$ to color $\mb{1}$, contributing $\bar{a}+\bar{b}$.
        
        On the other hand, we may choose an $r$-spine coloring of weight $\floor{r/2}+1$. There is always one consisting of only $\mb{1}$'s and $\mb{0}$'s. When $r>3$, there are others obtained by replacing a $\mb{1}$ by a $\mb{2}$ in the $r$-spine coloring of weight $\floor{r/2}$. When $r=3$, such a coloring is invalid since it uses no $\mb{1}$. Thus the total contribution is \[g_{22}(\bar{a}+\bar{b},r):=\bar{a}+\bar{b}+1+\floor{r/2}-\delta_{r=3}.\]
        \item $\langle \mathbf{1},\mathbf{2}\rangle$: We count by $r$-spine weight again.
        
        A minimal $r$-spine coloring has weight $\lfloor r/2 \rfloor$ and is unique. We need an additional weight of 2 which is obtained by coloring leaves non-$\mb{0}$ in one of
        \[\binom{\bar{a}}{1}+\binom{\bar{b}}{2}\]
        ways.

        One may create a $r$-spine coloring of weight $\lfloor r/2\rfloor+1$ by using the unique minimal $r$-spine coloring and replacing a \textbf{1} with \textbf{2}, performed in one of $\lfloor r/2\rfloor$ ways. We then need an additional weight of 1, obtained in one of $\binom{\bar{b}}{1}$ ways.

        Finally, one may color the $r$-spine to give weight $\lfloor r/2\rfloor+2$. Such a coloring can use one or two $\mb{2}$'s on the $r$-spine. We consider these subcases separately.
        
        If the $r$-spine contains one $\mb{2}$, this $\mb{2}$ can be placed on the $r$-spine in any of $r-1$ positions (avoiding the vertex adjacent to $v_B$). The rest of the $r$-spine coloring is determined uniquely if there are an odd number of vertices between the $\mb{2}$ and $v_B$, and if there are an even number of vertices we have two possible colorings. As there are $\floor{r/2}$ vertices of the $r$-spine with this latter property, the contribution is 
        \[r-1+\floor{r/2}=3\floor{r/2}.\]

        If two $\mb{2}$'s appear, then there must be $\floor{r/2}-2$ $\mb{1}$'s on the $r$-spine. This means that such a coloring is obtained from the minimal weight coloring by choosing two $\mb{1}$'s to replace by $\mb{2}$'s. This contributes $\binom{\floor{r/2}}{2}$.
        
        Adding these subcases, we have a total contribution of
            \[\bar{a}+\binom{\bar{b}}{2}+\floor{r/2}\bar{b}+3\floor{r/2}+\binom{\floor{r/2}}{2}\]
            from this endpoint coloring.

        \item $\langle \mathbf{2},\mathbf{1}\rangle$: By symmetry with the $\lan \mb{1},\mb{2}\ran$ case, this contributes
        \[\bar{b}+\binom{\bar{a}}{2}+\floor{r/2}\bar{a}+3\floor{r/2}+\binom{\floor{r/2}}{2}.\]
       
    \end{itemize}

     Now set 
        \begin{align}\label{eqn:F-def}
F(s,r)&:=(\floor{r/2}+1)s+6\floor{r/2}+2\binom{\floor{r/2}}{2}+g_{11}(s,r)+g_{22}(s,r).\\
        &=2s + \lfloor r / 2 \rfloor \left(2s + 8\right) + 8 \binom{\lfloor r/ 2\rfloor}{2} + \binom{\lfloor r/ 2\rfloor}{3} +1 - \delta_{r = 3}.
        \end{align}
        Then, adding the contributions above we have that
        \[[q^{\lfloor r/2\rfloor + 5}t^3]=\binom{\bar{a}}{2}+\binom{\bar{b}}{2}+F(\bar{a}+\bar{b},r)+(r+1)(\delta_{b=3} +\delta_{a=3}),\]
        proving the first claim.
        
Next, given the values of $a+b$ and $r$, using $F$ one computes the quantity
\begin{equation}\label{eq:q_r_by_2_plus_5}
    [q^{\lfloor r/2\rfloor + 5}t^3]-F(\bar{a}+\bar{b},r)=\begin{cases}
            2+2(r+1)&a=b=3\\
            \binom{\bar{a}}{2} +\binom{{2}}{2}+r+1 &a>b=3\\ 
            \binom{\bar{a}}{2}+\binom{\bar{b}}{2}&a\geq b>3    
        \end{cases}
\end{equation}
Since the quantity ${a}+{b}$ is known, and we assume $a\geq b\geq 3$, we may determine that $a=b=3$ just from $a+b=6$. It remains to distinguish between the last two subcases for fixed $y:=\bar{a}+\bar{b}$. However, we can apply \Cref{lem:antiunimodal binomials} to observe that 
\[\binom{{y-2}}{2} +\binom{2}{2}+r+1>\binom{{y-2}}{2} +\binom{2}{2}>\binom{{y-\bar{b}}}{2}+\binom{\bar{b}}{2}\]
for  any $\bar{b}\geq 3$. Thus, by comparing the left-hand side of \eqref{eq:q_r_by_2_plus_5} to $\binom{a+b-4}{2}+1 = \binom{y - 2}{2} + \binom{2}{2}$, we can determine whether or not $b=3$. 

In case $b=3$, $a$ is immediately recoverable from $a+b$. Otherwise, the known value of $y$ and 
\[ \binom{\bar{a}}{2}+\binom{\bar{b}}{2}=\binom{y-\bar{b}}{2}+\binom{\bar{b}}{2}
\]
gives us a quadratic equation in $\bar{b}$ whose solutions will be $\bar{b}$ and $\bar{a}$.
\end{proof}

\section{Counting colorings for caterpillars with 3 internal vertices}\label{appendix:ps-qrt3}

\addtocontents{toc}{\protect\setcounter{tocdepth}{1}} 

The formulas used in Section~\ref{sec:3-cat} come from enumerating all possible proper colorings of a caterpillar of type $(a,b,c)$ and given weight using the three colors $\mb{0, 1, 2}$. Throughout this appendix, we will write $[q^rt^3]\psgn{(a,b,c)}{3}$ as just $[q^rt^3]$ for brevity. Recall $[q^rt^s]$ counts the number of proper colorings of the caterpillar $(a,b,c)$ of weight $r$ that use exactly $s$ colors.

Throughout the proofs in this appendix, when referring to the caterpillar $(a,b,c)$ we let $A, B, C$ denote the subset of leaves in the leaf components of order $a,b,c$, respectively. Thus, $|A| = a-1 \ge 1$, $|B| = b-1 \ge 0$, $|C| = c-1\ge 1$.
We will also refer to the colors assigned to the spine vertices in the caterpillar by $\lan \mathbf{k_a}, \mathbf{k_b}, \mathbf{k_c}\ran$ where $\mathbf{k_a}$ is the color assigned to the spine vertex attached to the leaves in $A$, etc. In general, there are $3\cdot 2+3!=12$ spine colorings:

\begin{multicols}{4}
    \begin{enumerate}
     \item $\mb{\lan 1,2,1\ran}$
        \item $\mb{\lan 2,1,2\ran}$
        
         \item $\mb{\lan 2,0,2\ran}$
           \item $\mb{\lan 0,2,0 \ran}$
                   \item $\mb{\lan 1,0,1\ran}$

        \item $\mb{\lan 0,1,0 \ran}$
       
        \item $\mb{\lan 1,0,2\ran}$
         \item $\mb{\lan 2,0,1\ran}$
 \item $\mb{\lan 1,2,0\ran}$
          \item $\mb{\lan 2,1,0\ran}$
        \item $\mb{\lan 0,2,1\ran}$
        \item $\mb{\lan 0,1,2\ran}$
    \end{enumerate}
\end{multicols}

These 12 cases need to be addressed to compute $[q^rt^3].$ We will make extensive use of the Kronecker delta function $\delta_P$, which takes value 1 if proposition $P$ holds and 0 otherwise. Finally, recall that we define $\Bar{a}=a-1$, $\Bar{b}=b-1$ and $\Bar{c}=c-1$, and assume that $a \geq c\geq 2$ and $b\geq 1$ unless otherwise stated. 

\subsection{Proof of Lemma~\ref{lem:coeff-of-q3-t3}}\label{appendix:coeff-of-q3-t3}
Lemma~\ref{lem:coeff-of-q3-t3} states
  \[ [q^3t^3]=\begin{cases}
  2&\text{if $b=1$}\\
   0 &\text{if $b>1$.}
   \end{cases}\]

\begin{proof}[Proof of Lemma~\ref{lem:coeff-of-q3-t3}]
    In order to obtain a chromatic sum of $3$ while using exactly three colors, we must use the colors \textbf{1} and \textbf{2} exactly once. Thus, spine colorings $\mb{\lan 1,2,1\ran}$,  $\mb{\lan 2,1,2\ran}$, $\mb{\lan 2,0,2\ran}$ and $\mb{\lan 1,0,1\ran}$ are not possible.  We examine the contribution of the remaining possible spine colorings to the coefficient $[q^3t^3]$ below.

    \begin{itemize}
          \item $\mb{\lan 0,2,0 \ran}$ and $\mb{\lan 0,1,0 \ran}$ contribute 0. Since all vertices in $A\cup C$ cannot be colored \textbf{0} and $|A \cup C| \geq 2$, these spine colorings produce a chromatic sum strictly greater than 3.
       
        \item $\mb{\lan 0,2,1\ran}$ and $\mb{\lan 0,1,2\ran}$ contribute 0. Since all vertices in $A$ cannot be colored \textbf{0} and $|A| \geq 1$, this spine coloring produces a chromatic sum strictly greater than 3.
        \item $\mb{\lan 2,1,0\ran}$ and $\mb{\lan 1,2,0\ran}$ contribute 0 for a similar reason.
         \item $\mb{\lan 2,0,1\ran}$ and $\mb{\lan 1,0,2\ran}$ each contribute $\delta_{b=1}$. All leaves in $B$ must be colored \textbf{1} or \textbf{2}, and so these spine colorings only produce a chromatic sum of 3 if $B = \varnothing$, in which case $b=1$.
    \end{itemize}
      Thus, in general we have $[q^3t^3]=2\delta_{b=1}$, and so the claim follows. 
\end{proof}

\subsection{Proof of Lemma~\ref{cor:coeff-t3-q4-list}}\label{appendix:coeff-of-q4-t3}
Lemma~\ref{cor:coeff-t3-q4-list} states that if $[q^3t^3] = 0$ (meaning $b>1$), then 
     \begin{align*}
         [q^4t^3] = \begin{cases}
             11 & \text{ if } a = b = c = 2\\
             8 & \text{ if } a = c =  2, b \geq 3\\
             6 & \text{ if } b = c = 2, a \geq 3\\
             4 & \text{ if } b = 2, a \geq 3, c \geq 3\\
             3 & \text{ if } c = 2, a \geq 3, b \geq 3\\
             1 & \text{ if } a, b, c \geq 3.
         \end{cases}
     \end{align*}

\begin{proof}[Proof of Lemma~\ref{cor:coeff-t3-q4-list}]

It suffices to prove the general formula
\[
 [q^4t^3] = 1 + 2\delta_{c = 2} + 3\delta_{b = 2} + 5 \delta_{a = 2}\delta_{c = 2}.
 \]

    In order to obtain chromatic sum $4$ while using exactly three colors, we must use \textbf{2} exactly once and \textbf{1} exactly twice. Thus, spine colorings $\mb{\lan 2,1,2\ran}$ and $\mb{\lan 2,0,2\ran}$ are not possible.  We examine the contribution of the remaining possible spine colorings to the coefficient $[q^4t^3]$ below. Note that by assumption,  $|B| \geq 1$. 
    \begin{itemize}
        \item $\mb{\lan 1,2,1\ran}$ contributes $1$. There is only one way to color the leaves to contribute to this coefficient since all leaves must be colored \textbf{0}.
        
        \item $\mb{\lan 0,2,0\ran}$ contributes $\delta_{a = 2}\delta_{c = 2}$. Two leaves must be colored \textbf{1} and the other leaves must be colored \textbf{0}. Since $|A|, |C| \geq 1$, the only way such a coloring arises is if $|A| = |C| = 1$, and the two leaves in $A \cup C$ are colored \textbf{1}.
        
        \item $\mb{\lan 1,0,1\ran}$ contributes $\delta_{b = 2}$. One leaf must be colored \textbf{2} and the other leaves must be colored \textbf{0}. Such a coloring is possible only if $|B| = 1$ and the unique leaf in $B$ is colored \textbf{2}. 

        \item $\mb{\lan 0,1,0\ran}$ contributes $2 \delta_{a= 2}\delta_{c = 2}$. We can only have two leaves not colored \textbf{0}. Since $|A \cup C| \geq 2$, the only way such a coloring arises is if $|A \cup C| = 2$. The coefficient of $2$ comes from the two ways to assign the leaves in $A\cup C$ the colors \textbf{1}, \textbf{2}.
    \end{itemize}
    The remaining spine colorings use three distinct colors, which means exactly one leaf must be colored \textbf{1} and the other leaves must be colored \textbf{0}. Such a coloring is possible only if there is a unique leaf in the leaf component containing the spine vertex colored \textbf{0}. Therefore, $\mb{\lan 1,0,2\ran}$ and $\mb{\lan 2,0,1\ran}$ each contribute $\delta_{b = 2}$, $\mb{\lan 1,2,0\ran}$ and $\mb{\lan 2,1,0\ran}$ each contribute $\delta_{c = 2}$, and $\mb{\lan 0,2,1\ran}$ and  $\mb{\lan 0,1,2\ran}$ each contribute $\delta_{a = 2}\delta_{c = 2}$. 
\end{proof}

\subsection{Proof of Proposition~\ref{prop:unified_q5t3}}\label{appendix:coeff-of-q5-t3} Proposition~\ref{prop:unified_q5t3} states
\begin{align*}
    [q^5t^3] = (\bar{b}+1) &+ \left( 2(\bar{a}+\bar{c}) + \binom{\bar{a}}{2} + \binom{\bar{c}}{2}\right)\delta_{b=1} + (2\bar{a}+2\bar{c}+3)\delta_{b=2} + 4\delta_{b=3} \\
    &+3(\bar{b}+2)\delta_{a=2} + (\bar{a}+\bar{b}+2)\delta_{c=2} + 4\delta_{a=3}\delta_{c=2} + 2\delta_{a=3} + 2\delta_{c=3}.
\end{align*}

\begin{proof}[Proof of Proposition~\ref{prop:unified_q5t3}] 
In order to obtain a weight $5$ proper coloring while using exactly three colors, we can have $5 = \mb{2} + \mb{2} + \mb{1}$ or $5 = \mb{2} + \mb{1} + \mb{1} + \mb{1}$. We examine the
contribution of each possible spine coloring to the coefficient $[q^5t^3]$ below.

\begin{itemize}
    \item $\mathbf{\langle 1,2,1 \rangle}$ contributes $\bar{b}$. We must create a $\mb{2} + \mb{1} + \mb{1} + \mb{1}$ coloring. We choose one leaf from $B$ to color \textbf{1}.

    \item $\mathbf{\langle 2,1,2 \rangle}$ contributes 1. We must create a $\mb{2} + \mb{2} + \mb{1}$ coloring. All of the leaves must be colored \textbf{0}. 

    \item $\mathbf{\langle 2,0,2 \rangle}$ contributes $(\bar{a}+\bar{c})\delta_{b=1} + \delta_{b=2}$. We must create a $\mb{2} + \mb{2} + \mb{1}$ coloring, and no leaf in $B$ can be colored \textbf{0}. If $B$ is empty, then we select one leaf from $A \cup C$ to color \textbf{1}, giving $(\bar{a}+\bar{c})\delta_{b=1}$. If $|B|=1$, then we must have the leaf in $B$ colored \textbf{1}, giving $\delta_{b=2}$.

    \item $\mathbf{\langle 0, 2, 0\rangle}$ contributes $\bar{b}\delta_{a=2} + \delta_{a=3}\delta_{c=2} + 2\delta_{a=2}$. To create a $\mb{2} + \mb{1} + \mb{1} + \mb{1}$ coloring with no leaves in $A\cup C$ colored \textbf{0}, all of the leaves in $A\cup C$ must be colored \textbf{1}. If $|A| = |C| = 1$, then we select a leaf from $B$ to color \textbf{1} as well, giving $\bar{b}\delta_{a=2}$. If $|A| =2$ and $|C| = 1$, then the three leaves in $A\cup C$ are the only leaves colored \textbf{1}, giving $\delta_{a=3}\delta_{c=2}$. To create a $\mb{2} + \mb{2} + \mb{1}$ coloring with no leaves in $A \cup C$ colored \textbf{0}, we must have $|A| = |C| = 1$. Hence there are two choices for which leaf in $A\cup C$ is colored \textbf{2}, giving $2\delta_{a=2}$.

    \item $\mathbf{\langle 1, 0, 1\rangle}$ contributes $(\bar{a}+\bar{c})\delta_{b=2} + 2\delta_{b=3}$. We must create a $\mb{2} + \mb{1} + \mb{1} + \mb{1}$ coloring, and no leaf in $B$ can be colored \textbf{0}. If $B$ is empty, this coloring is impossible. If $|B| = 1$, then the leaf in $B$ must be colored \textbf{1} and we can select one leaf from $A\cup C$ to color \textbf{2}, giving $(\bar{a}+\bar{c})\delta_{b=2}$. If $|B| = 2$, then we have two choices for which leaf is colored \textbf{2}, giving $2\delta_{b=3}$.

    \item $\mathbf{\langle 0, 1, 0\rangle}$ contributes $\bar{b}\delta_{a=2} + 3\delta_{a=3}\delta_{c=2} + \delta_{a=2}$. To create a $\mb{2} + \mb{1} + \mb{1} + \mb{1}$ coloring with no leaves in $A\cup C$ colored \textbf{0}, we can either color the unique leaves in $A$ and in $C$ with \textbf{1} and select a leaf in $B$ to color \textbf{2} (giving $\bar{b}\delta_{a=2}$), or we select one of the three leaves in $A\cup C$ to color \textbf{2} (giving $3\delta_{a=3}\delta_{c=2}$). To create a $\mb{2} + \mb{2} + \mb{1}$ coloring with no leaves in $A\cup C$ colored \textbf{0}, we must color the unique leaves in $A$ and in $C$ with \textbf{2}, giving $\delta_{a=2}$. 
    \end{itemize}

     The remaining spine colorings use three distinct colors, which means to complete a $\mb{2} + \mb{1} + \mb{1} + \mb{1}$ coloring we need to select two vertices to color \textbf{1}. To complete a $\mb{2} + \mb{2} + \mb{1}$ coloring we need to select one vertex to color \textbf{2}. This creates symmetry in the following arguments, with the few differences coming from the assumption that $a\geq c \geq 2$ whereas $b\geq 1$. 
\begin{itemize}
    \item $\mathbf{\langle 1, 0, 2\rangle}$ contributes $\left(\binom{\bar{c}}{2} + \bar{a} \right)\delta_{b=1} + (\bar{c}+1)\delta_{b=2} + \delta_{b=3}$. To create a $\mb{2} + \mb{1} + \mb{1} + \mb{1}$ coloring with no leaves in $B$ colored \textbf{0}, we must have all leaves in $B$ colored \textbf{1} and select the appropriate number of leaves from $C$ to color \textbf{1}, giving $\binom{\bar{c}}{2}\delta_{b=1} + \bar{c}\delta_{b=2} + \delta_{b=3}$. To create a $\mb{2} + \mb{2} + \mb{1}$ coloring with no leaves in $B$ colored \textbf{0}, we either select one leaf from $A$ to color \textbf{2} if $B$ is empty, or the unique leaf in $B$ must be colored \textbf{2}, giving $\bar{a}\delta_{b=1} + \delta_{b=2}$.

    \item $\mathbf{\langle 2, 0, 1\rangle}$ contributes $\left(\binom{\bar{a}}{2} + \bar{c} \right)\delta_{b=1} + (\bar{a}+1)\delta_{b=2} + \delta_{b=3}$, following a similar reason as $\mathbf{\langle 1, 0, 2\rangle}$ by permuting $a$ and $c$.

    \item $\mathbf{\langle 1, 2, 0\rangle}$ contributes $(\bar  {b} + 1)\delta_{c=2} + \delta_{c=3}$. To create a $\mb{2} + \mb{1} + \mb{1} + \mb{1}$ coloring with no leaves in $C$ colored \textbf{0}, we must have all leaves in $C$ colored \textbf{1} and select the appropriate number of leaves from $B$ to color \textbf{1}, giving $\bar{b}\delta_{c=2} + \delta_{c=3}$. To create a $\mb{2} + \mb{2} + \mb{1}$ coloring with no leaves in $C$ colored \textbf{0}, we must have the unique leaf in $C$ colored \textbf{2}, giving $\delta_{c=2}$. 

    \item $\mathbf{\langle 2, 1, 0\rangle}$ contributes $(\bar{a} + 1)\delta_{c=2} + \delta_{c=3}$. To create a $\mb{2} + \mb{1} + \mb{1} + \mb{1}$ coloring with no leaves in $C$ colored \textbf{0}, we must have all leaves in $C$ colored \textbf{1} and select the appropriate number of leaves from $A$ to color \textbf{1}, giving $\bar{a}\delta_{c=2} + \delta_{c=3}$. To create a $\mb{2} + \mb{2} + \mb{1}$ coloring with no leaves in $C$ colored \textbf{0}, we must have the unique leaf in $C$ colored \textbf{2}, giving $\delta_{c=2}$. 

    \item $\mathbf{\langle 0, 2, 1\rangle}$ contributes $(\bar{b}+1)\delta_{a=2} + \delta_{a=3}$, following a similar reason as $\mathbf{\langle 1, 2, 0\rangle}$ by permuting $a$ and $c$.

    \item $\mathbf{\langle 0, 1, 2\rangle}$ contributes $2\delta_{a=2} + \delta_{a=3}$, following a similar reason as $\mathbf{\langle 2, 1, 0\rangle}$ by permuting $a$ and $c$.
\end{itemize}
    
\end{proof}

\subsection{Proof of Proposition~\ref{prop:coeff-of-q6t3}} \label{appendix:coeff-of-q6-t3} 
Proposition~\ref{prop:coeff-of-q6t3} states that if $b \geq 2$, then 
\begin{align*}
      [q^6t^3] =  &\left({\bar{b} \choose 2} + 2(\bar{a}+\bar{c})\right) + 
        \left(4(\bar{a}+\bar{c}) + {\bar{a} \choose 2} + {\bar{c} \choose 2}\right)\delta_{b = 2}\\ + 
        &\left(2(\bar{a}+ \bar{c}) + 6\right)\delta_{b = 3} +
         5\delta_{b = 4} + \left(2{\bar{b} \choose 2} + 6\bar{b} +2\right)\delta_{a = 2}\delta_{c = 2} +\left(3\bar{b} + 11\right)\delta_{a = 3}\delta_{c = 2}\\
         + &\left(\bar{b} + 11\right)\delta_{a = 3}\delta_{c = 3}  
        + 5\delta_{a = 4}\delta_{c = 2} + 
        2\delta_{a = 4}+ 
        \left(2(\bar{a}+\bar{b}) + {\bar{a} \choose 2} + {\bar{b} \choose 2}\right)\delta_{c = 2} \\
        &+ (\bar{a}+\bar{b} + 4)\delta_{c = 3} + 2 \delta_{c = 4}.
    \end{align*}

    \begin{proof}[Proof of Proposition~\ref{prop:coeff-of-q6t3}]
    In order to obtain a weight $6$ proper coloring while using exactly three colors, we can have $6 = \mb{2} + \mb{2} + \mb{1}+ \mb{1}$ or $6 = \mb{2} + \mb{1} + \mb{1}+ \mb{1}+\mb{1}$. We examine the
contribution of each possible spine coloring to the coefficient $[q^6t^3]$ below.

    \begin{itemize}
        \item $\mb{\lan 1,2,1\ran}$ contributes $\binom{\bar{b}}{2}+{\bar{a}+\bar{c}}$. To create a $\mb{2} + \mb{2} + \mb{1}+ \mb{1}$ coloring, we choose one leaf from $A \cup C$  to color \textbf{2}. To create a $\mb{2} + \mb{1} + \mb{1} + \mb{1}+ \mb{1}$ coloring, we can choose two leaves from $B$ to color \textbf{1}.

        \item $\mb{\lan 2,1,2\ran}$ contributes $\bar{a}+\bar{c}$. We must create a $\mb{2} + \mb{2} + \mb{1}+ \mb{1}$ coloring. We choose one leaf from $A \cup C$ to color \textbf{1}.

        \item $\mb{\lan 2,0,2\ran}$ contributes $(\bar{a}+\bar{c})\delta_{b=2}+\delta_{b=3}$. We must create a $\mb{2} + \mb{2} + \mb{1}+ \mb{1}$ coloring, and have no leaf in $B$ colored \textbf{0}. If $|B| = 1$, then we color the unique leaf in $B$ with \textbf{1} and select a leaf in $A \cup C$ to color \textbf{1}, giving $(\bar{a}+\bar{c})\delta_{b=2}$. If $|B| = 2$, we color the two leaves in $B$ with \textbf{1}, giving $\delta_{b=3}$. 
        
        \item $\mb{\lan 0,2,0 \ran}$ contributes $(\binom{\bar{b}}{2}+2\bar{b})\delta_{a=2}+(\bar{b}+3)\delta_{a=3}\delta_{c=2}+\delta_{a=3}\delta_{c=3}+\delta_{a=4}\delta_{c=2}$. 

        To create a $\mb{2} + \mb{2} + \mb{1}+ \mb{1}$ coloring with no leaf in $A \cup C$ colored \textbf{0}, we must have $2 \leq|A \cup C| \leq 3$. If $|A \cup C| = 2$, then we choose one leaf in $A\cup C$ to color \textbf{2}, we color the remaining leaf in $A \cup C$ with \textbf{1}, and we choose one leaf in $B$ to color \textbf{1}, giving $2\bar{b}\delta_{a=2}$. If $|A| = 2$ and $|C|=1$, then we choose one leaf in $A \cup C$ to color \textbf{2} and color the remaining leaves in $A \cup C$ with \textbf{1}, giving $3\delta_{a=3}\delta_{c=2}$.

        To create a $\mb{2} + \mb{1} + \mb{1} + \mb{1}+ \mb{1}$ coloring with all of the leaves in $A\cup C$ colored \textbf{1}, we must have $2 \leq |A \cup C | \leq 4$. If $|A \cup C| = 2$, then we choose $2$ leaves in $B$ to color \textbf{1}, giving $\binom{\bar{b}}{2}\delta_{a=2}$. If $|A \cup C| = 3$, then we choose one leaf in $B$ to color \textbf{1}, giving $\bar{b}\delta_{a=3}\delta_{c=2}$. If $|A \cup C| = 4$, then we color all leaves in $B$ with \textbf{0}, giving $\delta_{a=4}\delta_{c=2} + \delta_{a=3}\delta_{c=3}$.
        
       \item $\mb{\lan 1,0,1\ran}$ contributes $(\bar{a}+\bar{c})\delta_{b=2}+(\bar{a}+\bar{c}+1)\delta_{b=3}+3\delta_{b=4}$.

       To create a $\mb{2} + \mb{2} + \mb{1}+ \mb{1}$ coloring with all of the leaves in $B$ colored \textbf{2}, we must have $1 \leq |B| \leq 2$. If $|B| = 1$, we choose one leaf from $A\cup C$ to color $2$, giving $(\bar{a}+\bar{c})\delta_{b=2}$. If $|B| = 2$, then we color all of the leaves in $B$ with \textbf{2}, giving $\delta_{b=3}$. 

       To create a $\mb{2} + \mb{1} + \mb{1} + \mb{1}+ \mb{1}$ coloring with no leaf in $B$ colored \textbf{0} and no leaf in $A \cup C$ colored \textbf{1}, we can either select a leaf in $A \cup C$ to color \textbf{2} (giving $(\bar{a}+\bar{c})\delta_{b=3}$) or we select a leaf in $B$ to color \textbf{2} (giving $3\delta_{b=4}$).
    
        \item $\mb{\lan 0,1,0 \ran}$ contributes $2\bar{b}\delta_{a=2}+(\bar{ b}+3)\delta_{a=3}\delta_{c=2}+4\delta_{a=3}\delta_{c=3}+4\delta_{a=4}\delta_{c=2}$. 

        To create a $\mb{2} + \mb{2} + \mb{1}+ \mb{1}$ coloring with no leaves in $A \cup C$ colored \textbf{0}, we must have $2 \leq |A\cup C|\leq 3$. If $|A \cup C| = 2$, then we select one leaf in $B$ to color \textbf{2} and one leaf in $A \cup C$ to color \textbf{2}, giving $2\bar{b}\delta_{a=2}$. If $|A \cup C| = 3$, then we choose one leaf in $A \cup C$ to color \textbf{1}, giving $3\delta_{a=3}\delta_{c=2}$. 

        To create a $\mb{2} + \mb{1} + \mb{1} + \mb{1}+ \mb{1}$ coloring with no leaves in $A \cup C$ colored \textbf{0} and no leaves in $B$ colored \textbf{1}, we must have $3 \leq |A \cup C| \leq 4$. If $|A\cup C| = 3$, then we select one leaf from $B$ to color \textbf{2}, giving $\bar{b}\delta_{a=3}\delta_{c=2}$. If $|A\cup C| = 4$, then we choose one leaf in $A \cup C$ to color \textbf{2}, giving $4\delta_{a=4}\delta_{c=2} + 4\delta_{a=3}\delta_{c=3}$.
\end{itemize}

        The remaining spine colorings use three distinct colors. This means that one of the sets of leaves ($A$, $B$, or $C$) cannot have any leaves colored \textbf{0}, which restricts the number of leaves in that set. The arguments for the following contributions are analogous to the arguments made in the first case $\mb{\lan 1,0,2\ran}$.
\begin{itemize}
   
        \item $\mb{\lan 1,0,2\ran}$ contributes $(\bar{a}+\bar{c}+\binom{\bar{c}}{2})\delta_{b=2}+(\bar{c}+2)\delta_{b=3}+\delta_{b=4}$. 

        To create a $\mb{2} + \mb{2} + \mb{1}+ \mb{1}$ coloring with no leaves in $B$ colored \textbf{0}, we must have $1 \leq |B| \leq 2$. If $|B| = 1$, then we can color the leaf in $B$ with \textbf{1} (or \textbf{2}). Then we must choose a leaf in $A$ to color \textbf{2} (or choose a leaf in $C$ to color \textbf{1}), giving $\bar{a}\delta_{b=2} + \bar{c}\delta_{b=2}$. The case when $|B| = 2$ gives $2\delta_{b=3}$.
        
        To create a $\mb{2} + \mb{1} + \mb{1} + \mb{1}+ \mb{1}$ coloring with no leaves in $B$ colored \textbf{0} and no leaves in $A$ colored \textbf{1}, we must color all of the leaves in $B$ with \textbf{1} and select the appropriate number of leaves in $C$ to color \textbf{1}, giving $\binom{\bar{c}}{2}\delta_{b=2} + \bar{c}\delta_{b=3} + \delta_{b=4}$.

        \item $\mb{\lan 2,0,1\ran}$ contributes $(\bar{a}+\bar{c}+\binom{\bar{a}}{2})\delta_{b=2}+(\bar{a}+2)\delta_{b=3}+\delta_{b=4}$. Identical to the $\mb{\lan 1,0,2 \ran}$ case after permuting $a$ and $c$.

        \item $\mb{\lan 1,2,0\ran}$ contributes $(\bar{a}+\bar{b}+\binom{\bar{b}}{2})\delta_{c=2}+(\bar{b}+2)\delta_{c=3}+\delta_{c=4}$.        Identical to the $\mb{\lan 1,0,2\ran}$ case after permuting $b$ and $c$.

        \item $\mb{\lan 2,1,0\ran}$ contributes $(\bar{a}+\bar{b}+\binom{\bar{a}}{2})\delta_{c=2}+(\bar{a}+2)\delta_{c=3}+\delta_{c=4}$. Identical to the $\mb{\lan 1,2,0\ran}$ case after permuting $a$ and $b$.

        \item $\mb{\lan 0,2,1\ran}$ contributes $(\bar{b}+1+\binom{\bar{b}}{2})\delta_{a=2}+(\bar{b}+2)\delta_{a=3}+\delta_{a=4}$.         Identical to the $\mb{\lan 1,2,0 \ran}$ case after permuting $a$ and $c$ and recalling that $a=2$ implies $c=2$.

           \item $\mb{\lan 0,1,2\ran}$ contributes $(\bar{b}+1)\delta_{a=2}+(\bar{c}+2)\delta_{a=3}+\delta_{a=4}$.           Identical to the $\mb{\lan 1, 0, 2 \ran}$ case after permuting $a$ and $b$ and recalling that $a=2$ implies $c=2$.

\end{itemize}
  Adding the contributions gives the claimed formula.
 \end{proof}

    \subsection{Proof of Proposition~\ref{prop:q_m_plus_2}} \label{proof.q_m_plus_2}
    The proof of Proposition~\ref{prop:q_m_plus_2} requires the coefficient $ [q^{m+2}t^3]$ when $a,b,c \geq m$ for some $m \geq 3$. We prove the formula 
    \begin{align*}
       [q^{m+2}t^3] & =   2\delta_{a=m} + 2\delta_{c=m} + (\bar{b}+2)\delta_{b=m} \\
       & \quad + \sum_{j=0}^{\frac{m-2}{2}}\binom{\bar{a}+\bar{c}}{j}\binom{\bar{b}}{m-2-2j} + \sum_{j=0}^{\frac{m-3}{2}}\binom{\bar{b}}{j}\binom{\bar{a}+\bar{c}}{m-3-2j}.
    \end{align*}

\begin{proof}

We examine the contribution of each possible spine coloring to the coefficient $[q^{m+2}t^3]$ below. Note that $m-1 \leq |A|, |B|, |C|$ and that we must use exactly three colors in each coloring. 

\begin{itemize}
     \item $\mb{\lan 1,2,1\ran}$ contributes $\sum_{j=0}^{\frac{m-2}{2}}\binom{\bar{a}+\bar{c}}{j}\binom{\bar{b}}{m-2-2j}$. The sum of the colors on the leaves must be $m-2$. Pick $0 \leq j \leq \lfloor \frac{m-2}{2} \rfloor$ leaves in $A \cup C$ to color \textbf{2}, then pick $m-2-2j$ leaves in $B$ to color \textbf{1}. Furthermore, at least one leaf in $A\cup C$ is guaranteed to be colored \textbf{0} because $\lfloor \frac{m-2}{2} \rfloor < 2(m-1) \leq |A\cup C|$.

        \item $\mb{\lan 2,1,2\ran}$ contributes $\sum_{j=0}^{\frac{m-3}{2}}\binom{\bar{b}}{j}\binom{\bar{a}+\bar{c}}{m-3-2j}$. The sum of the colors on the leaves must be $m-3$. Pick $0 \leq j \leq \lfloor \frac{m-3}{2} \rfloor$ leaves in $B$ to color \textbf{2}, then pick $m-3-2j$ leaves in $A\cup C$ to color \textbf{1}. Furthermore, at least one leaf in $B$ is guaranteed to be colored \textbf{0} because $\lfloor \frac{m-3}{2} \rfloor < m-1 \leq |B|$.
        
         \item $\mb{\lan 2,0,2\ran}$ contributes 0. The sum of the colors on the leaves must be $m-2$ and none of the leaves in $B$ can be colored \textbf{0}. Since $B$ contains at least $m-1$ leaves, we cannot create a proper coloring. 

           \item $\mb{\lan 0,2,0 \ran}$ contributes 0. The sum of the colors on the leaves must be $m$ and none of the leaves in $A\cup C$ can be colored \textbf{0}. Since $A\cup C$ contains at least $2(m-1)$ leaves and $m \geq 3$, we cannot create a proper coloring.

                   \item $\mb{\lan 1,0,1\ran}$ contributes $\bar{b}\delta_{b=m}$. The sum of the colors on the leaves must be $m$ and none of the leaves in $B$ can be colored \textbf{0}. Thus, $m-1 \leq |B| \leq m$. If $|B| = m-1$, we select one leaf in $B$ to color \textbf{2} and color the remaining leaves in $B$ with \textbf{1}, giving $\bar{b}\delta_{b=m}$. If $|B| = m$, then we need to color all leaves in $B$ with \textbf{1}, which produces a coloring with only two colors, a contradiction.  

        \item $\mb{\lan 0,1,0 \ran}$ contributes $0$. The sum of the colors on the leaves must be $m+1$ and none of the leaves in $A \cup C$ can be colored \textbf{0}. Thus, $2(m-1) \leq |A \cup C| \leq m+1$. Since $m \geq 3$, the only option is when $m=3$ and $|A \cup C| = 4$. However, if $|A \cup C| = 4$, then we need to color all of the leaves in $A \cup C$ with \textbf{1}. This produces a coloring with only two colors, a contradiction.

    \end{itemize}

    The spine colorings with three unique colors
    \[
    \mb{\lan 1,0,2\ran}, \mb{\lan 2,0,1\ran}, \mb{\lan 1,2,0\ran}, \mb{\lan 2,1,0\ran},\mb{\lan 0,2,1\ran}, \mb{\lan 0,1,2\ran}
    \]
each require that the sum of the colors on the leaves must be $m-1$ and that the leaves in some $D \in \{A, B, C\}$ cannot be colored \textbf{0}. Thus, $m-1 \leq |D| \leq m-1$, which means that all leaves in $D$ must be colored \textbf{1}, giving $\delta_{|D| = m-1}$. In total, these six cases contribute $2\delta_{a=m} + 2\delta_{b=m} + 2\delta_{c=m}$. Adding up the contributions from these twelve cases gives the claimed formula.
\end{proof}

 \subsection{Proof of Proposition~\ref{prop:q_m_plus_4}} \label{proof.q_m_plus_4}

    Proposition~\ref{prop:q_m_plus_4} examines the case where $b$ is the unique minimum value among $(a,b,c)$ and states that if $a \geq c > b \geq 3$, then 

    \[
    [q^{b+4}t^3] = G(\bar{a}+\bar{c},\bar{b})+\psi(a,c;b)+ \rho(a,c;b),
    \]
    where 

\begin{align}\label{eq:Gqm4_append}
    G(\bar{a}+\bar{c},\bar{b}) &= \binom{\bar{b}}{3} +\bar{b}\left(\bar{a} + \bar{c}\right)  + (\bar{a} + \bar{b}+\bar{c})
    +\sum_{j=1}^{\frac{\bar{b}+1}{2}}\binom{\bar{a}+\bar{c}}{j}\binom{\bar{b}}{\bar{b}+1-2j}+\sum_{j=0}^{\frac{\bar{b}}{2}}\binom{\bar{b}}{j}\binom{\bar{a}+\bar{c}}{\bar{b}-2j}
\end{align}

\begin{align}\label{eq:psiqm4_append}
\psi(a,c;b)=
   \begin{cases}
  8b-2  &    a = c = b + 1\\
  4b+2  &    a = b + 2, c = b + 1\\
  a+3b-2 &  a > b + 2, c = b + 1\\
   4    &   a = c = b + 2\\
   2    &   a > b + 2 = c\\
   0    &   a,c > b+2,
\end{cases}
\end{align}
and 
\begin{align}\label{eq:rhoqm4_append}
     \rho(a,c;b)= \bar{a}+\bar{c}+\binom{\bar{a}+\bar{b}}{2}+\binom{\bar{c}+\bar{b}}{2}.\end{align}

\begin{proof}[Proof of Proposition~\ref{prop:q_m_plus_4}]

We examine the contribution of each possible spine coloring to the coefficient $[q^{b+4}t^3]$ below. We must use exactly three colors in each coloring. 

\begin{itemize}
    \item $\mb{\langle 1,2,1\rangle}$ contributes  $\sum_{j=1}^{\frac{\bar{b}+1}{2}}\binom{\bar{a}+\bar{c}}{j}\binom{\bar{b}}{\bar{b}+1-2j}$ in $G(\bar{a}+\bar{c}, \bar{b})$. The sum of the colors on the leaves must be $b$. We first pick $0 \leq j \leq b/2$ leaves in $A \cup C$ to color \textbf{2}, and then pick $b-2j$ leaves in $B$ to color \textbf{1}. The remaining vertices are colored \textbf{0}, and in all of these colorings at least one vertex is colored \textbf{0}. Note that $\binom{\bar{b}}{b} = 0$, hence the summation may start at $j=1$.

    \item $\mb{\langle 2,1,2\rangle}$ contributes $\sum_{j=0}^{\frac{\bar{b}}{2}}\binom{\bar{b}}{j}\binom{\bar{a}+\bar{c}}{\bar{b}-2j}$ in $G(\bar{a}+\bar{c}, \bar{b})$. The sum of colors on the leaves must be $b-1$. We first pick $0 \leq j \leq \bar{b}/2$ leaves in $B$ to color \textbf{2}, and then pick $b-1 - 2j$ leaves in $A \cup C$ to color \textbf{1}. The remaining vertices are colored \textbf{0}, and in all of these colorings at least one vertex is colored \textbf{0}.

    \item $\mb{\langle 2,0,2\rangle}$ contributes $\bar{b} + \bar{a}+ \bar{c}$ in $G(\bar{a}+ \bar{c},\bar{b})$. The sum of the colors on the leaves must be $b$, and all leaves in $B$ must be colored either \textbf{1} or \textbf{2}. The leaves in $B$ can have at most one vertex colored \textbf{2}, since then the sum of colors on the leaves in $B$ would be $2\cdot 1+ 1 \cdot(b-2) = b$. This type of proper coloring can arise in $\bar{b}$ ways. If none of the leaves in $B$ are colored \textbf{2}, then the sum of colors on the leaves in $B$ is $b-1$ and we must pick a leaf in $A \cup C$ to color \textbf{1}. This type of proper coloring can arise in $\bar{a} + \bar{c}$ ways.

    \item $\mb{\langle 0,2,0\rangle}$ contributes 0. The sum of the colors on the leaves must be $b+2$, and all leaves in $A \cup C$ must be colored either \textbf{1} or \textbf{2}. Thus, $|A \cup C| \leq b+2$. However, since $b \leq |C| \leq |A|$, we must have $2b \leq |A \cup C| < b+3$, which is never satisfied for $b \geq 3$.

    \item $\mb{\langle 1,0,1\rangle}$ contributes $\bar{b}(\bar{a} + \bar{c}) + \binom{\bar{b}}{3}$ in $G(\bar{a}+ \bar{c},\bar{b})$. The sum of the colors on the leaves must be $b+2$, and all leaves in $B$ must be colored either \textbf{1} or \textbf{2}. The leaves in $B$ can have at most three vertices colored \textbf{2}, since then the sum of colors on the leaves in $B$ would be $2\cdot 3 + 1 \cdot(b-4) = b+2$. This type of proper coloring can arise in $\binom{\bar{b}}{3}$ ways. If there is one leaf in $B$ colored \textbf{2}, then the sum of colors on the leaves in $B$ would be $2 \cdot 1 + 1\cdot (b-2) = b$. Then there are $\bar{a} + \bar{c}$ ways to pick a leaf in $A \cup C$ to color \textbf{2} and $\bar{b}$ ways to pick a leaf in $B$ to color \textbf{2}, giving $\bar{b}(\bar{a} + \bar{c}$). 
    However, if there are two leaves or zero leaves in $B$ colored \textbf{2}, we would need the colors on the leaves in $A \cup C$ to sum to an odd number, which is impossible since all the leaves in $A \cup C$ must be colored either \textbf{2} or \textbf{0}. 
    
    \item $\mb{\langle 0,1,0\rangle}$ 
    contributes 0. The sum of the colors on the leaves must be $b+3$, and all leaves in $A \cup C$ must be colored either \textbf{1} or \textbf{2}. Thus, $|A \cup C| < b+3$ since for a proper coloring using exactly three colors, we must have at least one leaf in $A \cup C$ colored \textbf{2}. However, since $b \leq |C| \leq |A|$, we must have $ 2b \leq |A \cup C| < b+3$, which is never satisfied for $b \geq 3$. 

\end{itemize}
The remaining six spine colorings contribute either to $\rho(a,c;b)$ or $\psi(a,c;b)$. In the following cases, the sum of the colors on the leaves must be $b+1$. 
\begin{itemize}
    
    \item $\mb{\langle 1,0,2\rangle}$ contributes $\bar{a} + \binom{\bar{b} + \bar{c}}{2}$ in $\rho(a,c; b)$.
All of the leaves in $B$ must be colored \textbf{1} or \textbf{2}. If all of the leaves in $B$ are colored \textbf{1}, then we must either pick one leaf from $A$ to color \textbf{2} or pick two leaves from $C$ to color \textbf{1}, giving $\bar{a} + \binom{\bar{c}}{2}$. If one leaf in $B$ is colored \textbf{2} and the remaining $b-2$ leaves in $B$ are colored \textbf{1}, then we must pick one leaf from $C$ to color \textbf{1}, giving $\bar{b}\bar{c}$. If two leaves in $B$ are colored \textbf{2} and the remaining $b-3$ leaves in $B$ are colored \textbf{1}, then we must have all leaves in $A \cup C$ colored \textbf{0}, giving $\binom{\bar{b}}{2}$. Note that $\binom{\bar{c}}{2} + \bar{b}\bar{c} + \binom{\bar{b}}{2} = \binom{\bar{c} + \bar{b}}{2}$.

    \item $\mb{\langle 2,0,1\rangle}$ contributes $\bar{c} + \binom{\bar{a} + \bar{b}}{2}$ in $\rho(a,c; b)$. The arguments are identical to the $\mb{\langle 1,0,2\rangle}$ case after permuting $a$ and $c$.

    \item $\mb{\langle 1,2,0\rangle}$ contributes $(\bar{b}+\bar{c})\delta_{c=b+1} + \delta_{c=b+2}$ in $\psi(a,c;b)$. Since all leaves in $C$ must be colored \textbf{1} or \textbf{2} and $b \leq |C|$, we must have $b \leq |C| \leq b+1$. If $|C| = b$ and all leaves in $C$ are colored \textbf{1}, then we must pick one leaf from $B$ to color \textbf{2}, giving $\bar{b}$. If $|C| = b$ and one leaf in $C$ is colored \textbf{2} and the remaining $b-1$ leaves are colored \textbf{1}, then all leaves in $A \cup B$ must be colored \textbf{0}, giving $\bar{c}$. If $|C| = b+1$, then all leaves in $C$ must be colored \textbf{1}, giving $1$.

    \item $\mb{\langle 2,1,0\rangle}$ contributes $(\bar{a}+\bar{c})\delta_{c=b+1} + \delta_{c=b+2}$ in $\psi(a,c;b)$. The arguments are identical to the $\mb{\langle 1,2,0\rangle}$  case after permuting $a$ and $b$.

    \item $\mb{\langle 0,2,1\rangle}$ contributes $(\bar{b}+\bar{a})\delta_{a=b+1} + \delta_{a=b+2}$ in $\psi(a,c;b)$. The arguments are identical to the $\mb{\langle 1,2,0\rangle}$  case after permuting $a$ and $c$. 
    
    \item $\mb{\langle 0,1,2\rangle}$ contributes $(\bar{c}+\bar{a})\delta_{a=b+1} + \delta_{a=b+2}$ in $\psi(a,c;b)$. The arguments are identical to the $\mb{\langle 2,1,0\rangle}$ case after permuting $a$ and $c$. 
\end{itemize}

These last four spine colorings give 
\[
\psi(a,c;b) = (2\bar{a} + \bar{b} + \bar{c})\delta_{a = b+1} + (\bar{a} + \bar{b} + 2\bar{c})\delta_{c = b+1} + 2\delta_{a=b+2} + 2 \delta_{c=b+2},
\]
which is equivalent to the six cases presented in Equation~\eqref{eq:psiqm4_append}.

\end{proof}

\bibliographystyle{plain}
\bibliography{q_refs.bib}

\end{document}